\begin{document}
\newcommand {\emptycomment}[1]{} 

\newcommand{\tabincell}[2]{\begin{tabular}{@{}#1@{}}#2\end{tabular}}

\baselineskip=14pt
\newcommand{\nc}{\newcommand}
\newcommand{\delete}[1]{}
\nc{\mfootnote}[1]{\footnote{#1}} 
\nc{\todo}[1]{\tred{To do:} #1}

\delete{
\nc{\mlabel}[1]{\label{#1}}  
\nc{\mcite}[1]{\cite{#1}}  
\nc{\mref}[1]{\ref{#1}}  
\nc{\meqref}[1]{\eqref{#1}} 
\nc{\bibitem}[1]{\bibitem{#1}} 
}

\nc{\mlabel}[1]{\label{#1}  
{\hfill \hspace{1cm}{\bf{{\ }\hfill(#1)}}}}
\nc{\mcite}[1]{\cite{#1}{{\bf{{\ }(#1)}}}}  
\nc{\mref}[1]{\ref{#1}{{\bf{{\ }(#1)}}}}  
\nc{\meqref}[1]{\eqref{#1}{{\bf{{\ }(#1)}}}} 
\nc{\mbibitem}[1]{\bibitem[\bf #1]{#1}} 

\newtheorem{thm}{Theorem}[section]
\newtheorem{lem}[thm]{Lemma}
\newtheorem{cor}[thm]{Corollary}
\newtheorem{pro}[thm]{Proposition}
\newtheorem{conj}[thm]{Conjecture}
\theoremstyle{definition}
\newtheorem{defi}[thm]{Definition}
\newtheorem{ex}[thm]{Example}
\newtheorem{rmk}[thm]{Remark}
\newtheorem{pdef}[thm]{Proposition-Definition}
\newtheorem{condition}[thm]{Condition}

\newcommand{\C}{\mathbb C}
\newcommand{\Q}{\mathbb{Q}}
\newcommand{\R}{\mathbb{R}}
\newcommand{\M}{\mathbb{M}}
\newcommand{\N}{\mathbb{N}}
\newcommand{\Z}{\mathbb{Z}}



\title[Anti-dendriform algebras]
{Anti-dendriform algebras, new splitting of operations and Novikov
type algebras}

\author{Dongfang Gao}
\address{Chern Institute of Mathematics \& LPMC, Nankai University, Tianjin 300071, China}
\email{gaodfw@mail.ustc.edu.cn}

\author{Guilai Liu}
\address{Chern Institute of Mathematics \& LPMC, Nankai University, Tianjin 300071, China}
\email{1120190007@mail.nankai.edu.cn}

\author{Chengming Bai}
\address{Chern Institute of Mathematics \& LPMC, Nankai University, Tianjin 300071, China }
\email{baicm@nankai.edu.cn}


\begin{abstract}
We introduce the notion of anti-dendriform algebras as a new
approach of splitting the associativity. They are characterized as
the algebras with two operations whose sum is associative and the
negative left and right multiplication operators compose the
bimodules of the sum associative algebras, justifying the notion
due to the comparison with the corresponding characterization of
dendriform algebras. The notions of anti-$\mathcal O$-operators
and anti-Rota-Baxter operators on associative algebras are
introduced to interpret anti-dendriform algebras. In particular,
there are compatible anti-dendriform algebra structures on
associative algebras with nondegenerate commutative Connes
cocycles. There is an important observation that there are
correspondences between certain subclasses of dendriform and
anti-dendriform algebras in terms of $q$-algebras. As a direct
consequence, we give the notion of Novikov-type dendriform
algebras as an analogue of Novikov algebras for dendriform
algebras, whose relationship with Novikov algebras is consistent
with the one between dendriform and pre-Lie algebras. Finally we
extend to provide a general framework of introducing the notions
of analogues of anti-dendriform algebras, which interprets a new
splitting of operations.

\end{abstract}


\subjclass[2010]{
17A36,  
17A40,  
17B10, 
17B40, 
17B60, 
17B63,  
17D25.  
}

\keywords{associative algebra; dendriform algebra; anti-dendriform
algebra; commutative Connes cocycle; Novikov algebra}

\maketitle


\tableofcontents

\allowdisplaybreaks

\section{Introduction}

The aim of this paper is to introduce the notion of
anti-dendriform algebras illustrating a new splitting of
operations, and study the relationships between them and the
related structures such as anti-$\mathcal O$-operators,
commutative Connes cocycles on associative algebras, dendriform
algebras and Novikov algebras.

Recall that a dendriform algebra is a vector space $A$ with two
bilinear operations $\succ,\prec$ satisfying
\begin{align}\label{dendriform algebra}
x\succ(y\succ z)=(x\cdot y)\succ z,\ \ (x\prec y)\prec z=x\prec(y\cdot z),\ \ (x\succ y)\prec z=x\succ (y\prec z),
\end{align}
where
\begin{align}\label{dendriform algebra'}
x\cdot y=x\succ y+x\prec y,
\end{align}
 for all $x,y,z\in A$. The
notion of dendriform algebras was introduced by Loday in the study
of algebraic K-theory (\cite{Lo}). They appear in a lot of fields
in mathematics and physics, such as arithmetic (\cite {Lo1}),
combinatorics (\cite{LR1}), Hopf algebras (\cite{Cha, Ho, Ho1,
LR2,Ro}), homology (\cite{Fra, Fra1}), operads (\cite{Lo2}), Lie
and Leibniz algebras (\cite{Fra1}) and quantum field theory
(\cite{Fo}). The fact that the sum of the two operations in a
dendriform algebra $(A,\succ,\prec)$ gives an associative algebra
$(A,\cdot)$ expresses a kind of ``splitting the associativity".
 Moreover, dendriform algebras are closely related to
pre-Lie algebras which are a class of Lie-admissible algebras
whose commutators are Lie algebras, also appearing in many fields in
mathematics and physics (\cite{Bai2,Bu} and the references
therein), in the sense that for a dendriform algebra
$(A,\succ,\prec)$, the bilinear operation
\begin{equation}\label{eq:pre}
x\ast  y=x\succ y-y\prec x,\;\;\forall x,y\in A,
\end{equation}
defines a pre-Lie algebra $(A,\ast)$, which is called the
associated pre-Lie algebra of $(A,\succ,\prec)$. Therefore there
is the following relationship among Lie algebras,
associative algebras, pre-Lie algebras and dendriform algebras in
the sense of commutative diagram of categories (\cite{Cha1}):
\begin{equation}\label{eq:dendri} \begin{matrix} {\rm dendriform\quad algebras} & \longrightarrow & \mbox{pre-Lie algebras} \cr \downarrow & &\downarrow\cr {\rm
associative\quad algebras} & \longrightarrow & {\rm Lie\quad
algebras.} \cr\end{matrix}\end{equation}

On the other hand, there is an ``anti-structure" for pre-Lie
algebras, namely anti-pre-Lie algebras, introduced in \cite{LB},
which are characterized as the Lie-admissible algebras whose
negative left multiplication operators give representations of the
commutator Lie algebras, justifying the notion since pre-Lie
algebras are the Lie-admissible algebras whose left multiplication
operators give representations of the commutator Lie algebras.

There is a new approach of splitting operations, motivated by the
study of anti-pre-Lie algebras. We introduce the notion of
anti-dendriform algebras, still keeping the property of splitting
the associativity, but it is the negative left and right
multiplication operators that compose the bimodules of the sum
associative algebras, instead of the left and right multiplication
operators doing so for dendriform algebras. Such a
characterization justifies the notion, and moreover, the following
commutative diagram holds, which is the diagram~(\ref{eq:dendri})
with replacing dendriform and pre-Lie algebras by anti-dendriform
and anti-pre-Lie algebras respectively.

\begin{equation}\begin{matrix} \mbox{ anti-dendriform algebras} & \longrightarrow & \mbox{anti-pre-Lie algebras} \cr \downarrow & &\downarrow\cr {\rm
associative\quad algebras} & \longrightarrow & {\rm Lie\quad
algebras} \cr\end{matrix}\label{diag:comm}\end{equation}

\delete{ In this paper, we study the commutative Connes cocycle on
an associative algebra $A,$ that is, a symmetric bilinear form
$\mathcal{B}(\ , \ )$ satsifying Eq.~\eqref{circulation}.
We want to investigate that ``what algebra structure can be
obtained from a non-degenerate commutative Connes cocycle on an
associative algebra?'' and ``what are the difference between them
and dendriform algebras?'' Here the algebra structure is called
the anti-dendriform algebra. Recall that for the dendriform
algebra $(A,\succ,\prec),$ we know that the bilinear operation
\begin{align}\label{intro-2}
x\ast y=x\succ y-y\prec x, \quad \forall x,y\in A,
\end{align}
defines a pre-Lie algebra structure on $A,$ the bilinear operation
\begin{align}\label{intro-1}
x\cdot y=x\succ y+x\prec y, \quad \forall x,y\in A,
\end{align}
defines an associative algebra structure on $A$ and the triple $(A, L_\succ, R_\prec)$ is a bimodule of the associative algebra $(A,\cdot).$
For the anti-dendriform algebras, what algebra structures do the
bilinear operations \eqref{intro-2} and \eqref{intro-1} define on
them respectively? Furthermore, there is a compatible dendriform
algebra structure on an associative algebra $A$ if and only if
there exists an invertible $\mathcal{O}$-operator of $A$
associated to certain suitable bimodule of $A$ (see \cite{BGN1}).
For the anti-dendriform algebra, whether it has the similar
properties. }

As $\mathcal O$-operators and Rota-Baxter operators on associative
algebras interpreting dendriform algebras (\cite{BGN2}), we
introduce the notions of anti-$\mathcal O$-operators and
anti-Rota-Baxter operators on associative algebras to interpret
anti-dendriform algebras. In particular,  there are compatible
anti-dendriform algebra structures on associative algebras with
nondegenerate commutative Connes cocycles.

In \cite{LB}, there is an important observation that there is a
correspondence between Novikov algebras as a subclass of pre-Lie
algebras and admissible Novikov algebras as a subclass of
anti-pre-Lie algebras in terms of $q$-algebras. That is, the
$2$-algebra of a Novikov algebra is an admissible Novikov algebra,
whereas the $-2$-algebra of an admissible Novikov algebra is a
Novikov algebra. We also find there is a similar correspondence
between some subclasses of dendriform algebras and anti-dendriform
algebras in terms of $q$-algebras. Note that such a correspondence
is available for any $q\ne 0, \pm 1$, not for only a special value of $q$ in \cite{LB}, which in fact corresponds
to $q=-2$ in this paper.  We also extend the correspondence between the
subclasses of pre-Lie algebras and anti-pre-Lie algebras for these
$q$s and in particular, for a fixed $q\ne 0,\pm 1$,  the relationship between the
corresponding subclasses of dendriform algebras and pre-Lie
algebras as well as anti-dendriform algebras and anti-pre-Lie
algebras is still kept as the one given by Eq.~(\ref{eq:pre}).

Moreover, there is an interesting byproduct. As a subclass of
pre-Lie algebras, Novikov algebras were introduced in connection
with Hamiltonian operators in the formal variational calculus
(\cite{Gel}) and Poisson brackets of hydrodynamic type
(\cite{Bal}). On the other hand, both pre-Lie algebras and
dendriform algebras are examples of splitting operations and their
operads are the successors of operads of Lie and associative
algebras respectively (\cite{BBGN}). So it is natural to ask
whether and how one can give a reasonable notion of analogues
of Novikov algebras for the successors' algebras, in particular,
for dendriform algebras? In fact, the above approach answers this
problem. Due to the introduction of the notion of anti-dendriform
algebras and the above correspondence, one might introduce the
notion of Novikov-type dendriform algebras as the aforementioned
subclass of dendriform algebras for $q=-2$. The speciality of
$q=-2$ also can be seen from the identity involving $q$
(Proposition~\ref{dendriform-anti-dendriform}). Moreover, it is
consistent with the diagram~(\ref{eq:dendri}) in the following
sense:
\begin{equation}\label{eq:N-dendri} \begin{matrix} {\rm Novikov\text{-}type\; dendriform\; algebras} & \longrightarrow & \mbox{Novikov algebras} \cr \downarrow & &\downarrow\cr {\rm
associative\quad algebras} & \longrightarrow & {\rm Lie\quad
algebras.} \cr\end{matrix}\end{equation} We would like to point out
that this ``rule" of constructing analogues of Novikov algebras
for dendriform algebras is due to the introduction of the notion
of anti-dendriform algebras and hence it is regarded as an
application of the latter.

The paper is organized as follows. In Section 2, we introduce the
notion of anti-dendriform algebras as a new approach of splitting
the associativity. The notions of anti-$\mathcal O$-operators and
anti-Rota-Baxter operators on associative algebras are introduced
to interpret anti-dendriform algebras. The relationships between
anti-dendriform algebras and commutative Connes cocycles on
associative
 algebras are given. In Section 3, we investigate the correspondences of some subclasses
of dendriform algebras and anti-dendriform algebras as well as
pre-Lie algebras and anti-pre-Lie algebras in terms of
$q$-algebras. The relationships among these subclasses are given.
In particular, in the case that
$q=-2$, we introduce the notions of Novikov-type dendriform algebras and
admissible Novikov-type dendriform algebras with their correspondences.
 In Section 4,  we provide a general framework of introducing the
notions of analogues of anti-dendriform algebras to interpret a
new splitting of operations. They are
characterized in terms of double spaces.


Throughout this paper, all vector spaces are assumed to be
finite-dimensional over a field $\mathbb{F}$ of characteristic 0,
although many results are still available in the
infinite-dimensional case.

\section{Anti-dendriform algebras}
We introduce the notion of anti-dendriform algebras as a new
approach of splitting the associativity, characterized as the
associative admissible algebras whose negative left and right
multiplication operators compose the bimodules of the associated
associative algebras. We introduce the notions of anti-$\mathcal
O$-operators and anti-Rota-Baxter operators on associative
algebras to interpret anti-dendriform algebras. There is a
compatible anti-dendriform algebra structure on an associative
algebra if and only if there exists an invertible
anti-$\mathcal{O}$-operator of the associative algebra. In
particular, there are compatible anti-dendriform algebra
structures on associative algebras with nondegenerate commutative
Connes cocycles.


\subsection{Anti-dendriform algebras}

\begin{defi}\label{defi:asso admissible algebras}
Let $A$ be a vector space with two bilinear operations
$$\vartriangleright: A\otimes A\rightarrow A,\quad\vartriangleleft: A\otimes A\rightarrow A.$$
Define a bilinear operation $\cdot$ as
\begin{equation}\label{eq:asso}
x\cdot y=x\vartriangleright y+x\vartriangleleft y,\;\;\forall x,y\in A.
\end{equation}
The triple $(A,\vartriangleright,\vartriangleleft)$ is called an
{\bf associative admissible algebra} if $(A, \cdot)$ is an
associative algebra. In this case, $(A,\cdot)$ is called the {\bf associated
associative algebra} of $(A,\vartriangleright,\vartriangleleft)$.
\end{defi}

\begin{rmk}\label{asso}
The triple $(A,\vartriangleright,\vartriangleleft)$ is an associative admissible algebra if and only if the following equation holds:
\begin{align}
&(x\vartriangleright y)\vartriangleright z+(x\vartriangleleft y)\vartriangleright z+(x\vartriangleright y)\vartriangleleft z+(x\vartriangleleft y)\vartriangleleft z\notag\\
&=x\vartriangleright (y\vartriangleright z)+x\vartriangleright(y\vartriangleleft z)+x\vartriangleleft (y\vartriangleright z)+x\vartriangleleft (y\vartriangleleft z),\;\;\forall x,y,z\in A.\label{anti-admissible algebra}
\end{align}
\end{rmk}

It is known (\cite{Lo}) that dendriform algebras are associative
admissible algebras.

\begin{defi}\label{defi:anti-dendriform algebras}
Let $A$ be a vector space with two bilinear operations $\vartriangleright$ and $\vartriangleleft$.
The triple $(A,\vartriangleright,\vartriangleleft)$ is called an \textbf{anti-dendriform algebra} if the following equations hold:
\begin{equation}\label{eq:defi:anti-dendriform algebras1}
x\vartriangleright(y\vartriangleright z)=-(x\cdot y)\vartriangleright z=-x\vartriangleleft(y\cdot z)=(x\vartriangleleft y)\vartriangleleft z,
\end{equation}
\begin{equation}\label{eq:defi:anti-dendriform algebras2}
(x\vartriangleright y)\vartriangleleft z=x\vartriangleright (y\vartriangleleft z),\;\;\forall x,y,z
\in A,
\end{equation}
where the bilinear operation $\cdot$ is defined by Eq.~(\ref{eq:asso}).
\end{defi}

\begin{ex}\label{one-dim} Let $(A
,\vartriangleright,\vartriangleleft)$ be an 1-dimensional
anti-dendriform algebra with a basis $\{e\}$. Assume that
$$e\vartriangleright e=\alpha e,\quad e\vartriangleleft
e=\beta e,$$ where $\alpha,\beta\in\mathbb{F}$. Then by
Eq.~(\ref{eq:defi:anti-dendriform algebras1}), we have
$$\alpha^2e=(-\alpha^2-\alpha\beta) e=(-\beta^2-\alpha\beta)
e=\beta^2e.$$ Hence $\alpha=\beta=0$, that is, any 1-dimensional
anti-dendriform algebra is trivial.
\end{ex}

Recall that $(A,\circ)$ is called a \textbf{Lie-admissible
algebra}, where $A$ is a vector space with a bilinear operation
$\circ: A\otimes A\rightarrow A$, if the bilinear operation
$[\,,\,]:A\otimes A\rightarrow A$ defined by
\begin{equation} [x,y]=x\circ y-y\circ x,\;\; \forall x,y\in A,\label{eq:Lie}
\end{equation}
makes $(A,[\,,\,])$ a Lie algebra, which is called the \textbf{sub-adjacent Lie algebra} of
$(A,\circ)$ and denoted by $(\frak g(A),[\,,\,])$.
Obviously, an associative algebra is a Lie-admissible algebra.

An {\bf anti-pre-Lie algebra} (\cite{LB}) is a vector space $A$ with a bilinear operation $\circ$ satisfying
\begin{align}
  &x\circ(y\circ z)-y\circ(x\circ z)=[y,x]\circ z, \label{eq:21}\\
  &[x,y]\circ z+[y,z]\circ x+[z,x]\circ y=0, \;\;\forall x,y,z\in
  A,\label{eq:22}
\end{align}
where the bilinear operation $[\,,\,]$ is defined by
Eq.~(\ref{eq:Lie}). Equivalently, an anti-pre-Lie algebra
$(A,\circ)$ is a Lie-admissible algebra satisfying Eq.~(\ref{eq:21}).


\begin{pro}\label{property-1}
Let $(A, \vartriangleright, \vartriangleleft)$ be an anti-dendriform algebra. 
\begin{enumerate}
 \item\label{it:1} Define a bilinear operation $\cdot$ by Eq.~\eqref{eq:asso}. Then $(A,\cdot)$ is an associative algebra,
 called the {\bf associated associative algebra} of $(A, \vartriangleright, \vartriangleleft).$
 Furthermore, $(A, \vartriangleright, \vartriangleleft)$ is called a {\bf compatible anti-dendriform algebra structure} on $(A,\cdot).$
   \item\label{it:2} The bilinear operation $\circ:A\otimes A\rightarrow A$ given by \begin{equation}\label{eq:antipl1}x\circ y=x\vartriangleright y-y\vartriangleleft x,\ \ \forall x,y\in
   A,\end{equation}
  defines an anti-pre-Lie algebra, called the {\bf associated anti-pre-Lie algebra} of $(A, \vartriangleright, \vartriangleleft).$
  \item\label{it:3} Both $(A, \cdot)$ and $(A, \circ)$ have the same sub-adjacent Lie algebra $(\frak g(A),[\;,\,])$ defined by
\begin{equation}[x, y]=x\vartriangleright y+x\vartriangleleft
y-y\vartriangleright x-y\vartriangleleft x,\quad \forall x,y\in
A.\end{equation}
\end{enumerate}
\end{pro}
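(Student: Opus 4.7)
The plan is to prove the three items in a cycle where (1) and (3) are fairly quick consequences of the defining identities, and (2) reduces to a single identity (the other one will follow from (3) automatically).

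\smallskip

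\textbf{Part (1).} Associativity of $\cdot$ is equivalent to Eq.~\eqref{anti-admissible algebra}. Expand both sides. By \eqref{eq:defi:anti-dendriform algebras2} I match the terms $(x\vartriangleright y)\vartriangleleft z$ and $x\vartriangleright(y\vartriangleleft z)$. By the outer equality in \eqref{eq:defi:anti-dendriform algebras1} I match $(x\vartriangleleft y)\vartriangleleft z$ with $x\vartriangleright(y\vartriangleright z)$. The remaining four terms regroup as $(x\cdot y)\vartriangleright z$ on the left and $x\vartriangleleft(y\cdot z)$ on the right, and these coincide by the middle equality of \eqref{eq:defi:anti-dendriform algebras1} (both equal the negative of $x\vartriangleright(y\vartriangleright z)$). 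So Eq.~\eqref{anti-admissible algebra} holds and $(A,\cdot)$ is associative.

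\smallskip

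\textbf{Part (3) together with Lie-admissibility of $(A,\circ)$.} I first compute
\[
x\circ y-y\circ x=(x\vartriangleright y-y\vartriangleleft x)-(y\vartriangleright x-x\vartriangleleft y)=x\cdot y-y\cdot x,
\]
so the commutator brackets of $\circ$ and $\cdot$ agree and equal the bracket displayed in (3). Since $(A,\cdot)$ is associative by (1), this bracket is a Lie bracket, which simultaneously proves (3) and shows $(A,\circ)$ is Lie-admissible with sub-adjacent Lie algebra $(\mathfrak g(A),[\,,\,])$. Thus Eq.~\eqref{eq:22} is established, and for (2) it remains only to verify Eq.~\eqref{eq:21}.

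\smallskip

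\textbf{Part (2), the identity \eqref{eq:21}.} This is the main computation. I expand
\[
x\circ(y\circ z)=x\vartriangleright(y\vartriangleright z)-x\vartriangleright(z\vartriangleleft y)-(y\vartriangleright z)\vartriangleleft x+(z\vartriangleleft y)\vartriangleleft x,
\]
and the analogous expression for $y\circ(x\circ z)$. Using \eqref{eq:defi:anti-dendriform algebras2} the two middle pairs of terms in the difference cancel, because $x\vartriangleright(z\vartriangleleft y)=(x\vartriangleright z)\vartriangleleft y$ and $(y\vartriangleright z)\vartriangleleft x=y\vartriangleright(z\vartriangleleft x)$ (and symmetrically). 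For the first terms I use \eqref{eq:defi:anti-dendriform algebras1} to rewrite $x\vartriangleright(y\vartriangleright z)=-(x\cdot y)\vartriangleright z$, giving the contribution $-[x,y]\vartriangleright z$. For the last terms I use \eqref{eq:defi:anti-dendriform algebras1} in the form $(u\vartriangleleft v)\vartriangleleft w=-u\vartriangleleft(v\cdot w)$ to get $(z\vartriangleleft y)\vartriangleleft x-(z\vartriangleleft x)\vartriangleleft y=-z\vartriangleleft(y\cdot x)+z\vartriangleleft(x\cdot y)=z\vartriangleleft[x,y]$. Adding these two surviving contributions yields
\[
x\circ(y\circ z)-y\circ(x\circ z)=-[x,y]\vartriangleright z+z\vartriangleleft[x,y]=[y,x]\vartriangleright z-z\vartriangleleft[y,x]=[y,x]\circ z,
\]
which is exactly \eqref{eq:21}.

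\smallskip

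The main obstacle is the bookkeeping in Part (2): eight terms on each side have to be grouped so that the mixed pairs cancel via \eqref{eq:defi:anti-dendriform algebras2} and the pure $\vartriangleright$/$\vartriangleleft$ terms convert, via the four-way equality \eqref{eq:defi:anti-dendriform algebras1}, into the bracketed expression $[y,x]\circ z$. Everything else is essentially formal.
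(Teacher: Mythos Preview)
Your proof is correct and follows essentially the same route as the paper's. The only cosmetic difference is organizational: you establish Lie-admissibility of $(A,\circ)$ first (via the equality of commutators with $\cdot$) and then verify only Eq.~\eqref{eq:21}, whereas the paper expands all four expressions $x\circ(y\circ z)$, $y\circ(x\circ z)$, $(x\circ y)\circ z$, $(y\circ x)\circ z$ and matches the two sides of Eq.~\eqref{eq:21} directly before noting the commutator coincidence; the underlying cancellations via \eqref{eq:defi:anti-dendriform algebras2} and the rewriting via \eqref{eq:defi:anti-dendriform algebras1} are identical in both arguments.
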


\begin{proof}

(\ref{it:11}). Obviously  Eq.~\eqref{anti-admissible algebra} follows from Eqs.~\eqref{eq:defi:anti-dendriform algebras1} and ~\eqref{eq:defi:anti-dendriform algebras2}.
Hence $(A,\cdot)$ is an associative algebra by Remark \ref{asso}.

(\ref{it:2}). Let $x,y,z\in A$. Then we have
 \begin{eqnarray*}
      x\circ (y\circ z)
                     &=&x\vartriangleright(y\vartriangleright z-z\vartriangleleft y)-(y\vartriangleright z-z\vartriangleleft y)\vartriangleleft x  \\
                     &=&x\vartriangleright(y\vartriangleright z)-x\vartriangleright(z\vartriangleleft y)-(y\vartriangleright z)\vartriangleleft x+(z\vartriangleleft y)\vartriangleleft x,\\
      (y\circ x)\circ z
                     &=&(y\vartriangleright x-x\vartriangleleft y)\vartriangleright z-z\vartriangleleft (y\vartriangleright x-x\vartriangleleft y) \\
                     &=&(y\vartriangleright x)\vartriangleright z-(x\vartriangleleft y)\vartriangleright z-z\vartriangleleft(y\vartriangleright x)+z\vartriangleleft(x\vartriangleleft y).
    \end{eqnarray*}
By swapping $x$ and $y,$ we have
 \begin{align*}
 y\circ (x\circ z)&=y\vartriangleright(x\vartriangleright z)-y\vartriangleright(z\vartriangleleft x)-(x\vartriangleright z)\vartriangleleft y+(z\vartriangleleft x)\vartriangleleft y,\\
 (x\circ y)\circ z&=(x\vartriangleright y)\vartriangleright z-(y\vartriangleleft x)\vartriangleright z-z\vartriangleleft(x\vartriangleright y)+z\vartriangleleft(y\vartriangleleft x).
 \end{align*}
Using Eqs.~\eqref{eq:defi:anti-dendriform algebras1} and~\eqref{eq:defi:anti-dendriform algebras2}, we obtain
\begin{align}
 x\circ (y\circ z)-y\circ (x\circ z)=&x\vartriangleright(y\vartriangleright z)+(z\vartriangleleft y)\vartriangleleft x-y\vartriangleright(x\vartriangleright z)-(z\vartriangleleft x)\vartriangleleft y\notag \\
                                =&(y\vartriangleright x+y\vartriangleleft x)\vartriangleright z-(x\vartriangleright y+x\vartriangleleft y)\vartriangleright z\notag \\
                                &-z\vartriangleleft(y\vartriangleright x+y\vartriangleleft x)+z\vartriangleleft (x\vartriangleright y+x\vartriangleleft y) \notag\label{anti-pre Lie 1}\\
                                =&(y\circ x)\circ z-(x\circ y)\circ z\notag=[y, x]\circ z\notag.
 \end{align}
Moreover, we have
$$x\circ y-y\circ x=x\vartriangleright y-y\vartriangleleft x-y\vartriangleright x+x\vartriangleleft y=x\cdot y-y\cdot x,\;\;\forall x,y\in A.$$
Thus $(A,\circ)$ is a Lie-admissible algebra and hence an anti-pre-Lie algebra.


\delete{From the above computations, we have $$[y, x]\circ
z=x\vartriangleright(y\vartriangleright z)+(z\vartriangleleft
y)\vartriangleleft x-y\vartriangleright(x\vartriangleright
z)-(z\vartriangleleft x)\vartriangleleft y.$$ By permuting
$y,x,z$,  we have
\begin{align*}
[x, z]\circ y=&z\vartriangleright(x\vartriangleright y)+(y\vartriangleleft x)\vartriangleleft z-x\vartriangleright(z\vartriangleright y)-(y\vartriangleleft z)\vartriangleleft x,\\
[z, y]\circ x=&y\vartriangleright(z\vartriangleright x)+(x\vartriangleleft z)\vartriangleleft y-z\vartriangleright(y\vartriangleright x)-(x\vartriangleleft y)\vartriangleleft z.
\end{align*}
So by Eq.~\eqref{eq:defi:anti-dendriform algebras1}, we have
\begin{equation*}\label{anti-pre Lie 2}
[y, x]\circ z+[x, z]\circ y+[z, y]\circ x=0.
\end{equation*}}

(\ref{it:3}). It is straightforward.
Note that it also appears in the proof of
Item~(\ref{it:2}).
\end{proof}

As a direct consequence, we have the following conclusion.

\begin{cor}
The commutative diagram~ {\rm (\ref{diag:comm})} holds.
\end{cor}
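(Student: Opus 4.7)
The plan is essentially to read off the commutativity from Proposition \ref{property-1}, so the ``proof'' is almost a one-liner. Starting from an anti-dendriform algebra $(A,\vartriangleright,\vartriangleleft)$, the two paths around the square each produce a Lie algebra structure on the underlying vector space $A$, and the job is to check that these two Lie algebras coincide. Before doing so, I would briefly note that each of the four arrows in the diagram is a well-defined construction: the two vertical arrows are given respectively by part (\ref{it:1}) of Proposition \ref{property-1} (anti-dendriform $\to$ associative via $\cdot$) and by the standard commutator functor (anti-pre-Lie, being Lie-admissible, $\to$ Lie); the top horizontal arrow is given by part (\ref{it:2}) of Proposition \ref{property-1} via the operation $\circ$ in Eq.~(\ref{eq:antipl1}); and the bottom horizontal arrow is the usual commutator from associative to Lie.

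Next, I would trace the two routes explicitly on an arbitrary anti-dendriform algebra $(A,\vartriangleright,\vartriangleleft)$. Along the top-right route, part (\ref{it:2}) produces the anti-pre-Lie algebra $(A,\circ)$ with $x\circ y = x\vartriangleright y - y\vartriangleleft x$, and its sub-adjacent Lie bracket is
\[
[x,y]_{\circ} = x\circ y - y\circ x = x\vartriangleright y - y\vartriangleleft x - y\vartriangleright x + x\vartriangleleft y.
\]
Along the left-bottom route, part (\ref{it:1}) produces the associative algebra $(A,\cdot)$ with $x\cdot y = x\vartriangleright y + x\vartriangleleft y$, whose commutator is
\[
[x,y]_{\cdot} = x\cdot y - y\cdot x = x\vartriangleright y + x\vartriangleleft y - y\vartriangleright x - y\vartriangleleft x.
\]
A direct inspection shows $[x,y]_{\circ} = [x,y]_{\cdot}$, which is precisely what part (\ref{it:3}) of Proposition \ref{property-1} already records.

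There is no real obstacle: once Proposition \ref{property-1} is in hand, the corollary is immediate, and I would simply cite parts (\ref{it:1})--(\ref{it:3}) to conclude that the diagram (\ref{diag:comm}) commutes. The only mild subtlety is pedagogical, namely to make clear that commutativity of the diagram is a statement about the underlying Lie brackets agreeing, not merely about the existence of arrows in each direction; this is covered by the explicit bracket computation above.
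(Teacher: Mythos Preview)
Your proposal is correct and matches the paper's approach exactly: the paper presents this corollary as an immediate consequence of Proposition~\ref{property-1} (in particular part~(\ref{it:3})), without further argument. Your explicit tracing of the two routes and comparison of the brackets $[x,y]_\circ$ and $[x,y]_\cdot$ is precisely the content of part~(\ref{it:3}), so there is nothing to add.
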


Recall that an associative algebra $(A,\cdot)$ is 2-nilpotent if $(x\cdot y)\cdot z=x\cdot (y\cdot z)=0$ for all
$x,y,z\in A$. 

\begin{pro}
Let $(A,\cdot)$ be a 2-nilpotent associative algebra. Then
$(A, \vartriangleright, \vartriangleleft)$ is a compatible anti-dendriform algebra if $\vartriangleright=\cdot, \vartriangleleft=0$ or $\vartriangleright=0, \vartriangleleft=\cdot$. Conversely, let $(A, \vartriangleright, \vartriangleleft)$ be an anti-dendriform algebra. If  $\vartriangleleft=0$ or $\vartriangleright=0$,
 then the associated associative algebra is 2-nilpotent.
\end{pro}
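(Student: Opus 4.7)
The statement has two directions, both of which reduce to direct inspection of the defining axioms \eqref{eq:defi:anti-dendriform algebras1} and \eqref{eq:defi:anti-dendriform algebras2}.

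For the forward direction, assume $(A,\cdot)$ is $2$-nilpotent. Consider the case $\vartriangleright = \cdot$, $\vartriangleleft = 0$ (the case $\vartriangleright = 0$, $\vartriangleleft = \cdot$ is completely symmetric). The compatibility in the sense of Eq.~\eqref{eq:asso} is immediate: $x\vartriangleright y + x\vartriangleleft y = x\cdot y$. For \eqref{eq:defi:anti-dendriform algebras1}, the four expressions become
\[
x\cdot(y\cdot z),\ -(x\cdot y)\cdot z,\ 0,\ 0,
\]
and all four vanish by $2$-nilpotency. For \eqref{eq:defi:anti-dendriform algebras2}, both sides are zero because $\vartriangleleft = 0$. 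So this half is just a one-line verification.

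For the converse, suppose $(A,\vartriangleright,\vartriangleleft)$ is anti-dendriform with $\vartriangleleft = 0$. Then $x\cdot y = x\vartriangleright y$, and Eq.~\eqref{eq:defi:anti-dendriform algebras1} gives the chain
\[
x\cdot(y\cdot z) = x\vartriangleright(y\vartriangleright z) = (x\vartriangleleft y)\vartriangleleft z = 0,
\]
as well as
\[
-(x\cdot y)\cdot z = -(x\cdot y)\vartriangleright z = x\vartriangleright(y\vartriangleright z) = 0,
\]
so $(x\cdot y)\cdot z = 0$ too; hence $(A,\cdot)$ is $2$-nilpotent. The dual case $\vartriangleright = 0$ is handled identically by reading the other equalities in \eqref{eq:defi:anti-dendriform algebras1}: from $x\vartriangleright(y\vartriangleright z) = -x\vartriangleleft(y\cdot z)$ we get $x\cdot(y\cdot z) = 0$, and from $(x\vartriangleleft y)\vartriangleleft z = -x\vartriangleleft(y\cdot z)$ we get $(x\cdot y)\cdot z = 0$.

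There is no real obstacle here; the content is entirely in chasing the four-term identity \eqref{eq:defi:anti-dendriform algebras1} with one of the operations set to zero. The only point worth flagging is that one must use both of the outer equalities in \eqref{eq:defi:anti-dendriform algebras1} (not just the middle one) in order to force both $(x\cdot y)\cdot z$ and $x\cdot(y\cdot z)$ to vanish independently.
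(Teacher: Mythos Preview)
Your argument is correct and is exactly the straightforward verification the paper has in mind (the paper's own proof reads simply ``It is straightforward''). One minor redundancy: since the associated algebra $(A,\cdot)$ is automatically associative, once you know $x\cdot(y\cdot z)=0$ you get $(x\cdot y)\cdot z=0$ for free, so tracking both outer equalities separately is not strictly necessary.
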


\begin{proof}
It is straightforward.
\end{proof}

\begin{pro}\label{idempotent}
Let $(A,\cdot)$ be an associative algebra with a non-zero idempotent $e$, that is, $e\cdot e=e$. Then there does not exist a compatible anti-dendriform algebra structure on $(A,\cdot)$.
\end{pro}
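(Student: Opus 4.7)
The plan is to derive a contradiction by specializing the anti-dendriform axioms to the triple $(e,e,e)$. Suppose $(A,\vartriangleright,\vartriangleleft)$ is a compatible anti-dendriform algebra structure on $(A,\cdot)$, and set $a:=e\vartriangleright e$ and $b:=e\vartriangleleft e$. Compatibility together with $e\cdot e=e$ gives $a+b=e$.

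Next I would apply Eq.~(\ref{eq:defi:anti-dendriform algebras1}) with $x=y=z=e$. Using $e\cdot e=e$, the four expressions reduce to
\[
e\vartriangleright a \;=\; -(e\cdot e)\vartriangleright e \;=\; -a,\qquad
e\vartriangleright a \;=\; -e\vartriangleleft(e\cdot e) \;=\; -b,\qquad
b\vartriangleleft e \;=\; -a.
\]
Comparing the first two yields $a=b$, and combined with $a+b=e$ this forces $a=b=\tfrac{1}{2}e$, which is legitimate since we are in characteristic $0$.

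Finally, I would plug this back into the identity $e\vartriangleright a=-a$. Since $a=\tfrac{1}{2}e$, the left-hand side equals $\tfrac{1}{2}(e\vartriangleright e)=\tfrac{1}{4}e$, while the right-hand side equals $-\tfrac{1}{2}e$. Hence $\tfrac{3}{4}e=0$, contradicting $e\neq 0$.

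I do not expect any real obstacle: the entire argument is a two-line calculation after specializing to $x=y=z=e$, and the use of characteristic $0$ is exactly to legitimate the division by $2$ (and by $4$) that converts the axioms into a forced contradiction. The only thing to be careful about is to draw both conclusions $a=b$ and $a+b=e$ cleanly before substituting, so the final contradiction is unambiguous.
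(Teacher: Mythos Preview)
Your proposal is correct and follows essentially the same approach as the paper's proof: both specialize Eq.~(\ref{eq:defi:anti-dendriform algebras1}) to $x=y=z=e$, use $e\cdot e=e$ together with $e\vartriangleright e+e\vartriangleleft e=e$ to force $e\vartriangleright e=e\vartriangleleft e=\tfrac{1}{2}e$, and then obtain the contradiction $\tfrac{1}{4}e=-\tfrac{1}{2}e$.
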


\begin{proof}
Assume that $(A, \vartriangleright, \vartriangleleft)$ is a compatible anti-dendriform algebra structure on $(A,\cdot)$. Then by Eq.~\eqref{eq:defi:anti-dendriform algebras1} for $e,e,e$, we have
 $$e\vartriangleright e=(e\cdot e)\vartriangleright e=e\vartriangleleft (e\cdot e)=e\vartriangleleft e.$$
On the other hand, note that $e=e\cdot e= e\vartriangleright
e+e\vartriangleleft e$. Hence $e\vartriangleright
e=e\vartriangleleft e=\frac{1}{2}e$. Then by
Eq.~\eqref{eq:defi:anti-dendriform algebras1} for $e,e,e$ again,
we have
\begin{equation}\label{idempotent-1}
\frac{1}{4}e=e\vartriangleright (e\vartriangleright e)=-(e\cdot e)\vartriangleright e=-\frac{1}{2}e,
\end{equation}
which is  a contradiction.
Hence the conclusion holds.
\end{proof}

It is known that any finite-dimensional associative algebra  without a non-zero idempotent element  is nilpotent. Therefore we have the following conclusion.

\begin{cor}
The associated associative algebra of any anti-dendriform algebra is nilpotent.
\end{cor}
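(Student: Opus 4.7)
The statement is an immediate corollary of Proposition~\ref{idempotent} combined with the classical structure theorem for finite-dimensional associative algebras, so my proof plan is very short: essentially two lines.

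First I would let $(A,\vartriangleright,\vartriangleleft)$ be an anti-dendriform algebra and let $(A,\cdot)$ denote its associated associative algebra, which exists by Proposition~\ref{property-1}(\ref{it:1}). By Proposition~\ref{idempotent}, the algebra $(A,\cdot)$ cannot contain any non-zero idempotent, for otherwise the same computation leading to the contradiction $\tfrac14 e = -\tfrac12 e$ in Eq.~\eqref{idempotent-1} would apply.

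Second I would invoke the background fact cited just before the corollary: a finite-dimensional associative algebra without non-zero idempotents is nilpotent. The justification is standard Wedderburn theory: for a finite-dimensional associative algebra $(A,\cdot)$, the Jacobson radical $J$ is nilpotent and the quotient $A/J$ is semisimple, hence a product of matrix algebras over division rings; if $A/J$ were non-zero it would contain non-zero idempotents, which lift to non-zero idempotents of $A$ by the idempotent lifting theorem. Since Proposition~\ref{idempotent} rules this out, we must have $A = J$, and therefore $(A,\cdot)$ is nilpotent.

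There is no real obstacle here; the only thing to be careful about is the running hypothesis stated in the introduction that all vector spaces are finite-dimensional over a field of characteristic zero, which is what allows the Wedderburn/idempotent-lifting argument to go through and is what makes the cited implication (``no non-zero idempotent $\Rightarrow$ nilpotent'') applicable. In an infinite-dimensional setting the conclusion could fail, so the finite-dimensional hypothesis is genuinely used via this step, even though Proposition~\ref{idempotent} itself holds more generally.
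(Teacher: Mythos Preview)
Your proposal is correct and follows essentially the same route as the paper: apply Proposition~\ref{idempotent} to conclude that the associated associative algebra has no non-zero idempotent, and then invoke the classical fact (stated just before the corollary) that a finite-dimensional associative algebra without non-zero idempotents is nilpotent. Your additional justification of this fact via Wedderburn theory and idempotent lifting is a helpful elaboration, but the underlying argument is identical to the paper's.
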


\begin{ex}\label{ex:2-dim}
Let $(A,\cdot)$ be 2-dimensional nilpotent associative algebra over the complex field $\mathbb C$ with a basis $\{e_1,e_2\}$. Then it is known (for example see \cite{BM} or \cite{Bu1}) that $(A,\cdot)$ is isomorphic to
one of the following cases (only non-zero products are given):
\begin{enumerate}
\item [$(A1)_{\mbox{\ }}$] $(A,\cdot)$ is trivial, that is, all products are zero;
\item [$(A2)_{\mbox{\ }}$]$e_1\cdot e_1=e_2$.
\end{enumerate}
Obviously, both them are 2-nilpotent associative algebras.
Assume that $(A, \vartriangleright, \vartriangleleft)$ is a compatible anti-dendriform algebra structure on $(A,\cdot)$. Set
$$e_i \vartriangleright e_j=\alpha_{ij}e_1+\beta_{ij}e_2,\;\;\alpha_{ij},\beta_{ij}\in\mathbb{C},\;\; 1\leq i,j\leq 2.$$

\begin{enumerate}
\item[(I)] \label{it:1}$(A,\cdot)$ is $(A1)$. Then we have
$$e_i \vartriangleleft e_j=-\alpha_{ij}e_1-\beta_{ij}e_2,\;\; 1\leq i,j\leq 2.$$

\begin{enumerate}
\item[Case (1)] $\alpha_{22}=0$. Then $\mathbb{C}e_2$ is an 1-dimensional subalgebra of $(A, \vartriangleright, \vartriangleleft)$.
By Example \ref{one-dim}, we have $\beta_{22}=0$.
By Eq.~\eqref{eq:defi:anti-dendriform algebras1} for $e_1,e_2,e_2$ and $e_2,e_2,e_1$ respectively, we have
$$\alpha_{12}^2 e_1+\alpha_{12}\beta_{12}e_2=\alpha_{21}^2 e_1+\alpha_{21}\beta_{21}e_2=0.$$
Thus $\alpha_{12}=\alpha_{21}=0$. By
Eq.~(\ref{eq:defi:anti-dendriform algebras1}) for $e_1,e_1,e_2$ and
$e_2,e_1,e_1$ respectively, we have $\beta_{12}=\beta_{21}=0$.
By Eq.~(\ref{eq:defi:anti-dendriform algebras1}) for $e_1,e_1,e_1$, we have $\alpha_{11}=0$.

\item[Case (2)] $\beta_{11}=0$. Then by the linear transformation $e_1\rightarrow e_2, e_2\rightarrow e_1$, we get Case (1).

\item[Case (3)] $\beta_{11}\ne 0, \alpha_{22}\ne 0$.
By Eq.~(\ref{eq:defi:anti-dendriform algebras2}) for $e_1,e_1,e_2$,
we have
\begin{align*}
&\alpha_{11}\alpha_{12}+\beta_{11}\alpha_{22}=\alpha_{11}\alpha_{12}+\beta_{12}\alpha_{12},\ \ \alpha_{11}\beta_{12}+\beta_{11}\beta_{22}=\alpha_{12}\beta_{11}+\beta_{12}^2.
\end{align*}
Hence $\alpha_{12}\ne 0, \beta_{12}\ne 0$.
By Eq.~(\ref{eq:defi:anti-dendriform algebras1}) for $e_1,e_1,e_1$ and $e_2,e_2,e_2$ respectively,
we have
\begin{eqnarray*}
\alpha_{11}^2+\beta_{11}\alpha_{12}&=&\alpha_{11}^2+\beta_{11}\alpha_{21}=\beta_{11}(\alpha_{11}+\beta_{12})=\beta_{11}(\alpha_{11}+\beta_{21})=0,\label{e1e1e1}\\
(\alpha_{21}+\beta_{22})\alpha_{22}&=&(\alpha_{12}+\beta_{22})\alpha_{22}=\beta_{21}\alpha_{22}+\beta_{22}^2=\alpha_{22}\beta_{12}+\beta_{22}^2=0.\label{e1e1e2}
\end{eqnarray*}
Therefore  we have
$$\alpha_{12}=\alpha_{21}=-\beta_{22}=-\frac{\alpha_{11}^2}{\beta_{11}}\ne 0,\ \ \beta_{12}=\beta_{21}=-\alpha_{11},\ \ \alpha_{22}=\frac{\alpha_{11}^3}{\beta_{11}^2}.$$
Hence by a straightforward computation, we have
$$(\frac{\alpha_{11}}{\beta_{11}}e_1+e_2)\vartriangleright(\frac{\alpha_{11}}{\beta_{11}}e_1+e_2)=(\frac{\alpha_{11}}{\beta_{11}}e_1+e_2)\vartriangleleft(\frac{\alpha_{11}}{\beta_{11}}e_1+e_2)=0.$$
Thus by the linear transformation $e_1\rightarrow e_1, e_2\rightarrow \frac{\alpha_{11}}{\beta_{11}}e_1+e_2$, we get Case (1).
\end{enumerate}
Obviously, $(A, \vartriangleright, \vartriangleleft)$ with the non-zero products given by
$$e_1\vartriangleright e_1=\gamma e_2,\ \ e_1\vartriangleleft e_1=-\gamma e_2,\;\;\gamma\in \mathbb C,$$
is an anti-dendriform algebra, corresponding to the above Case (1) with $\beta_{11}=\gamma$. Moreover it is straightforward to show that
these anti-dendriform algebras are classified up to isomorphism into the following two cases (only non-zero operations are given):
\begin{enumerate}
    \item [$(A1)_1$] $(A ,\vartriangleright,\vartriangleleft)$ is trivial;
    \item [$(A1)_2$] $e_1\vartriangleright e_1=e_2,\ \ e_1\vartriangleleft e_1=-e_2$.
\end{enumerate}
\item[(II)] $(A,\cdot)$ is $(A2)$. Then we have
\begin{align*}
&e_1\vartriangleleft e_1=-\alpha_{11} e_1-(\beta_{11}-1) e_2,\ \  e_1\vartriangleleft e_2=-\alpha_{12} e_1-\beta_{12} e_2,\\
&e_2\vartriangleleft e_1=-\alpha_{21} e_1-\beta_{21} e_2,\ \ e_2\vartriangleleft e_2=-\alpha_{22} e_1-\beta_{22} e_2.
\end{align*}
\begin{enumerate}
\item[Case (1)] $\alpha_{22}=0$. Then by a similar discussion as for Case (1) of (I), we have
$$\alpha_{11}=\alpha_{12}=\alpha_{21}=\beta_{12}=\beta_{21}=\beta_{22}=0.$$
\item[Case (2)] $\alpha_{22}\ne 0$. By Eq.~(\ref{eq:defi:anti-dendriform algebras1}) for $e_2,e_2,e_2$,
we have
$$\beta_{22}^2+\alpha_{22}\beta_{21}=\beta_{22}^2+\beta_{12}\alpha_{22}=\alpha_{22}(\alpha_{21}+\beta_{22})=\alpha_{22}(\alpha_{12}+\beta_{22})=0.$$
Thus we have
\begin{equation*}\label{e2e2e2-2}
\beta_{22}^2+\alpha_{22}\beta_{21}=0, \ \ \alpha_{12}=\alpha_{21}=-\beta_{22},\ \  \beta_{12}=\beta_{21}.
\end{equation*}
By Eq.~(\ref{eq:defi:anti-dendriform algebras1}) for $e_1,e_1,e_1$, we have
\begin{equation*}\label{e1e1e1-2}
\alpha_{21}=-\alpha_{12},\;\;\beta_{21}=-\beta_{12}.
\end{equation*}
Therefore we have
\begin{align}\label{e1e1e1-21}
\alpha_{12}=\alpha_{21}=\beta_{12}=\beta_{21}=\beta_{22}=0.
\end{align}
Hence by Eq.~(\ref{eq:defi:anti-dendriform algebras1}) for $e_1,e_1,e_2$, we have
$$-e_1\vartriangleright (e_1\vartriangleright e_2)=(e_1\vartriangleright e_1+e_1\vartriangleleft e_1)\vartriangleright e_2=\alpha_{22}e_1+\beta_{22}e_2=0.$$
Thus $\alpha_{22}=0$, which is a contradiction.
\end{enumerate}
Obviously, $(A, \vartriangleright, \vartriangleleft)$ with the non-zero products given by
$$e_1\vartriangleright e_1=\gamma e_2,\ \ e_1\vartriangleleft e_1=(1-\gamma) e_2,\;\;\gamma\in \mathbb C,$$
is an anti-dendriform algebra, corresponding to the above Case (1) with $\beta_{11}=\gamma$. Moreover it is straightforward to show that
these anti-dendriform algebras are classified up to isomorphism into the following cases (only non-zero operations are given):
\begin{enumerate}
    \item [$(A2)_{1\;\;\ \mbox{}}$] $e_1\vartriangleleft e_1=e_2$;
    \item [$(A2)_{2,\lambda}$] $e_1\vartriangleright e_1=e_2,\ \ e_1\vartriangleleft e_1=\lambda e_2$, where $\lambda\in\mathbb{C}$ with $\lambda\ne -1$.
\end{enumerate}
\end{enumerate}
In a summary, any 2-dimensional complex anti-dendriform algebra $(A ,\vartriangleright,\vartriangleleft)$ is isomorphic to one of the following mutually
non-isomorphic cases (only non-zero products are given):
\begin{enumerate}
\item [$(B1)_{\mbox{\ }}$] $(A ,\vartriangleright,\vartriangleleft)$ is trivial;
\item [$(B2)_{\mbox{\ }}$] $e_1\vartriangleleft e_1=e_2$;
\item [$(B3)_{\lambda}$] $e_1\vartriangleright e_1=e_2,\ \ e_1\vartriangleleft e_1=\lambda e_2$, where $\lambda\in\mathbb{C}$.
\end{enumerate}
Obviously, these anti-dendriform algebras are ``2-nilpotent" in the sense that all products involving three elements
such as $(x\vartriangleright y)\vartriangleright z$ and $x\vartriangleleft( y\vartriangleleft z)$
 are zero.
\end{ex}


\begin{ex}\label{3-dimension}
Let $(A,\cdot)$ be a 3-dimensional associative algebra with a basis $\{e_1,e_2,e_3\}$ whose nonzero products are given by
$$e_1\cdot e_1=e_2,\quad e_1\cdot e_2=e_2\cdot e_1=e_3.$$
By a straightforward computation,  $(A ,\vartriangleright,\vartriangleleft)$ is a compatible anti-dendriform algebra structure on $(A,\cdot)$ with the following non-zero products:
\begin{equation*}
e_1\vartriangleright e_1=\frac{1}{2}e_2+\gamma e_3,\;\; e_1\vartriangleleft e_1=\frac{1}{2}e_2-\gamma e_3,\;\;e_1\vartriangleright e_2=e_2\vartriangleleft e_1=2e_3,\; \; e_2\vartriangleright e_1=e_1\vartriangleleft e_2=-e_3.
\end{equation*}
 for any $\gamma\in\mathbb{F}$. Note that $(A ,\vartriangleright,\vartriangleleft)$ is not ``2-nilpotent" since $(e_1\vartriangleright e_1)\vartriangleleft e_1=e_3$.
\end{ex}

Let $(A,\cdot)$ be an associative algebra. Recall that a {\bf bimodule} of $(A,\cdot)$ is a triple $(V, l, r)$ consisting of a vector space $V$ and linear maps
$l,r: A\rightarrow \rm{End}_\mathbb{F}(V)$ such that
$$l(x\cdot y)v=l(x)(l(y)v), \ \ r(x\cdot y)v=r(y)(r(x)v), \ \ l(x)(r(y)v)=r(y)(l(x)v),\ \ \forall x,y\in A, v\in V.$$
In particular, $(A, L_\cdot, R_\cdot)$ is a bimodule of $(A,\cdot)$, where $L_\cdot,R_\cdot:A\rightarrow {\rm End}_\mathbb{F}(A)$ are two linear maps defined by
$L_\cdot(x)(y)=R_\cdot(y)(x)=x\cdot y$ for all $x,y\in A$ respectively.

Let $(A, \vartriangleright, \vartriangleleft)$ be an associative admissible algebra. Define two linear maps $L_\vartriangleright,R_\vartriangleleft:A\rightarrow {\rm End}_\mathbb{F}(A)$ respectively by
$$L_\vartriangleright(x)(y)=x\vartriangleright y,\quad R_\vartriangleleft(x)(y)=y\vartriangleleft x,\quad \forall x,y\in A.$$


\begin{pro}\label{anti-dendriform algebra equivalent}
Let $A$ be a vector space with two bilinear operations $\vartriangleright$ and $\vartriangleleft$. Define a bilinear operation $\cdot$ by Eq.~\eqref{eq:asso}.
Then the following conditions are equivalent.
\begin{enumerate}
  \item\label{it:11} $(A, \vartriangleright, \vartriangleleft)$ is an anti-dendriform algebra.
  \item\label{it:12} $(A, \vartriangleright, \vartriangleleft)$ is an associative admissible algebra,
  that is, $(A,\cdot)$ is an associative algebra, and for all $x, y, z\in A$, the following equations hold:
  \begin{equation}
 x\vartriangleright(y\vartriangleright z)=-(x\cdot y)\vartriangleright z,\ (x\vartriangleleft y)\vartriangleleft z=-x\vartriangleleft(y\cdot z),\
  (x\vartriangleright y)\vartriangleleft z=x\vartriangleright (y\vartriangleleft z).\label{anti-dendriform algebra equivalent-1}
  \end{equation}
  \item \label{it:13} $(A, \vartriangleright, \vartriangleleft)$ is an associative admissible algebra, that is, $(A,\cdot)$ is an associative algebra, and $(A, -L_\vartriangleright, -R_\vartriangleleft)$ is a bimodule of $(A,\cdot)$.
\end{enumerate}
\end{pro}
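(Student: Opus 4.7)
The plan is to prove the three equivalences by going \eqref{it:11} $\Rightarrow$ \eqref{it:12} $\Rightarrow$ \eqref{it:13} $\Rightarrow$ \eqref{it:12} $\Rightarrow$ \eqref{it:11}, where the delicate step is deriving the ``missing'' compatibility in \eqref{it:12} $\Rightarrow$ \eqref{it:11}.

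For \eqref{it:11} $\Rightarrow$ \eqref{it:12}, associativity of $(A,\cdot)$ is already given by Proposition~\ref{property-1}\eqref{it:1}. The three identities in \eqref{anti-dendriform algebra equivalent-1} are then read directly off the chain of equalities \eqref{eq:defi:anti-dendriform algebras1}: taking the first equals the second yields $x\vartriangleright(y\vartriangleright z)=-(x\cdot y)\vartriangleright z$, the third equals the fourth yields $(x\vartriangleleft y)\vartriangleleft z=-x\vartriangleleft(y\cdot z)$, and the last identity is just \eqref{eq:defi:anti-dendriform algebras2}.

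For the equivalence \eqref{it:12} $\Leftrightarrow$ \eqref{it:13}, I would simply translate. Writing out the three bimodule axioms for $(A,-L_\vartriangleright,-R_\vartriangleleft)$ and cancelling the two minus signs on each side gives
\begin{align*}
x\vartriangleright(y\vartriangleright v) &= -(x\cdot y)\vartriangleright v, \\
(v\vartriangleleft x)\vartriangleleft y &= -v\vartriangleleft(x\cdot y), \\
(x\vartriangleright v)\vartriangleleft y &= x\vartriangleright(v\vartriangleleft y),
\end{align*}
which are exactly the three identities in \eqref{anti-dendriform algebra equivalent-1}. So this equivalence is a direct unpacking.

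The main obstacle is \eqref{it:12} $\Rightarrow$ \eqref{it:11}: here I must derive the extra equality $-(x\cdot y)\vartriangleright z=-x\vartriangleleft(y\cdot z)$, which is the only ingredient of \eqref{eq:defi:anti-dendriform algebras1} not supplied by \eqref{anti-dendriform algebra equivalent-1}. My strategy is to feed the three identities of \eqref{anti-dendriform algebra equivalent-1} into the associative-admissible identity \eqref{anti-admissible algebra} (which holds by Remark~\ref{asso} since $(A,\cdot)$ is assumed associative). Concretely, on the right-hand side of \eqref{anti-admissible algebra} I would substitute $x\vartriangleright(y\vartriangleright z)=-(x\cdot y)\vartriangleright z$ and $x\vartriangleleft(y\vartriangleleft z)=-(x\vartriangleleft y)\vartriangleleft z$; on the left-hand side I would rewrite $(x\vartriangleright y)\vartriangleleft z=x\vartriangleright(y\vartriangleleft z)$. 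After cancellation, everything collapses to $2\bigl((x\cdot y)\vartriangleright z - x\vartriangleleft(y\cdot z)\bigr)=0$, giving the needed equality in characteristic zero. Combined with \eqref{anti-dendriform algebra equivalent-1} this reproduces both \eqref{eq:defi:anti-dendriform algebras1} and \eqref{eq:defi:anti-dendriform algebras2}, completing the proof.
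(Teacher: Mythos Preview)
Your proposal is correct and follows essentially the same route as the paper: the paper handles \eqref{it:11} $\Leftrightarrow$ \eqref{it:12} in one line by invoking Eqs.~\eqref{anti-admissible algebra}, \eqref{eq:defi:anti-dendriform algebras1}, \eqref{eq:defi:anti-dendriform algebras2}, and then proves \eqref{it:12} $\Leftrightarrow$ \eqref{it:13} by the same direct unpacking of the bimodule axioms that you describe. Your treatment of \eqref{it:12} $\Rightarrow$ \eqref{it:11} simply makes explicit the computation the paper suppresses.

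One small slip to fix when you write it up: the substitution you cite on the right-hand side, ``$x\vartriangleleft(y\vartriangleleft z)=-(x\vartriangleleft y)\vartriangleleft z$'', is not one of the identities in \eqref{anti-dendriform algebra equivalent-1}. What you actually want is to group $x\vartriangleleft(y\vartriangleright z)+x\vartriangleleft(y\vartriangleleft z)=x\vartriangleleft(y\cdot z)$ on the right, and $(x\vartriangleright y)\vartriangleright z+(x\vartriangleleft y)\vartriangleright z=(x\cdot y)\vartriangleright z$ on the left; then use the first two identities of \eqref{anti-dendriform algebra equivalent-1} and the third to cancel $x\vartriangleright(y\vartriangleleft z)$. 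With that correction the collapse to $2\bigl((x\cdot y)\vartriangleright z - x\vartriangleleft(y\cdot z)\bigr)=0$ is exactly right.
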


\begin{proof}
(\ref{it:11}) $\Longleftrightarrow$ (\ref{it:12}). It follows from Eqs.~\eqref{anti-admissible algebra},~\eqref{eq:defi:anti-dendriform algebras1} and ~\eqref{eq:defi:anti-dendriform algebras2}.

(\ref{it:12}) $\Longleftrightarrow$ (\ref{it:13}).
Let $x,y,z\in A$. Then we have
%
\begin{align*}
   (-L_\vartriangleright)(x\cdot y)(z)=-L_\vartriangleright(x)(-L_\vartriangleright(y)z)\ \ &\Longleftrightarrow \ \ x\vartriangleright (y\vartriangleright z)=-(x\cdot y)\vartriangleright z, \\
  (-R_\vartriangleleft)(x\cdot y)(z)=-R_\vartriangleleft(y)(-R_\vartriangleleft(x)z)\ \ &\Longleftrightarrow \ \ (z\vartriangleleft x)\vartriangleleft y=-z\vartriangleleft(x\cdot y), \\
  (-L_\vartriangleright)(x)(-R_\vartriangleleft(y)z)=(-R_\vartriangleleft(y))(-L_\vartriangleright(x)z) \ \ &\Longleftrightarrow \ \ x\vartriangleright(z\vartriangleleft y)=(x\vartriangleright z)\vartriangleleft y.
\end{align*}
Hence Item~(\ref{it:12}) holds if and only if Item~(\ref{it:13}) holds.
\end{proof}

\begin{rmk}
Recall (\cite{B}) that a dendriform algebra $(A,\succ,\prec)$ is
an associative admissible algebra such that $(A, L_\succ,
R_\prec)$ is a bimodule of the associated associative algebra
$(A,\cdot)$. Therefore the notion of anti-dendriform algebras is
justified due to the equivalent characterization (3) above.
\end{rmk}

Suppose that $(A,\cdot)$ is an associative algebra. Let $V$ be a vector space and $l,r: A\rightarrow {\rm End}_{\mathbb F}(V)$ be linear maps.
Then $(V, l, r)$ is a bimodule of $(A,\cdot)$ if and only if there is an associative algebra structure on the direct sum $A\oplus V$ of vector spaces  with the following bilinear operation, still denoted by
$\cdot$:
$$(x,u)\cdot(y,v)=(x\cdot y,l(x)v+r(y)u),\quad \forall x,y\in A,u,v\in V.$$
We denote this associative algebra by $A\ltimes_{l,r}V$.

\begin{cor}\label{cor:anti-d} Let $A$ be a vector space with two bilinear operations $\vartriangleright,\vartriangleleft: A\otimes A\rightarrow A$.
Then on the direct sum $\hat A:=A\oplus A$ of vector spaces,
the following bilinear operation
\begin{equation}\label{eq:asso-double}
(x,a)\cdot (y,b)=(x\vartriangleright y+x\vartriangleleft y,
-x\vartriangleright b-a\vartriangleleft y),\;\;\forall x,y,a,b\in
A,
\end{equation}
makes an associative algebra $(\hat A,\cdot)$ if and only if
$(A,\vartriangleright,\vartriangleleft)$ is an anti-dendriform algebra.
\end{cor}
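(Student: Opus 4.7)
The plan is to recognize that the displayed multiplication on $\hat A = A \oplus A$ is nothing but the semidirect product construction described just above the corollary, applied to the candidate bimodule $(A, -L_\vartriangleright, -R_\vartriangleleft)$, and then invoke Proposition~\ref{anti-dendriform algebra equivalent}.

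First I would unpack the right-hand side of Eq.~(\ref{eq:asso-double}). The first component is $x\vartriangleright y + x\vartriangleleft y = x\cdot y$ with $\cdot$ as in Eq.~(\ref{eq:asso}), and the second component is
\[
-x\vartriangleright b - a\vartriangleleft y = (-L_\vartriangleright)(x)(b) + (-R_\vartriangleleft)(y)(a).
\]
Thus, setting $V=A$, $l=-L_\vartriangleright$ and $r=-R_\vartriangleleft$, the algebra $(\hat A,\cdot)$ coincides exactly with the semidirect product algebra $A\ltimes_{l,r}V$ built from the bilinear operation $\cdot$ on $A$ defined by Eq.~(\ref{eq:asso}).

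Next I would apply the equivalence recalled just before the corollary: $A\ltimes_{l,r}V$ is an associative algebra if and only if $(A,\cdot)$ is an associative algebra and $(V,l,r)$ is a bimodule of $(A,\cdot)$. Hence $(\hat A,\cdot)$ is associative if and only if $(A,\cdot)$ is associative and $(A,-L_\vartriangleright,-R_\vartriangleleft)$ is a bimodule of $(A,\cdot)$, which is precisely condition~(\ref{it:13}) of Proposition~\ref{anti-dendriform algebra equivalent}. By the equivalence (\ref{it:11})$\Leftrightarrow$(\ref{it:13}) in that proposition, this holds if and only if $(A,\vartriangleright,\vartriangleleft)$ is an anti-dendriform algebra, yielding the claim.

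There is no real obstacle: the content of the corollary is essentially a reformulation of the bimodule characterization of anti-dendriform algebras in terms of the associated double space. The only thing worth being careful about is the sign conventions when matching $-x\vartriangleright b - a\vartriangleleft y$ with $l(x)b + r(y)a$, and observing that the associativity of the first component is automatic once $(A,\cdot)$ is associative, so no independent verification beyond Proposition~\ref{anti-dendriform algebra equivalent} is needed.
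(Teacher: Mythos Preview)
Your proposal is correct and follows essentially the same approach as the paper's proof: both recognize the multiplication on $\hat A$ as the semidirect product $A\ltimes_{-L_\vartriangleright,-R_\vartriangleleft}A$, observe that its associativity is equivalent to $(A,\cdot)$ being associative together with $(A,-L_\vartriangleright,-R_\vartriangleleft)$ being a bimodule, and then invoke Proposition~\ref{anti-dendriform algebra equivalent}. Your write-up is simply a more detailed unpacking of what the paper states in one sentence.
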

\begin{proof}
It is clear that $(\hat A,\cdot)$ is an associative algebra if and only if $(A, \vartriangleright, \vartriangleleft)$ is an associative admissible algebra,
and $(A, -L_\vartriangleright, -R_\vartriangleleft)$ is a bimodule
of the associated associative algebra, which is equivalent to the
fact that $(A, \vartriangleright, \vartriangleleft)$ is an
anti-dendriform algebra by Proposition \ref{anti-dendriform
algebra equivalent}.
\end{proof}

\subsection{Anti-$\mathcal{O}$-operators and anti-Rota-Baxter operators}

\begin{defi}\label{anti-O}
Let $(A,\cdot)$ be an associative algebra and $(V, l, r)$ be a
bimodule. A linear map $T: V\rightarrow A$ is called an {\bf
anti-$\mathcal{O}$-operator} of $(A,\cdot)$ associated to $(V, l,
r)$ if the following equation holds: \begin{equation}T(u)\cdot
T(v)=-T\big(l(T(u))v+r(T(v))u\big), \quad \forall u,v\in
V.\end{equation} Furthermore, $T$ is called {\bf strong} if
\begin{equation}l(T(u)\cdot T(v))w=r(T(v)\cdot T(w))u, \quad \forall u, v,
w\in V.\end{equation} In particular, an
anti-$\mathcal{O}$-operator $T$ of $(A,\cdot)$ associated to the
bimodule $(A,L_\cdot,R_\cdot)$ is called an {\bf anti-Rota-Baxter
operator}, that is, $T:A\rightarrow A$ is a linear map satisfying
\begin{equation}\label{eq:ANTIRBO}T(x)\cdot T(y)=-T\big(T(x)\cdot y+x\cdot T(y)\big), \quad
\forall x,y\in A.\end{equation} An anti-Rota-Baxter operator $T$
is called {\bf strong} if $T$ satisfies \begin{equation}T(x)\cdot
T(y)\cdot z=x\cdot T(y)\cdot T(z), \quad \forall x, y, z\in
A.\end{equation} In these cases, we also call $(A,T)$ an {\bf
anti-Rota-Baxter algebra} and a {\bf strong anti-Rota-Baxter
algebra} respectively.
\end{defi}

\begin{rmk}
Let $(A,\cdot)$ be an associative algebra and $(V,l,r)$ be a
bimodule. Recall that a linear map $T:V\rightarrow A$ is called an
{\bf $\mathcal{O}$-operator} of $(A,\cdot)$ associated to the
bimodule $(V,l,r)$ if $T$ satisfies
$$T(u)\cdot T(v)=T\big(l(T(u))v+r(T(v))u\big), \quad \forall u,v\in V.$$
The notion of $\mathcal O$-operators was introduced in \cite{BGN}
(also appeared independently in \cite{U}) as a natural
generalization of Rota-Baxter operators, which correspond to the
solutions of associative Yang-Baxter equations in $(A,\cdot)$
under certain conditions. The notion of anti-$\mathcal
O$-operators is justified due to the comparison between them.
\end{rmk}


\begin{rmk}
Let $(A,\cdot)$ be an associative algebra and $(V, l, r)$ be a
bimodule. A linear map $D: A\rightarrow V$ is called an {\bf
anti-1-cocycle} of $(A,\cdot)$ associated to $(V, l, r)$ if the
following equation holds:
$$D(x\cdot y)=-\big(l(x)D(y)+r(y)D(x)\big),\quad \forall x,y\in A.$$
Obviously, an invertible linear map $T:V\rightarrow A$ is an
anti-$\mathcal O$-operator if and only if $T^{-1}$ is an
anti-1-cocycle.
\end{rmk}

\begin{pro}\label{anti}
Let $(A,\cdot)$ be an associative algebra and $(V,l,r)$ be a
bimodule.
 Suppose that $T: V\rightarrow A$ is an anti-$\mathcal{O}$-operator of $(A,\cdot)$ associated to $(V, l,
 r)$.
  Define two bilinear operations $\vartriangleright,\vartriangleleft$ on $V$  respectively as
  \begin{align}\label{anti-O1}
  u\vartriangleright v=-l(T(u))v,\quad u\vartriangleleft v=-r(T(v))u,\quad \forall u,v\in V.
  \end{align}
  Then the following conclusions hold.
  \begin{enumerate}
 \item\label{it:31} For all $u,v,w\in V$, the following equations hold:
  \begin{equation}\label{anti-O2}
  u\vartriangleright(v\vartriangleright w)=-(u\cdot v)\vartriangleright w,\ (u\vartriangleleft v)\vartriangleleft w=-u\vartriangleleft(v\cdot w), \ (u\vartriangleright v)\vartriangleleft w=u\vartriangleright (v\vartriangleleft
  w),
  \end{equation}
where $u \cdot v=u\vartriangleright v +u\vartriangleleft v$.

 \item\label{it:32} $(V, \vartriangleright, \vartriangleleft)$ is an
anti-dendriform algebra if and only if $T$ is strong. In this
case, $T$ is a homomorphism of associative algebras from the
associated associative algebra $(V,\cdot)$ to $(A,\cdot)$.
Furthermore, there is an induced anti-dendriform algebra structure
on $T(V)=\{T(u)~|~u\in V\}\subseteq A$ given by
\begin{align}\label{induce}
T(u)\vartriangleright T(v)=T(u\vartriangleright v),\quad
T(u)\vartriangleleft T(v)=T(u\vartriangleleft v),\quad \forall
u,v\in V,
\end{align}
and $T$ is a homomorphism of anti-dendriform algebras.
\end{enumerate}
\end{pro}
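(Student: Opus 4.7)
My approach has three pieces. For Item (\ref{it:31}), the plan is direct verification. The central observation is obtained by applying $T$ to the definition $u\cdot v = u\vartriangleright v + u\vartriangleleft v = -l(T(u))v - r(T(v))u$: the anti-$\mathcal O$-operator condition immediately gives the multiplicative identity
\[ T(u\cdot v)\;=\;-T\bigl(l(T(u))v + r(T(v))u\bigr)\;=\;T(u)\cdot T(v). \]
With this in hand, each of the three identities in (\ref{anti-O2}) drops out of one bimodule axiom: the left-action axiom yields
\[ u\vartriangleright(v\vartriangleright w)\;=\;l(T(u))l(T(v))w\;=\;l(T(u\cdot v))w\;=\;-(u\cdot v)\vartriangleright w; \]
the right-action axiom yields the analogue for $\vartriangleleft$; and the commutation $l(x)r(y)=r(y)l(x)$ gives the third identity. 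The multiplicativity above already displays $T$ as a homomorphism of multiplications from $(V,\cdot)$ to $(A,\cdot)$, pending associativity of the source.

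For Item (\ref{it:32}), I would translate the strong condition into the language of $\vartriangleright$ and $\vartriangleleft$: using the multiplicativity again, strongness becomes exactly the identity
\[ (u\cdot v)\vartriangleright w \;=\; u\vartriangleleft(v\cdot w),\qquad \forall u,v,w\in V. \]
Combined with the three identities of (\ref{anti-O2}), this fills in the four-term chain (\ref{eq:defi:anti-dendriform algebras1}) and, together with (\ref{eq:defi:anti-dendriform algebras2}) (which is identical to the third identity of (\ref{anti-O2})), yields exactly the anti-dendriform axioms. Conversely, (\ref{eq:defi:anti-dendriform algebras1}) contains this equality as its middle equation, so anti-dendriform implies strong. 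Once $(V,\vartriangleright,\vartriangleleft)$ is anti-dendriform, $(V,\cdot)$ is associative by Proposition \ref{property-1}(\ref{it:1}), and the multiplicativity above then promotes $T$ to an algebra homomorphism.

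It remains to transport the structure to $T(V)$ via (\ref{induce}), which I expect to be the main obstacle since $T$ may fail to be injective and the definition uses a choice of preimage. The plan is to verify directly that $T(u\vartriangleright v)$ and $T(u\vartriangleleft v)$ depend only on $T(u)$ and $T(v)$; by linearity it suffices to show they vanish whenever $u$ or $v$ lies in $\ker T$. If $T(u)=0$, then $u\vartriangleright v = -l(T(u))v = 0$ directly, and the anti-$\mathcal O$ relation specializes to $T(r(T(v))u) = -T(u)\cdot T(v) = 0$, giving $T(u\vartriangleleft v)=0$; the case $T(v)=0$ is symmetric. The operations therefore descend to $T(V)$, the anti-dendriform axioms pass through to the quotient, and (\ref{induce}) makes $T\colon V\to T(V)$ a surjective homomorphism of anti-dendriform algebras by construction. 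Beyond this descent, every other step is an unwinding of the definitions plus one application of a bimodule axiom, the only care being to keep the signs straight across the two equivalences.
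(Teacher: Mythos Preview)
Your proposal is correct and follows essentially the same route as the paper. The paper verifies one of the three identities in Item~(\ref{it:31}) by the same mechanism (anti-$\mathcal O$-operator relation plus one bimodule axiom) and declares the others similar; for Item~(\ref{it:32}) it likewise observes that, given~(\ref{anti-O2}), the only missing link in~(\ref{eq:defi:anti-dendriform algebras1}) is $u\vartriangleright(v\vartriangleright w)=(u\vartriangleleft v)\vartriangleleft w$, which unwinds to $l(T(u)\cdot T(v))w=r(T(v)\cdot T(w))u$, i.e.\ strongness---this is your identity $(u\cdot v)\vartriangleright w=u\vartriangleleft(v\cdot w)$ after multiplying by $-1$. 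Your explicit well-definedness check for the induced operations on $T(V)$ (using $\ker T$ and the anti-$\mathcal O$ relation) is a detail the paper suppresses under ``the other results follow immediately,'' but it is not a different argument.
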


\begin{proof}
(\ref{it:31}). Let $u,v,w\in V$. Then we have
\begin{eqnarray*}
  -u\vartriangleleft(v\cdot w)&=&-u\vartriangleleft(v\vartriangleright w+v\vartriangleleft w)=-r\bigg(T\big(l(T(v))w\big)\bigg)u-r\bigg(T\big(r(T(w))v\big)\bigg)u \\
                   &=&
                   r\big(T(v)\cdot T(w)\big)u=r(T(w))(r(T(v))u)=(u\vartriangleleft v)\vartriangleleft w.
\end{eqnarray*}
Similarly, we have $$u\vartriangleright(v\vartriangleright
w)=-(u\cdot v)\vartriangleright w,\quad (u\vartriangleright
v)\vartriangleleft w=u\vartriangleright (v\vartriangleleft w).$$
Thus Eq.~\eqref{anti-O2} holds.

(\ref{it:32}). From Item~(\ref{it:31}) and
Definition~\ref{defi:anti-dendriform algebras}, $(V,
\vartriangleright, \vartriangleleft)$ is an anti-dendriform
algebra if and only if the following equation holds:
$$l\big(T(u)\cdot T(v)\big)w=u\vartriangleright(v\vartriangleright w)=(u\vartriangleleft v)\vartriangleleft w=r\big(T(v)\cdot T(w)\big)u,\quad \forall u,v,w\in V,
$$
that is, $T$ is a strong anti-$\mathcal{O}$-operator. The other
results follow immediately.
 \delete{In this case,
\begin{align*}
T(u\cdot v)&=T(u\vartriangleright v+u\vartriangleleft v)=T(u\vartriangleright v)+T(u\vartriangleleft v) \\
                    &=-T\big(l(T(u))v\big)-T\big(r(T(v))u\big)\\
                    &=T(u)T(v),\ \ \forall u,v\in V,
\end{align*}
that shows $T$ is a homomorphism of associative algebras from the associated associative algebra $(V,\cdot)$ to $A.$

Furthermore, for any $u,v,w\in V,$ we can get
\begin{align*}
  T(u)\vartriangleright (T(v)\vartriangleright T(w))&=T(u\vartriangleright (v\vartriangleright w))  \\
                            &=-T(u\vartriangleright v)\vartriangleright T(w)-T(u\vartriangleleft v)\vartriangleright T(w) \\
                            &=-(T(u)\vartriangleright T(v))\vartriangleright T(w)-(T(u)\vartriangleleft T(v))\vartriangleright T(w) \\
                            &=-(T(u)\vartriangleright T(v)+T(u)\vartriangleleft T(v))\vartriangleright T(w).
\end{align*}
Similarly, we can obtain
\begin{align*}
 (T(u)\vartriangleleft T(v))\vartriangleleft T(w)&=-T(u)\vartriangleleft (T(v)\vartriangleright T(w)+T(v)\vartriangleleft T(w)),\\
 (T(u)\vartriangleright T(v))\vartriangleleft T(w)&=T(u)\vartriangleright (T(v)\vartriangleleft T(w)),\\
(T(u)\vartriangleright T(v))\vartriangleright T(w)&=T(u)\vartriangleleft (T(v)\vartriangleleft T(w)),
\end{align*}
which imply that $T(V)$ is an anti-dendriform algebra. This
completes the proof.}
\end{proof}

\begin{cor}
Let $(A,\cdot)$ be an associative algebra and $P$ be a strong
anti-Rota-Baxter operator. Then the triple
$(A,\vartriangleright,\vartriangleleft)$ is an anti-dendriform
algebra, where \begin{equation}\label{eq:cons} x\vartriangleright
y=-P(x)\cdot y,\ \ x\vartriangleleft y=-x\cdot P(y), \ \ \forall
x,y\in A.\end{equation} Conversely, if $P:A\rightarrow A$ is a
linear transformation on an associative algebra $(A,\cdot)$ such
that Eq.~\eqref{eq:cons} defines an anti-dendriform algebra, then
$P$ satisfies
\begin{equation}P(x)P(y)z=-P\big(P(x)y+xP(y)\big)z=-xP\big(P(y)z+yP(z)\big)=xP(y)P(z),\
\ \forall x,y,z\in A.\end{equation}In particular, if $${\rm
Ann}^L_A(A)=\{x\in A~|~x\cdot y=0,\forall y\in A\}=0,\;\;{or}\;\;
{\rm Ann}^R_A(A)=\{x\in A~|~y\cdot x=0,\forall y\in A\}=0,$$ then
$P$ is a strong anti-Rota-Baxter operator.
\end{cor}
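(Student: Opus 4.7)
My plan is to reduce the entire statement to Proposition~\ref{anti} applied to the regular bimodule $(A,L_\cdot,R_\cdot)$, and then read off the converse by expanding the anti-dendriform identities.

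For the forward implication, I take $V=A$, $l=L_\cdot$, $r=R_\cdot$, and $T=P$. Under this identification, an anti-Rota-Baxter operator is exactly an anti-$\mathcal{O}$-operator, and the strongness condition in Definition~\ref{anti-O} for the regular bimodule becomes $L_\cdot(P(x)\cdot P(y))z = R_\cdot(P(y)\cdot P(z))x$, i.e.\ $P(x)\cdot P(y)\cdot z = x\cdot P(y)\cdot P(z)$, which matches the definition of a strong anti-Rota-Baxter operator. The induced operations from Eq.~\eqref{anti-O1} become $x\vartriangleright y = -L_\cdot(P(x))y = -P(x)\cdot y$ and $x\vartriangleleft y = -R_\cdot(P(y))x = -x\cdot P(y)$, matching Eq.~\eqref{eq:cons} exactly. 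Hence Proposition~\ref{anti}(\ref{it:32}) yields the anti-dendriform algebra structure.

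For the converse, I assume $(A,\vartriangleright,\vartriangleleft)$ defined by Eq.~\eqref{eq:cons} is anti-dendriform. Writing $\ast$ for the associated product $x\ast y = x\vartriangleright y + x\vartriangleleft y = -\bigl(P(x)\cdot y + x\cdot P(y)\bigr)$, I expand each of the four expressions appearing in Eq.~\eqref{eq:defi:anti-dendriform algebras1}:
\begin{align*}
x\vartriangleright(y\vartriangleright z) &= P(x)\cdot P(y)\cdot z,\\
-(x\ast y)\vartriangleright z &= -P\bigl(P(x)\cdot y + x\cdot P(y)\bigr)\cdot z,\\
-x\vartriangleleft(y\ast z) &= -x\cdot P\bigl(P(y)\cdot z + y\cdot P(z)\bigr),\\
(x\vartriangleleft y)\vartriangleleft z &= x\cdot P(y)\cdot P(z).
\end{align*}
Equating these four quantities gives precisely the chain of identities stated in the corollary. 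The identity~\eqref{eq:defi:anti-dendriform algebras2} reduces to the associativity of $\cdot$ and so imposes no further constraint on $P$.

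For the ``in particular'' statement, the first equality $P(x)\cdot P(y)\cdot z = -P\bigl(P(x)\cdot y + x\cdot P(y)\bigr)\cdot z$ rewrites as $\bigl(P(x)\cdot P(y)+P(P(x)\cdot y + x\cdot P(y))\bigr)\cdot z = 0$ for all $z$, so ${\rm Ann}^L_A(A)=0$ forces the anti-Rota-Baxter identity~\eqref{eq:ANTIRBO}; symmetrically, the equality $-x\cdot P\bigl(P(y)\cdot z + y\cdot P(z)\bigr) = x\cdot P(y)\cdot P(z)$ together with ${\rm Ann}^R_A(A)=0$ yields the same identity. In either case, the outer equality $P(x)\cdot P(y)\cdot z = x\cdot P(y)\cdot P(z)$ is the strongness condition, so $P$ is a strong anti-Rota-Baxter operator. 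I expect no real obstacle here: the statement is essentially a direct specialization of Proposition~\ref{anti}, and the main care required is only bookkeeping between the original associative product $\cdot$ on $A$ and the induced product $\ast$ of the candidate anti-dendriform structure.
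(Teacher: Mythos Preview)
Your proposal is correct and takes essentially the same approach as the paper: the forward direction specializes Proposition~\ref{anti} to the regular bimodule $(V,l,r)=(A,L_\cdot,R_\cdot)$, and the converse unpacks the defining identities of an anti-dendriform algebra (Definition~\ref{defi:anti-dendriform algebras}) for the operations~\eqref{eq:cons}. Your write-up is simply more detailed than the paper's two-line proof, and your use of a separate symbol $\ast$ for the induced product is a sensible precaution against confusing it with the ambient product $\cdot$ on $A$.
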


\begin{proof}
The first half part follows from Proposition~\ref{anti} by letting
$(V,l,r)=(A,L_{\cdot},R_{\cdot})$. The second half part follows from
Definition~\ref{defi:anti-dendriform algebras}.
\end{proof}

\begin{ex}\label{two-dim ass algebra}
Let $(A,\cdot)$ be a complex associative  algebra with a basis $\{e_1,e_2\}$
whose non-zero products are given by
$$e_1\cdot e_1=e_1,\ \ e_1\cdot e_2=e_2.$$
Suppose that $P:A\rightarrow A$ is a linear map whose corresponding matrix
is given by $\left( \begin{matrix}\alpha_{11} &\alpha_{12}\cr
\alpha_{21}&\alpha_{22}\cr\end{matrix}\right)$ under the basis $\{e_1,e_2\}$.
Then by Eq.~(\ref{eq:ANTIRBO}), $P$ is an anti-Rota-Baxter
operator on $(A,\cdot)$ if and only if
$$\alpha_{11}=\alpha_{12}=\alpha_{22}=0.$$
Therefore the set of all anti-Rota-Baxter operators on $(A,\cdot)$ is $\{ P=\left(
\begin{matrix}0 &0 \cr
\gamma&0\cr\end{matrix}\right)|\gamma\in \mathbb C\}$. Moreover, any
anti-Rota-Baxter operator on $(A,\cdot)$ is strong. Hence by
Eq.~(\ref{eq:cons}), we obtain the following anti-dendriform
algebras whose non-zero products are given by
$$e_1\vartriangleleft e_1=-\gamma e_2,\;\; \gamma\in
\mathbb C.$$
It is straightforward to show that if $\gamma=0$, then it is isomorphic to $(B1)$ and
if $\gamma\ne 0$, then it is isomorphic to $(B2)$, where the notations are given in Example~\ref{ex:2-dim}.

\end{ex}

\delete{
\begin{pro}
Let $A$ be an associative algebra and $(A,l,r)$ be an
$A$-bimodule. Define two bilinear operations:
$$x\vartriangleright y=-l(x)y,\quad x\vartriangleleft y=-r(y) x,\quad \forall x,y\in A.$$
If $Ann_A(A)=\{x\in A~|~l(x) y=0,\forall y\in A\}=0,$ then $(A,\vartriangleright,\vartriangleleft)$ is an anti-dendriform algebra if and only if
$$xy=x\vartriangleright y+x\vartriangleleft y,\quad x,y\in A.$$
\end{pro}
\begin{proof}
$(\Rightarrow)$ Suppose that $(A,\vartriangleright,\vartriangleleft)$ is an anti-dendriform algebra. Note that for any $x,y,z\in A,$
\begin{align*}
l(xy)z=l(x)l(y)z=l(x)(-y\vartriangleright z)=x\vartriangleright(y\vartriangleright z)=
-(x\vartriangleright y+x\vartriangleleft y)\vartriangleright z=l(x\vartriangleright y+x\vartriangleleft y)z.
\end{align*}
So $xy=x\vartriangleright y+x\vartriangleleft y$ since $Ann_A(A)=0.$

$(\Leftarrow)$ It is easy to see that identity map $Id:A\longrightarrow A$ is an anti-$\mathcal{O}$-operator associated to $A$-bimodule $(A,l,r).$
So,  $$x\vartriangleright(y\vartriangleright z)=-(x\cdot y)\vartriangleright z,\ \ (x\vartriangleleft y)\vartriangleleft z=-x\vartriangleleft(y\cdot z),\ \ (x\vartriangleright y)\vartriangleleft z=x\vartriangleright (y\vartriangleleft z)$$
by Proposition \ref{anti}.
Moreover, $(A,\vartriangleright,\vartriangleleft)$ is an associative admissible algebra since $A$ is an associative algebra and
$xy=x\vartriangleright y+x\vartriangleleft y.$ Therefore $(A,\vartriangleright,\vartriangleleft)$ is an anti-dendriform algebra by Proposition \ref{anti-dendriform algebra equivalent}.
\end{proof}
}

\begin{lem}\label{automatically}
An invertible anti-$\mathcal{O}$-operator of an associative
algebra is automatically strong.
\end{lem}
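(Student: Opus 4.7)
The plan is to exploit invertibility by passing to $S := T^{-1}: A \to V$, which, as noted in the remark preceding Proposition~\ref{anti}, is an anti-1-cocycle:
$$S(x \cdot y) = -l(x) S(y) - r(y) S(x), \quad \forall x, y \in A.$$
Under the substitutions $x = T(u)$, $y = T(v)$, $z = T(w)$---which sweep out all of $A$ by invertibility---the strong condition $l(T(u) \cdot T(v))w = r(T(v) \cdot T(w))u$ is equivalent to
$$l(x \cdot y)\, S(z) = r(y \cdot z)\, S(x), \quad \forall x,y,z\in A.$$
So the entire problem reduces to establishing this single identity.

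First, I would apply the anti-1-cocycle identity twice to expand each side of the associator equation for $(A,\cdot)$. Using the bimodule relations $l(x \cdot y) = l(x)l(y)$, $r(x \cdot y) = r(y)r(x)$, and $l(x)r(z) = r(z)l(x)$, a direct computation gives
$$S((x \cdot y) \cdot z) = -l(x)l(y)S(z) + l(x)r(z)S(y) + r(y\cdot z)S(x),$$
$$S(x \cdot (y \cdot z)) = \phantom{-}l(x)l(y)S(z) + l(x)r(z)S(y) - r(y\cdot z)S(x).$$
The $l(x)r(z)S(y)$ terms cancel upon subtraction while the other two pairs reinforce, yielding
$$S((x\cdot y)\cdot z) - S(x\cdot(y\cdot z)) = -2\,l(x\cdot y)S(z) + 2\,r(y\cdot z)S(x).$$

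Second, associativity of $(A,\cdot)$ forces the left-hand side to vanish, giving $l(x\cdot y)S(z) = r(y\cdot z)S(x)$, which is precisely the translated strong condition. Substituting $S = T^{-1}$ back in completes the proof. The argument is essentially a bookkeeping exercise; the only mild pitfalls are sign tracking and respecting the order reversal in $r(x \cdot y) = r(y)r(x)$, but no genuine obstacle arises, since everything is forced once one commits to working with $T^{-1}$ and invokes associativity of $(A,\cdot)$.
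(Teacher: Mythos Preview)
Your proof is correct and cleanly executed; the computation of $S((x\cdot y)\cdot z)$ and $S(x\cdot(y\cdot z))$ checks out line by line, and the cancellation upon subtraction yields exactly the translated strong condition. (The factor of $2$ is harmless given the paper's standing characteristic-zero hypothesis.)

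However, your route differs from the paper's. The paper does not pass to $S=T^{-1}$; instead it defines the operations $\vartriangleright,\vartriangleleft$ on $V$ by Eq.~\eqref{anti-O1}, expands both sides of the associativity relation $(T(u)\cdot_A T(v))\cdot_A T(w)=T(u)\cdot_A(T(v)\cdot_A T(w))$ through $T$, cancels $T$ by invertibility, and then invokes Proposition~\ref{anti} (which already supplies Eq.~\eqref{anti-O2}) to reduce the resulting identity to $u\vartriangleright(v\vartriangleright w)=(u\vartriangleleft v)\vartriangleleft w$. This makes $(V,\vartriangleright,\vartriangleleft)$ an anti-dendriform algebra, and Proposition~\ref{anti} again gives strongness. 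Your argument is more self-contained---it never mentions anti-dendriform structure and uses only the bimodule axioms plus the anti-1-cocycle identity---while the paper's argument ties the lemma to the ambient theory and makes explicit the anti-dendriform structure on $V$ that is needed downstream in Theorem~\ref{anti-dendriform-invertible anti-O}. Both approaches ultimately hinge on the same factor-of-$2$ cancellation.
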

\begin{proof}
Let $T:V\rightarrow A$ be an invertible
anti-$\mathcal{O}$-operator of an associative algebra
$(A,\cdot_A)$ associated to a bimodule $(V, l, r)$. Define two
bilinear operations $\vartriangleright,\vartriangleleft$ on $V$
respectively by Eq.~\eqref{anti-O1}.
 Define a bilinear operation $\cdot_V$ on $V$ by
$$u\cdot_V v=u\vartriangleright v+u\vartriangleleft v,\;\;\forall
u,v\in V.$$ Let $u,v,w\in V$. Then we have
\begin{align*}
  \big(T(u)\cdot_A T(v)\big)\cdot_A T(w)&=-T\big(l(T(u))v+r(T(v))u\big)\cdot_A T(w) \\
                        &=T\bigg(l\big(T(l(T(u))v+r(T(v))u)\big)w+r(T(w))(l(T(u))v+r(T(v))u)\bigg) \\
                        &=-T\big((l(T(u))v+r(T(v))u)\vartriangleright w+(l(T(u))v+r(T(v))u)\vartriangleleft w\big)\\
                        &=T\bigg((u\cdot_V v)\vartriangleright w+(u\vartriangleright v)\vartriangleleft w+(u\vartriangleleft v)\vartriangleleft
                        w\bigg).
\end{align*}
Similarly, we have
$$T(u)\cdot_A \big(T(v)\cdot_A T(w)\big)=T\bigg(u\vartriangleright(v\vartriangleright w)+u\vartriangleright(v\vartriangleleft w)+u\vartriangleleft(v\cdot_V w)\bigg).$$
Since $(A,\cdot_A)$ is an associative algebra and $T$ is
invertible, we have
$$(u\cdot_V v)\vartriangleright w+(u\vartriangleright v)\vartriangleleft w+(u\vartriangleleft v)\vartriangleleft w=u\vartriangleright(v\vartriangleright w)+u\vartriangleright(v\vartriangleleft w)+u\vartriangleleft(v\cdot_V w).$$
By Proposition \ref{anti}, Eq.~\eqref{anti-O2} holds and hence
$u\vartriangleright(v\vartriangleright w)=(u\vartriangleleft
v)\vartriangleleft w$. Therefore $(V,
\vartriangleright,\vartriangleleft)$ is an anti-dendriform algebra
and thus by Proposition \ref{anti} again,  $T$ is strong.
\end{proof}

\begin{thm}\label{anti-dendriform-invertible anti-O}
Let $(A,\cdot)$ be an associative algebra. Then there is a
compatible anti-dendriform algebra structure on $(A,\cdot)$ if and
only if there exists an invertible anti-$\mathcal{O}$-operator of
$(A,\cdot)$.
\end{thm}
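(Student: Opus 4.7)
The plan is to prove both directions using the tools already established, with the identity map playing the distinguished role on one side and the transport/induction structure on the other.

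For the forward direction, suppose $(A,\vartriangleright,\vartriangleleft)$ is a compatible anti-dendriform algebra on $(A,\cdot)$. By Proposition~\ref{anti-dendriform algebra equivalent}, the triple $(A,-L_\vartriangleright,-R_\vartriangleleft)$ is a bimodule of $(A,\cdot)$. I would then verify that the identity map $\mathrm{id}_A:A\to A$ is itself an anti-$\mathcal{O}$-operator associated to this bimodule: indeed, for $x,y\in A$ the defining identity reduces to $x\cdot y = -(-L_\vartriangleright(x)y - R_\vartriangleleft(y)x\cdot(-1)) = x\vartriangleright y + x\vartriangleleft y$, which holds by Eq.~\eqref{eq:asso}. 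Since $\mathrm{id}_A$ is trivially invertible, this produces the required invertible anti-$\mathcal{O}$-operator.

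For the reverse direction, let $T:V\to A$ be an invertible anti-$\mathcal{O}$-operator associated to some bimodule $(V,l,r)$. First apply Lemma~\ref{automatically} to conclude that $T$ is automatically strong. Then by Proposition~\ref{anti}, the operations $\vartriangleright,\vartriangleleft$ on $V$ defined by Eq.~\eqref{anti-O1} give $V$ the structure of an anti-dendriform algebra, and moreover $T$ is a homomorphism of associative algebras from the associated $(V,\cdot_V)$ to $(A,\cdot)$. Using invertibility of $T$, I would transport these operations to $A$ by setting
\begin{equation*}
x\vartriangleright' y = T\bigl(T^{-1}(x)\vartriangleright T^{-1}(y)\bigr),\qquad x\vartriangleleft' y = T\bigl(T^{-1}(x)\vartriangleleft T^{-1}(y)\bigr),
\end{equation*}
for all $x,y\in A$; equivalently, this is the induced structure on $T(V)=A$ given by Eq.~\eqref{induce}.

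Finally, I would confirm compatibility: since $T$ is an isomorphism of anti-dendriform algebras, $(A,\vartriangleright',\vartriangleleft')$ is an anti-dendriform algebra, and its associated associative product is
\begin{equation*}
x\vartriangleright' y + x\vartriangleleft' y = T\bigl(T^{-1}(x)\cdot_V T^{-1}(y)\bigr) = T(T^{-1}(x))\cdot T(T^{-1}(y)) = x\cdot y,
\end{equation*}
so the structure is compatible with $(A,\cdot)$. There is no genuine obstacle here; both directions are essentially unpacking definitions, with Lemma~\ref{automatically} and Proposition~\ref{anti} doing the real work, so the only care needed is to identify the correct bimodule (namely $(A,-L_\vartriangleright,-R_\vartriangleleft)$) in the forward direction.
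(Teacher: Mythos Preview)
Your proposal is correct and follows essentially the same route as the paper: the identity map on $A$ serves as the invertible anti-$\mathcal{O}$-operator for the bimodule $(A,-L_\vartriangleright,-R_\vartriangleleft)$ in the forward direction, and Lemma~\ref{automatically} together with Proposition~\ref{anti} (plus transport along the bijection $T$) handles the converse. The only blemish is the garbled expression ``$-R_\vartriangleleft(y)x\cdot(-1)$'' in your forward computation; it should read $-\bigl((-L_\vartriangleright)(x)y+(-R_\vartriangleleft)(y)x\bigr)=x\vartriangleright y+x\vartriangleleft y$, but this is a typographical slip rather than a gap.
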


\begin{proof}
Suppose that $(A,\vartriangleright,\vartriangleleft)$ is a compatible
anti-dendriform algebra structure on $(A,\cdot)$. Then
$$x\cdot
y=x\vartriangleright y+x\vartriangleleft y=-(-L_\vartriangleright
(x) y-R_\vartriangleleft (y)x),\;\;\forall x,y\in A.$$ Hence the
identity map ${\rm Id}:A\rightarrow A$ is an invertible
anti-$\mathcal{O}$-operator of $(A,\cdot)$ associated to the
bimodule $(A,-L_\vartriangleright,-R_\vartriangleleft)$.

Conversely, suppose that $T:V\rightarrow A$ is an invertible
anti-$\mathcal{O}$-operator of $(A,\cdot)$ associated to a
bimodule $(V, l, r)$ of $(A,\cdot)$. Then by Lemma
\ref{automatically} and Proposition \ref{anti}, there exist
anti-dendriform algebra structures on $V$ and $T(V)=A$ defined by
Eqs.~\eqref{anti-O1} and ~\eqref{induce} respectively. Let $x,y\in
A$. Then there exist $u,v\in V$ such that $x=T(u),y=T(v)$. Hence
we have
\begin{align*}
x\cdot y&=T(u)\cdot T(v)=-T(l(T(u))v+r(T(v))u)=T(u\vartriangleright v+u\vartriangleleft v)\\
&=T(u)\vartriangleright T(v)+T(u)\vartriangleleft T(v)=x\vartriangleright y+x\vartriangleleft y.
\end{align*}
So $(A,\vartriangleright,\vartriangleleft)$ is a compatible
anti-dendriform algebra structure on $(A,\cdot)$.
\end{proof}

\begin{pro}\label{embedding}
Let $(A,\cdot)$ be an associative algebra and $(V, l, r)$ be a
bimodule. Suppose that $T:V\longrightarrow A$ is a linear map.
Then $T$ is an anti-$\mathcal{O}$-operator of $(A,\cdot)$
associated to $(V, l, r)$ if and only if the linear map
$$\hat T: A\ltimes_{l,r} V\longrightarrow A\ltimes_{l,r} V,\quad (x,u)\longmapsto
(T(u),0),$$ is an anti-Rota-Baxter operator on the associative
algebra $A\ltimes_{l,r}V$.
\end{pro}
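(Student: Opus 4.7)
The plan is to compute both sides of the anti-Rota-Baxter identity for $\hat T$ explicitly on a general pair of elements of $A\ltimes_{l,r}V$ and observe that the identity reduces exactly to the anti-$\mathcal{O}$-operator condition on $T$, with the $A$-components $x$ and $y$ playing no role.

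First I would take arbitrary $(x,u),(y,v)\in A\ltimes_{l,r}V$. Using the definition of $\hat T$ and the multiplication rule $(x,u)\cdot (y,v)=(x\cdot y, l(x)v+r(y)u)$ on the semidirect product, the left hand side is
\begin{equation*}
\hat T(x,u)\cdot \hat T(y,v)=(T(u),0)\cdot (T(v),0)=\bigl(T(u)\cdot T(v),\,0\bigr),
\end{equation*}
where the $V$-component vanishes because the second coordinates of $(T(u),0)$ and $(T(v),0)$ are zero. For the right hand side I would compute the two mixed products
\begin{equation*}
\hat T(x,u)\cdot (y,v)=(T(u)\cdot y,\,l(T(u))v),\qquad (x,u)\cdot \hat T(y,v)=(x\cdot T(v),\,r(T(v))u),
\end{equation*}
add them and apply $\hat T$, which projects onto the first coordinate composed with $T$:
\begin{equation*}
-\hat T\bigl(\hat T(x,u)\cdot (y,v)+(x,u)\cdot \hat T(y,v)\bigr)=\bigl(-T(l(T(u))v+r(T(v))u),\,0\bigr).
\end{equation*}

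Comparing the two sides, the anti-Rota-Baxter identity for $\hat T$ on $A\ltimes_{l,r}V$ is equivalent to the scalar identity $T(u)\cdot T(v)=-T\bigl(l(T(u))v+r(T(v))u\bigr)$ for all $u,v\in V$, and crucially this holds for all $(x,u),(y,v)$ if and only if it holds for all $u,v$, since the components $x,y$ have completely dropped out. This is precisely the defining condition for $T$ to be an anti-$\mathcal{O}$-operator of $(A,\cdot)$ associated to $(V,l,r)$, which establishes both directions of the equivalence. There is no real obstacle here: the argument is a direct unfolding of the definitions, and the only point worth noting is that the zero second coordinate of the image of $\hat T$ makes all the bimodule action terms on the left hand side vanish, which is what forces the equation to reduce to its $V$-component alone.
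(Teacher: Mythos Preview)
Your proof is correct and follows essentially the same approach as the paper: both arguments compute $\hat T(x,u)\cdot\hat T(y,v)$, $\hat T(x,u)\cdot(y,v)$, and $(x,u)\cdot\hat T(y,v)$ explicitly, observe that applying $\hat T$ kills the $A$-components so that $x,y$ drop out, and conclude that the anti-Rota-Baxter identity for $\hat T$ is equivalent to the anti-$\mathcal{O}$-operator condition on $T$.
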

\begin{proof}
Let  $x,y\in A,u,v\in V$. Then we have
\begin{eqnarray*}
\hat T((x,u))\cdot \hat T((y,v))&=&(T(u),0)\cdot(T(v),0)=(T(u)\cdot T(v),0),\\
\hat T((x,u))\cdot(y,v)&=&(T(u),0)\cdot(y,v)=(T(u)\cdot y, l(T(u))v),\\
(x,u)\cdot \hat T ((y,v))&=&(x,u)\cdot (T(v),0)=(x\cdot T(v),
r(T(v))u).
\end{eqnarray*}
Hence $\hat T$ is an anti-Rota-Baxter operator on the associative
algebra $A\ltimes_{l,r}V$ if and only if
$$(T(u)\cdot T(v),0)=-\bigg(T\big(l(T(u))v+r(T(v))u\big),0\bigg),$$
that is, $T$ is an anti-$\mathcal{O}$-operator of $(A,\cdot)$
associated to $(V, l, r)$.
\end{proof}


\begin{cor}
Let $(A,\vartriangleright,\vartriangleleft)$ be an
anti-dendriform algebra and $(A,\cdot)$ be the associated
associative algebra. Set $\hat A=A\oplus A$ as the direct sum of
vector spaces. Define a bilinear operation $\cdot$ on $\hat A$ by
Eq.~\eqref{eq:asso-double} and a linear map $\widehat {\rm
Id}:\hat A\rightarrow \hat A$ by 
\begin{equation*}
\widehat {\rm Id}((x,y))=(y,0),\;\;\forall x,y\in A.
\end{equation*}
Then $\widehat {\rm Id}$ is an anti-Rota-Baxter operator on the
associative algebra $(\hat A,\cdot)$, that is, $(\hat A, \widehat
{\rm Id})$ is an anti-Rota-Baxter algebra.
\end{cor}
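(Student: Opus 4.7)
The plan is to reduce this corollary to a direct application of Proposition~\ref{embedding}, using the identity map as the anti-$\mathcal O$-operator furnished by Theorem~\ref{anti-dendriform-invertible anti-O}.

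First I would identify the associative algebra $(\hat A,\cdot)$ with a semidirect product. By Proposition~\ref{anti-dendriform algebra equivalent}, since $(A,\vartriangleright,\vartriangleleft)$ is an anti-dendriform algebra, the triple $(A,-L_\vartriangleright,-R_\vartriangleleft)$ is a bimodule of the associated associative algebra $(A,\cdot)$. Comparing the multiplication in Eq.~\eqref{eq:asso-double} with the definition of $A\ltimes_{l,r}V$ for $l=-L_\vartriangleright$ and $r=-R_\vartriangleleft$, one sees immediately that $(\hat A,\cdot)=A\ltimes_{-L_\vartriangleright,-R_\vartriangleleft}A$ as associative algebras. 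This is the structural identification that makes Proposition~\ref{embedding} applicable.

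Next I would recall from the proof of Theorem~\ref{anti-dendriform-invertible anti-O} that the identity map ${\rm Id}:A\rightarrow A$ is an invertible anti-$\mathcal O$-operator of $(A,\cdot)$ associated precisely to the bimodule $(A,-L_\vartriangleright,-R_\vartriangleleft)$, since $x\cdot y=-(-L_\vartriangleright(x)y-R_\vartriangleleft(y)x)$ for all $x,y\in A$. Applying Proposition~\ref{embedding} to $T={\rm Id}$, the induced map $\hat T:A\ltimes_{-L_\vartriangleright,-R_\vartriangleleft}A\rightarrow A\ltimes_{-L_\vartriangleright,-R_\vartriangleleft}A$ defined by $\hat T((x,y))=({\rm Id}(y),0)=(y,0)$ is an anti-Rota-Baxter operator on the semidirect product associative algebra. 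But this is exactly the map $\widehat{\rm Id}$ described in the statement, and the ambient algebra is exactly $(\hat A,\cdot)$, so the conclusion follows.

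No step here is really an obstacle since all the heavy lifting has already been done: the associativity of $(\hat A,\cdot)$ comes from Corollary~\ref{cor:anti-d}, the anti-$\mathcal O$-operator property of ${\rm Id}$ from Theorem~\ref{anti-dendriform-invertible anti-O}, and the transfer to an anti-Rota-Baxter operator on the semidirect product from Proposition~\ref{embedding}. The only thing that requires attention is to make sure the sign conventions and the bimodule $(A,-L_\vartriangleright,-R_\vartriangleleft)$ match up between Eq.~\eqref{eq:asso-double} and the general semidirect product formula; once this bookkeeping is checked, the result is immediate.
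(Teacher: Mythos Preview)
Your proposal is correct and follows essentially the same approach as the paper's own proof: identify $(\hat A,\cdot)$ with $A\ltimes_{-L_\vartriangleright,-R_\vartriangleleft}A$, note that ${\rm Id}$ is an anti-$\mathcal O$-operator for this bimodule (as in Theorem~\ref{anti-dendriform-invertible anti-O}), and then apply Proposition~\ref{embedding}. The paper's proof is just a more compressed version of exactly this argument.
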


\begin{proof}
By Corollary~\ref{cor:anti-d}, $(\hat A,\cdot)$ is an associative
algebra, which is exactly
$A\ltimes_{-L_\vartriangleright,-R_\vartriangleleft} A$. Since
${\rm Id}:A\rightarrow A$ is an anti-$\mathcal{O}$-operator of
$(A,\cdot)$ associated to the bimodule
$(A,-L_\vartriangleright,-R_\vartriangleleft)$, by
Proposition~\ref{embedding}, $\widehat {\rm Id}$ is an
anti-Rota-Baxter operator on the associative algebra $(\hat
A
,\cdot)$.
\end{proof}

\begin{rmk}
In general, $\widehat {\rm Id}$ is not a strong anti-Rota-Baxter
operator on the associative algebra $(\hat A,\cdot)$ and hence one
shows that there is not an anti-dendriform algebra structure on
$\hat A$ defined by Eq.~(\ref{eq:cons}). On the other hand, we
still define two bilinear operations
$\vartriangleright,\vartriangleleft$ on the vector subspace
$A'=\{(0,x)|x\in A\}\subset \hat A$ by
\begin{equation*}
(0,x)\vartriangleright (0,y)=-\widehat {\rm Id}((0,x))\cdot
(0,y)=(0, x\vartriangleright y),\;\;(0,x)\vartriangleleft
(0,y)=-(0,x)\cdot \widehat {\rm Id}((0,y))=(0, x\vartriangleleft y),
\end{equation*}
for all $x,y\in A$. Then it is straightforward to show that
$(A',\vartriangleright,\vartriangleleft)$ is an anti-dendriform
algebra. That is, there is still an anti-dendriform algebra
structure on the subspace $A'$ of $\hat A$ defined by the
anti-Rota-Baxter operator $\widehat {\rm Id}$ through
Eq.~(\ref{eq:cons}). Let $F:A\rightarrow A'$ be a linear map
defined by
\begin{equation*}
F(x)=(0,x),\;\;\forall x\in A.
\end{equation*}
Then $F$ is an isomorphism of anti-dendriform algebras from
$(A,\vartriangleright,\vartriangleleft)$ to
$(A',\vartriangleright,\vartriangleleft)$. Hence in the sense
above, the anti-dendriform algebra
$(A,\vartriangleright,\vartriangleleft)$ is ``embedded" into the
anti-Rota-Baxter algebra $(\hat A, \widehat {\rm Id})$. Note that
it is a little different from the case of dendriform algebras and
Rota-Baxter algebras given in \cite{GP}, where there is a
dendriform algebra structure on the whole space $\hat A$ defined
by the Rota-Baxter operator $\widehat {\rm Id}$.
\end{rmk}

\subsection{
Commutative Connes cocycles} \ 

A {\bf Connes cocycle} on an associative algebra $(A,\cdot)$ is an
antisymmetric bilinear form $\mathcal{B}$ satisfying
\begin{align}\label{circulation}
\mathcal{B}(x\cdot y, z)+\mathcal{B}(y\cdot z,
x)+\mathcal{B}(z\cdot x, y)=0,\quad \forall x,y,z\in A.
\end{align}
It corresponds to the original definition of cyclic cohomology by
Connes (\cite{C}). Note that there is a close relation between
dendriform algebras and Connes cocycles (\cite{B}). Next we
consider the ``symmetric" version of Connes cocycle.

\begin{defi}
Let $(A,\cdot)$ be an associative algebra and $\mathcal{B}$ be a
bilinear form on $(A,\cdot)$. If $\mathcal{B}$ is symmetric and satisfies
Eq.~(\ref{circulation}), then $\mathcal{B}$ is called a {\bf
commutative Connes cocycle}.
\end{defi}

Let $(V, l, r)$ be a bimodule of an associative algebra
$(A,\cdot)$. Then $(V^*, r^*, l^*)$ is also a bimodule of
$(A,\cdot)$, where $V^*$ is the dual space of $V$ and the linear
maps $r^*,l^*:A\rightarrow {\rm End}_{\mathbb F}(V^*)$ are defined
respectively by
$$ \langle r^*(x)u^*, v\rangle=\langle u^*, r(x)v\rangle, \quad \langle l^*(x)u^*, v\rangle=\langle u^*, l(x)v\rangle,\quad \forall x\in A, u^*\in V^*, v\in
V.$$

\begin{thm}\label{compatible structure}
Let $(A,\cdot)$ be an associative algebra and $\mathcal{B}$ be a
nondegenerate  commutative Connes cocycle on $(A,\cdot)$. Then
there exists a compatible anti-dendriform algebra structure $(A$,
$\vartriangleright$, $\vartriangleleft)$ on $(A,\cdot)$ defined by
\begin{align}\label{non-comm}
\mathcal{B}(x\vartriangleright y, z)=-\mathcal{B}(y,z\cdot x),\ \
\mathcal{B}(x\vartriangleleft y, z)=-\mathcal{B}(x,y\cdot z),\ \
\forall x,y,z\in A.
\end{align}
\end{thm}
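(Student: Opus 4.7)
The plan is to build an invertible anti-$\mathcal{O}$-operator out of $\mathcal{B}$ and then invoke Theorem~\ref{anti-dendriform-invertible anti-O}. Since $\mathcal{B}$ is nondegenerate, it induces a linear isomorphism $\varphi:A\rightarrow A^*$ via $\langle \varphi(x),y\rangle=\mathcal{B}(x,y)$. I will take $T:=\varphi^{-1}:A^*\rightarrow A$ and show that $T$ is an anti-$\mathcal{O}$-operator of $(A,\cdot)$ associated to the dual bimodule $(A^*,R_\cdot^*,L_\cdot^*)$ of the regular bimodule $(A,L_\cdot,R_\cdot)$. Invertibility of $T$ then gives, via Theorem~\ref{anti-dendriform-invertible anti-O}, a compatible anti-dendriform structure on $(A,\cdot)$.

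To verify the anti-$\mathcal{O}$-operator condition, I would set $x=T(u^*)$, $y=T(v^*)$ and apply $\varphi$ to both sides of the defining equation. Pairing with an arbitrary $z\in A$ turns the required identity into
\[
\mathcal{B}(x\cdot y,z)=-\mathcal{B}(y,z\cdot x)-\mathcal{B}(x,y\cdot z),
\]
where I have used the explicit formulas for $R^*_\cdot$ and $L^*_\cdot$. Using that $\mathcal{B}$ is symmetric, the right-hand side rearranges to $-\mathcal{B}(y\cdot z,x)-\mathcal{B}(z\cdot x,y)$, and the identity is then exactly the commutative Connes cocycle condition~\eqref{circulation}. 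So the anti-$\mathcal{O}$-operator axiom for $T$ is equivalent to the defining condition of $\mathcal{B}$.

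Next, I would read off the compatible anti-dendriform structure. By Lemma~\ref{automatically}, the invertible anti-$\mathcal{O}$-operator $T$ is automatically strong, so Proposition~\ref{anti}(\ref{it:32}) furnishes the induced anti-dendriform structure on $T(A^*)=A$ through $T(u^*)\vartriangleright T(v^*)=T(u^*\vartriangleright v^*)$ and the analogous relation for $\vartriangleleft$, where on $A^*$ the operations are $u^*\vartriangleright v^*=-R_\cdot^*(T(u^*))v^*$ and $u^*\vartriangleleft v^*=-L_\cdot^*(T(v^*))u^*$. Applying $\varphi$ to these relations and pairing with $z$ gives $\mathcal{B}(x\vartriangleright y,z)=-\mathcal{B}(y,z\cdot x)$ and $\mathcal{B}(x\vartriangleleft y,z)=-\mathcal{B}(x,y\cdot z)$, matching~\eqref{non-comm}. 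Well-definedness of $\vartriangleright,\vartriangleleft$ follows from nondegeneracy of $\mathcal{B}$, and compatibility ($x\cdot y=x\vartriangleright y+x\vartriangleleft y$) is the statement that $T$ coincides with $-T\circ(R_\cdot^*(x)+L_\cdot^*(y))$ composition, which is exactly the anti-$\mathcal{O}$-operator identity already proved.

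The main obstacle is bookkeeping: keeping the signs, the order of arguments in the dual bimodule $(A^*,R_\cdot^*,L_\cdot^*)$, and the application of the symmetry of $\mathcal{B}$ aligned so that the anti-$\mathcal{O}$-operator axiom reduces \emph{cleanly} to~\eqref{circulation}. The conceptual content is minimal beyond this — the result is essentially a transcription of ``commutative Connes cocycle = invertible anti-$\mathcal{O}$-operator associated to $(A^*,R_\cdot^*,L_\cdot^*)$'' — but the two sign conventions (the minus in the anti-$\mathcal{O}$-operator axiom and the minuses defining $\vartriangleright,\vartriangleleft$ via $\mathcal{B}$) must be tracked carefully.
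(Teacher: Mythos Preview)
Your proposal is correct and follows essentially the same route as the paper: the paper defines the same map $\varphi$ (called $T$ there), verifies the equivalent anti-1-cocycle condition for $\varphi$ rather than the anti-$\mathcal{O}$-operator condition for $\varphi^{-1}$, invokes Theorem~\ref{anti-dendriform-invertible anti-O}, and then unwinds the induced operations exactly as you do. The only cosmetic difference is that the paper checks the anti-1-cocycle identity for $\varphi$ instead of the anti-$\mathcal{O}$-operator identity for $\varphi^{-1}$, but these are tautologically equivalent by the remark following Definition~\ref{anti-O}.
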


\begin{proof}
Define a linear map $T: A\rightarrow A^*$ by $$\langle T(x),
y\rangle=\mathcal{B}(x,y),\ \ \forall x,y\in A.$$ Then $T$ is
invertible since $\mathcal{B}$ is nondegenerate.
 For any $x,y,z\in A,$ we have
\begin{align*}
  \langle T(x\cdot y)+R.^*(x)T(y)+L.^*(y)T(x), z\rangle
 =&\langle T(x\cdot y),z\rangle+\langle T(y),R.(x)z\rangle+\langle T(x),L.(y)z\rangle \\
 =&\mathcal{B}(x\cdot y,z)+\mathcal{B}(z\cdot x,y)+\mathcal{B}(y\cdot z,x)=0,
\end{align*}
which implies that
$T$ is an anti-1-cocycle of $(A,\cdot)$ associated to $(A^*, R.^*, L.^*).$
So $T^{-1}: A^*\rightarrow A$ is an anti-$\mathcal{O}$-operator of $(A,\cdot)$ associated to $(A^*, R.^*, L.^*)$.

Note that for any $x,y\in A,$ there exist $a^*,b^*\in A^*$ such
that $x=T^{-1}(a^*), y=T^{-1}(b^*)$. By Theorem
\ref{anti-dendriform-invertible anti-O}, there is a compatible
anti-dendriform algebra structure on $(A,\cdot)$ defined by
\begin{align*}
x\vartriangleright y:&=T^{-1}(a^*\vartriangleright b^*)=-T^{-1}\big(R.^*(T^{-1}(a^*))b^*\big)=-T^{-1}\big(R.^*(x)T(y)\big),\\
x\vartriangleleft y:&=T^{-1}(a^*\vartriangleleft
b^*)=-T^{-1}\big(L.^*(T^{-1}(b^*))a^*\big)=-T^{-1}\big(L.^*(y)T(x)\big).
\end{align*}
Therefore, for any $x,y,z\in A$, we have
\begin{align*}
\mathcal{B}(x\vartriangleright y, z)&=
-\langle R.^*(x)T(y), z\rangle
                        =-\langle T(y),z\cdot x\rangle
                        =-\mathcal{B}(y, z\cdot x),\\
\mathcal{B}(x\vartriangleleft y, z)&=
-\langle L.^*(y)T(x), z\rangle
                        =-\langle T(x),y\cdot z\rangle
                        =-\mathcal{B}(x, y\cdot z).
\end{align*}
Thus the conclusion holds.
\end{proof}

\begin{cor}\label{commutative Connes cocycles on semi-direct algebras}
Let $(A,\vartriangleright,\vartriangleleft )$ be an anti-dendriform algebra and
$(A,\cdot)$ be the associated associative algebra.
Define a bilinear form $\mathcal{B}$ on $A\oplus A^{*}$ by
\begin{equation}\label{commutative  Connes cocycles on semi-direct algebras1}
\mathcal{B}(x+a^{*},y+b^{*})=\langle x,b^{*}\rangle+\langle
a^{*},y\rangle,\ \  \forall x,y\in A, a^{*},b^{*}\in A^{*}.
\end{equation}
Then $\mathcal{B}$ is a nondegenerate commutative Connes cocycle
on the associative algebra
 $A\ltimes_{-R^{*}_{\vartriangleleft},-L^{*}_{\vartriangleright}}A^*$.
Conversely, let $(A,\cdot)$ be an associative algebra and
$(A^*,l,r)$ be a bimodule of $(A,\cdot)$. Suppose that the
bilinear form given by Eq.~\eqref{commutative  Connes cocycles on
semi-direct algebras1} is a commutative Connes cocycle on
$A\ltimes_{l,r}A^*$. Then there is a compatible anti-dendriform
algebra structure $(A,\vartriangleright,\vartriangleleft )$ on
$(A,\cdot)$ such that  $l=-R^{*}_{\vartriangleleft},
r=-L^{*}_{\vartriangleright}$.
\end{cor}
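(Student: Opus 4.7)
The plan is to handle the two directions separately: the forward direction by direct verification, the converse by invoking Theorem~\ref{compatible structure} on the semi-direct product algebra.

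For the forward direction, symmetry of $\mathcal{B}$ is evident and nondegeneracy follows by splitting a test argument $y+b^*$ into its $A$ and $A^*$ pieces. For the cocycle identity, argue by multilinearity and consider triples with entries drawn from $A$ or from $A^*$. Using that $\mathcal{B}$ vanishes on $A\times A$ and on $A^*\times A^*$, and that in the semi-direct product $A\ltimes_{-R^*_\vartriangleleft,-L^*_\vartriangleright}A^*$ one has $A\cdot A\subset A$, $A\cdot A^*+A^*\cdot A\subset A^*$ and $A^*\cdot A^*=0$, all cases vanish trivially except, up to cyclic rotation, the triple $((x,0),(y,0),(0,c^*))$ with $x,y\in A$ and $c^*\in A^*$. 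Expanding this and applying the adjointness identities $\langle R^*_\vartriangleleft(y)c^*,x\rangle=\langle c^*,x\vartriangleleft y\rangle$ and $\langle L^*_\vartriangleright(x)c^*,y\rangle=\langle c^*,x\vartriangleright y\rangle$ reduces the identity to $\langle c^*,x\cdot y-x\vartriangleright y-x\vartriangleleft y\rangle=0$, which holds by the anti-dendriform compatibility.

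For the converse, start from the hypothesis that $\mathcal{B}$ is a (symmetric, nondegenerate) commutative Connes cocycle on $A\ltimes_{l,r}A^*$. Theorem~\ref{compatible structure} applied here yields a compatible anti-dendriform structure $(\hat\vartriangleright,\hat\vartriangleleft)$ on the whole semi-direct product, specified by $\mathcal{B}(X\hat\vartriangleright Y,Z)=-\mathcal{B}(Y,Z\cdot X)$ and $\mathcal{B}(X\hat\vartriangleleft Y,Z)=-\mathcal{B}(X,Y\cdot Z)$. The crucial observation is that the subspace $A$ is closed under both operations: for $x,y\in A$ and any $z\in A$, one has $\mathcal{B}(x\hat\vartriangleright y,z)=-\mathcal{B}(y,z\cdot x)=0$ since $z\cdot x\in A$, forcing the $A^*$-component of $x\hat\vartriangleright y$ to vanish; an analogous computation gives $x\hat\vartriangleleft y\in A$. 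Define $(A,\vartriangleright,\vartriangleleft)$ by restriction; compatibility with $\cdot$ is inherited because the associated associative product of $(\hat\vartriangleright,\hat\vartriangleleft)$ is the given semi-direct product, whose restriction to $A$ is $\cdot$. To identify $l$ and $r$, pair $x\hat\vartriangleright y\in A$ with a test $c^*$: $\langle c^*,x\vartriangleright y\rangle=-\mathcal{B}(y,c^*\cdot x)=-\langle r(x)c^*,y\rangle$, which by adjointness is exactly $r=-L^*_\vartriangleright$; the parallel computation for $\hat\vartriangleleft$ yields $l=-R^*_\vartriangleleft$.

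I expect the principal technical step to be the demonstration that $A\subset A\oplus A^*$ is closed under both $\hat\vartriangleright$ and $\hat\vartriangleleft$ in the converse. This rests on $A$ being maximal isotropic for $\mathcal{B}$ and a subalgebra of the semi-direct product, together with the pairing formulas from Theorem~\ref{compatible structure}; once it is established, the identification of $l,r$ with $-R^*_\vartriangleleft,-L^*_\vartriangleright$ and the compatibility $x\cdot y=x\vartriangleright y+x\vartriangleleft y$ are routine bookkeeping with the standard dual-bimodule adjoints.
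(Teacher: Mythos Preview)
Your proposal is correct and follows essentially the same approach as the paper: the forward direction is a direct verification (which the paper simply calls straightforward), and the converse applies Theorem~\ref{compatible structure} to the semi-direct product, uses the isotropy of $A$ together with $A\cdot A\subset A$ to show $A$ is closed under the induced operations $\hat\vartriangleright,\hat\vartriangleleft$, and then identifies $l,r$ via the dual-pairing adjoints. The only cosmetic difference is that the paper computes $l$ using the $\hat\vartriangleleft$-invariance identity (pairing $y\vartriangleleft x$ against $a^*$) rather than the $\hat\vartriangleright$-identity, but the two computations are entirely parallel.
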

\begin{proof}
\delete{ ($\Rightarrow$) It is easy to see that $\mathcal{B}$ is
nondegenerate and commutative. For any $x, y, z\in A, a^*, b^*,
c^*\in A^*,$ we compute
\begin{align*}
\mathcal{B}\bigg((x+a^*)(y+b^*), z+c^*\bigg)&=\mathcal{B}\bigg(x\cdot y-\mathcal{R}^{*}_{\vartriangleleft}(x)b^*
-\mathcal{L}^{*}_{\vartriangleright}(y)a^*, z+c^*\bigg)\\
&=\langle x\cdot y,c^*\rangle-\langle \mathcal{R}^{*}_{\vartriangleleft}(x)b^*, z\rangle-\langle \mathcal{L}^{*}_{\vartriangleright}(y)a^*, z\rangle \\
&=\langle x\cdot y,c^*\rangle-\langle b^*, z\vartriangleleft x \rangle-\langle a^*, y\vartriangleright z\rangle.
\end{align*}
Similarly, we have
\begin{align*}
\mathcal{B}\bigg((y+b^*)(z+c^*), x+a^*\bigg)
=\langle y\cdot z, a^*\rangle-\langle c^*, x\vartriangleleft y \rangle-\langle b^*, z\vartriangleright x\rangle,\\
\mathcal{B}\bigg((z+c^*)(x+a^*), y+b^*\bigg)
=\langle z\cdot x, b^*\rangle-\langle a^*, y\vartriangleleft z \rangle-\langle c^*, x\vartriangleright y\rangle.
\end{align*}
Thus $\mathcal{B}\bigg((x+a^*)(y+b^*),
z+c^*\bigg)+\mathcal{B}\bigg((y+b^*)(z+c^*),
x+a^*\bigg)+\mathcal{B}\bigg((z+c^*)(x+a^*), y+b^*\bigg)=0.$}

It is straightforward to show that $\mathcal{B}$ is a
nondegenerate commutative Connes cocycle on
$A\ltimes_{-R^{*}_{\vartriangleleft},-L^{*}_{\vartriangleright}}A^*$.
Conversely, by Theorem \ref{compatible structure}, there is a
compatible anti-dendriform algebra structure
$\vartriangleright,\vartriangleleft$ given by Eq.~\eqref{non-comm}
on $A\ltimes_{l,r}A^*.$ In particular, we have
$$\mathcal{B}(x\vartriangleright y, z)=-\mathcal{B}(y,z\cdot x)=0,\ \ \mathcal{B}(x\vartriangleleft y, z)=-\mathcal{B}(x,y\cdot z)=0,\ \ \forall x,y,z\in A.$$
Thus $x\vartriangleright y,x\vartriangleleft y\in A$ for all
$x,y\in A$ and hence $(A, \vartriangleright,\vartriangleleft)$ is
an anti-dendriform algebra. Furthermore, for all $x,y\in A, a^*\in
A^*,$ we have
\begin{align*}
&\langle -R_{\vartriangleleft }^*(x)a^*, y\rangle=-\langle a^*, y\vartriangleleft x\rangle=-\mathcal{B}( y\vartriangleleft x, a^*)
=\mathcal{B}( y, l(x) a^*)=\langle l(x) a^*, y\rangle,\\
&\langle -L_{\vartriangleright }^*(x)a^*, y\rangle=-\langle a^*, x\vartriangleright y\rangle=-\mathcal{B}( x\vartriangleright y, a^*)
=\mathcal{B}(y, r(x) a^*)=\langle r(x) a^*,y\rangle.
\end{align*}
So $l=-R^{*}_{\vartriangleleft},
r=-L^{*}_{\vartriangleright}$. 
\end{proof}

\begin{defi}\label{anti-dendriform invariant form}
Let $(A, \vartriangleright,\vartriangleleft)$ be an anti-dendriform algebra. A bilinear form $\mathcal{B}$ on $(A, \vartriangleright,\vartriangleleft)$ is called {\bf invariant} if
\begin{align*}
\mathcal{B}(x\vartriangleright y, z)=-\mathcal{B}(y,z\cdot x),\ \ \mathcal{B}(x\vartriangleleft y, z)=-\mathcal{B}(x,y\cdot z),\ \ \forall x,y,z\in A,
\end{align*}
where the bilinear operation $\cdot$ is defined by Eq.~(\ref{eq:asso}).
\end{defi}

The following conclusion is obvious.

\begin{lem}
Let $\mathcal{B}$ be an invariant bilinear form on an
anti-dendriform algebra $(A$, $\vartriangleright$,
$\vartriangleleft)$. Then $\mathcal{B}$ satisfies
$$\mathcal{B}(x\vartriangleleft y,z)=\mathcal{B}(z\vartriangleright x, y),\;\;\forall x,y,z\in A.$$
\end{lem}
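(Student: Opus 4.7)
The plan is extremely short: the claimed identity is a direct consequence of comparing the two defining relations for invariance, with no use of the anti-dendriform axioms beyond the definition of $\cdot$.

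First, I would take the second defining relation of invariance, namely
\[
\mathcal{B}(x\vartriangleleft y, z) = -\mathcal{B}(x, y\cdot z),
\]
and regard it as giving a ``normal form'' for the left-hand side. Second, I would apply the first defining relation of invariance, $\mathcal{B}(x'\vartriangleright y', z') = -\mathcal{B}(y', z'\cdot x')$, with the substitution $x' = z$, $y' = x$, $z' = y$, obtaining
\[
\mathcal{B}(z\vartriangleright x, y) = -\mathcal{B}(x, y\cdot z).
\]
The right-hand sides agree, so the two left-hand sides are equal, which is exactly the claim.

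There is essentially no obstacle: the identity is pure renaming of variables inside the two invariance conditions, holding for any symmetric (or even arbitrary) bilinear form satisfying those two conditions. In particular the argument does not invoke Eqs.~\eqref{eq:defi:anti-dendriform algebras1} or~\eqref{eq:defi:anti-dendriform algebras2}; the anti-dendriform structure is used only implicitly through the definition of the associated product $\cdot$ via Eq.~\eqref{eq:asso} that appears in the invariance conditions themselves. Hence the proof will be a two-line chain of equalities.
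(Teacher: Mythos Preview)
Your argument is correct and is exactly the two-line verification the paper has in mind; the paper itself simply declares the conclusion ``obvious'' without writing out the substitution. Nothing more is needed.
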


\begin{pro}\label{symmetric invariant-commutative Connes}
Let $(A,\vartriangleright,\vartriangleleft)$ be an anti-dendriform
algebra and $\mathcal{B}$ be a symmetric invariant bilinear form
on $(A,\vartriangleright,\vartriangleleft).$ Then $\mathcal{B}$ is
a commutative Connes cocycle on the associated associative algebra
$(A,\cdot)$. Conversely, suppose that $(A,\cdot)$ is an associative
algebra and $\mathcal{B}$ is a nondegenerate commutative Connes
cocycle on $(A,\cdot).$ Then $\mathcal{B}$ is invariant on the
compatible anti-dendriform algebra
$(A,\vartriangleright,\vartriangleleft)$ defined by
Eq.~\eqref{non-comm}.
\end{pro}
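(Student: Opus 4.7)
The plan is to prove the two directions essentially by unwinding definitions and using symmetry.

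For the forward direction, suppose $\mathcal{B}$ is a symmetric invariant bilinear form on $(A,\vartriangleright,\vartriangleleft)$. I would start by adding the two defining identities of invariance,
\[
\mathcal{B}(x\vartriangleright y,z)=-\mathcal{B}(y,z\cdot x),\qquad \mathcal{B}(x\vartriangleleft y,z)=-\mathcal{B}(x,y\cdot z),
\]
and then use $x\cdot y=x\vartriangleright y+x\vartriangleleft y$ to obtain
\[
\mathcal{B}(x\cdot y,z)=-\mathcal{B}(y,z\cdot x)-\mathcal{B}(x,y\cdot z).
\]
Applying symmetry of $\mathcal{B}$ to each term on the right yields
\[
\mathcal{B}(x\cdot y,z)+\mathcal{B}(y\cdot z,x)+\mathcal{B}(z\cdot x,y)=0,
\]
which is precisely the commutative Connes cocycle condition \eqref{circulation}. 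Since $\mathcal{B}$ is already assumed symmetric, this finishes the first half.

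For the converse direction, the work is essentially already packaged in Theorem~\ref{compatible structure}: Eq.~\eqref{non-comm} is literally the invariance condition of Definition~\ref{anti-dendriform invariant form}. Thus once we know the operations $\vartriangleright,\vartriangleleft$ defined there produce an anti-dendriform algebra (guaranteed by Theorem~\ref{compatible structure}), invariance of $\mathcal{B}$ with respect to them is immediate from the defining equations. I would make this explicit by simply quoting the two identities of~\eqref{non-comm} and comparing to Definition~\ref{anti-dendriform invariant form}.

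There is no real obstacle here; the only subtlety is making sure the reader sees why symmetry of $\mathcal{B}$ is needed in the forward direction (to rewrite $\mathcal{B}(y,z\cdot x)=\mathcal{B}(z\cdot x,y)$ and $\mathcal{B}(x,y\cdot z)=\mathcal{B}(y\cdot z,x)$), and why invariance in the converse is automatic from the very formula that defines the compatible anti-dendriform structure. I will present the proof compactly in two short paragraphs along these lines.
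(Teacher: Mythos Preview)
Your proposal is correct and follows essentially the same approach as the paper: for the forward direction the paper also adds the two invariance identities (implicitly using symmetry) to collapse $\mathcal{B}(x\cdot y,z)+\mathcal{B}(y\cdot z,x)+\mathcal{B}(z\cdot x,y)$ to $\mathcal{B}(x\cdot y,z)-\mathcal{B}(x\vartriangleleft y,z)-\mathcal{B}(x\vartriangleright y,z)=0$, and for the converse it likewise just cites Theorem~\ref{compatible structure}.
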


\begin{proof}
For the first half part, for all $x,y,z\in A$, we have
\begin{eqnarray*}
  \mathcal{B}(x\cdot y,z)+\mathcal{B}(y\cdot z,x)+\mathcal{B}(z\cdot x,y)=\mathcal{B}(x\cdot
  y,z)-\mathcal B(x\vartriangleleft y, z)-\mathcal B(x\vartriangleright y, z)=0.
\end{eqnarray*}
So $\mathcal{B}$ is a commutative Connes cocycle on $(A,\cdot)$.
The second half part follows from Theorem \ref{compatible structure} immediately.
\end{proof}

Recall that two bimodules $(V_1,l_1,r_1)$ and $(V_2,l_2,r_2)$ of
an associative algebra $(A,\cdot)$ are called {\bf equivalent} if
there is a linear isomorphism $\varphi:V_1\rightarrow V_2$ such
that
$$\varphi(l_1(x)v_1)=l_2(x)\varphi(v_1),\ \ \varphi(r_1(x)v_1)=r_2(x)\varphi(v_1),\ \ \forall x\in A, v_1\in V_1.$$

\begin{pro}
Let $(A, \vartriangleright,\vartriangleleft)$ be an
anti-dendriform algebra. Then there is a nondegenerate invariant
bilinear form on $(A, \vartriangleright,\vartriangleleft)$ if and
only if $(A, -L_\vartriangleright,-R_\vartriangleleft)$ and $(A^*, R.^*, L.^*)$ are equivalent as bimodules of the associated
associative algebra $(A,\cdot)$.
\end{pro}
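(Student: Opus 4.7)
The plan is to package the data of a bilinear form $\mathcal{B}$ on $A$ as a linear map $\varphi\colon A\to A^{*}$ via $\langle \varphi(x),y\rangle=\mathcal{B}(x,y)$ for all $x,y\in A$. Under this correspondence, nondegeneracy of $\mathcal{B}$ is equivalent to $\varphi$ being a linear isomorphism, so the whole statement reduces to checking that \emph{invariance} of $\mathcal{B}$ in the sense of Definition~\ref{anti-dendriform invariant form} is equivalent to $\varphi$ intertwining the two bimodule structures.

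For the forward direction, assume $\mathcal{B}$ is a nondegenerate invariant bilinear form and let $\varphi$ be as above. I would compute, for all $x,y,z\in A$,
\begin{align*}
\langle \varphi(-L_{\vartriangleright}(x)y),z\rangle
&=-\mathcal{B}(x\vartriangleright y,z)=\mathcal{B}(y,z\cdot x)=\langle \varphi(y),z\cdot x\rangle=\langle R_{\cdot}^{*}(x)\varphi(y),z\rangle,\\
\langle \varphi(-R_{\vartriangleleft}(x)y),z\rangle
&=-\mathcal{B}(y\vartriangleleft x,z)=\mathcal{B}(y,x\cdot z)=\langle \varphi(y),x\cdot z\rangle=\langle L_{\cdot}^{*}(x)\varphi(y),z\rangle,
\end{align*}
where the middle equality in each line is exactly the invariance of $\mathcal{B}$. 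These identities say that $\varphi$ intertwines $-L_{\vartriangleright}$ with $R_{\cdot}^{*}$ and $-R_{\vartriangleleft}$ with $L_{\cdot}^{*}$, so $\varphi$ exhibits $(A,-L_{\vartriangleright},-R_{\vartriangleleft})$ and $(A^{*},R_{\cdot}^{*},L_{\cdot}^{*})$ as equivalent bimodules of $(A,\cdot)$.

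For the converse, start from an equivalence of bimodules $\varphi\colon A\to A^{*}$ and \emph{define} $\mathcal{B}(x,y)=\langle \varphi(x),y\rangle$; nondegeneracy is automatic from $\varphi$ being an isomorphism. Then the two intertwining conditions on $\varphi$, read backwards, give precisely
\begin{equation*}
\mathcal{B}(x\vartriangleright y,z)=-\mathcal{B}(y,z\cdot x),\qquad \mathcal{B}(x\vartriangleleft y,z)=-\mathcal{B}(x,y\cdot z),
\end{equation*}
which is the definition of invariance. Since both directions are reversible one-line manipulations once the dictionary is in place, there is no genuine obstacle to overcome; the whole content is a careful bookkeeping of which operation on $A$ corresponds to which dualized multiplication on $A^{*}$, entirely parallel to the bookkeeping already carried out in the proof of Corollary~\ref{commutative Connes cocycles on semi-direct algebras} and Proposition~\ref{symmetric invariant-commutative Connes}.
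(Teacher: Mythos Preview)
Your proof is correct and essentially identical to the paper's: both set up the dictionary $\mathcal{B}(x,y)=\langle\varphi(x),y\rangle$ (the paper writes $\psi$) and verify that the two invariance identities correspond precisely to the two intertwining conditions for the bimodule equivalence. The only cosmetic difference is that the paper presents the ``equivalence $\Rightarrow$ invariant form'' direction first and then remarks that the converse is obtained by the same computation, whereas you do the two directions in the opposite order.
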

\begin{proof}
Suppose that $(A, -L_\vartriangleright,-R_\vartriangleleft)$ and $(A^*,
R.^*, L.^*)$ are equivalent as bimodules of $(A,\cdot)$. Then
there exists a linear isomorphism $\psi: A\rightarrow A^*$ such
that
$$\psi(-L_\vartriangleright(x)y)=R.^*(x)\psi(y),\ \ \psi(-R_\vartriangleleft(x)y)=L.^*(x)\psi(y),\ \ \forall x,y\in A.$$
Define a nondegenerate bilinear form $\mathcal{B}$ on $A$ as
\begin{align}\label{B-psi}
\mathcal{B}(x,y)=\langle \psi(x), y\rangle,\quad \forall x,y\in A.
\end{align}
For any $x,y,z\in A,$ we have
 \begin{align*}
   \mathcal{B}(x\vartriangleright y,z)&
   =-\langle\psi(-L_\vartriangleright(x)y),z\rangle
                          =-\langle R.^*(x)\psi(y), z\rangle
                          =-\mathcal{B}(y,z\cdot x),\\
   \mathcal{B}(x\vartriangleleft y,z)&
   =-\langle\psi(-R_\vartriangleleft(y)x),z\rangle
                          =-\langle L.^*(y)\psi(x),z\rangle
                          =-\mathcal{B}(x,y\cdot z).
 \end{align*}
So $\mathcal{B}$ is invariant.

Conversely, suppose that $\mathcal{B}$ is a nondegenerate invariant
bilinear form on $(A, \vartriangleright,\vartriangleleft).$ Define
a linear map $\psi:A\rightarrow A^*$ by Eq.~(\ref{B-psi}). By a
similar proof as above, we show that $\psi$ gives an equivalence
between $(A, -L_\vartriangleright,-R_\vartriangleleft)$ and $(A^*,
R.^*, L.^*)$ as bimodules of $(A,\cdot)$.
This completes the proof.
\end{proof}


Recall that a symmetric bilinear form $\mathcal{B}$ on a Lie
algebra $(A,[\,,\,])$ is called a {\bf commutative 2-cocycle} (see
\cite{DZ}) if the following equation holds:
$$\mathcal{B}([x,y],z)+\mathcal{B}([y,z],x)+\mathcal{B}([z,x],y)=0,\ \ \forall x,y,z\in A.$$
By \cite{LB}, there is a compatible anti-pre-Lie algebra structure
$(A,\circ)$ on a Lie algebra  $(A,[\,,\,])$ with a nondegenerate
commutative 2-cocycle $\mathcal B$ defined by
\begin{equation}\label{eq:antipl}\mathcal{B}(x\circ
y,z)=\mathcal{B}(y,[x,z]), \ \ \forall x,y,z\in A.\end{equation}
 A bilinear form $\mathcal{B}$ on an anti-pre-Lie algebra $(A,\circ)$ is called {\bf invariant} if Eq.~(\ref{eq:antipl}) holds.


\begin{lem} \label{lem:lem11}{\rm (\cite{LB})} Any symmetric invariant bilinear form on an anti-pre-Lie algebra
$(A,\circ)$ is a commutative 2-cocycle on the sub-adjacent Lie
algebra $(\frak g(A),[\,,\,])$. Conversely, a nondegenerate
commutative 2-cocycle on a Lie algebra $(A,[\,,\,])$ is invariant
on the compatible anti-pre-Lie algebra $(A,\circ)$ defined by Eq.~(\ref{eq:antipl}).
\end{lem}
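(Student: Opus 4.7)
My plan is to prove the two directions by direct manipulation using only the definitions of the sub-adjacent Lie bracket, symmetry of $\mathcal{B}$, and the invariance identity $\mathcal{B}(x\circ y,z)=\mathcal{B}(y,[x,z])$. The key observation is that invariance is essentially the defining relation of the compatible anti-pre-Lie structure in Eq.~(\ref{eq:antipl}), so the converse direction will be almost tautological; the real content is in the forward direction.

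For the forward direction, I would take a symmetric invariant bilinear form $\mathcal{B}$ on $(A,\circ)$ and unfold $\mathcal{B}([x,y],z)$ using $[x,y]=x\circ y-y\circ x$. Invariance yields
\begin{equation*}
\mathcal{B}([x,y],z)=\mathcal{B}(x\circ y,z)-\mathcal{B}(y\circ x,z)=\mathcal{B}(y,[x,z])-\mathcal{B}(x,[y,z]).
\end{equation*}
Using symmetry of $\mathcal{B}$, this rewrites as $\mathcal{B}([x,z],y)-\mathcal{B}([y,z],x)=-\mathcal{B}([z,x],y)-\mathcal{B}([y,z],x)$. Adding this to the corresponding cyclic permutations (or just rearranging) gives
\begin{equation*}
\mathcal{B}([x,y],z)+\mathcal{B}([y,z],x)+\mathcal{B}([z,x],y)=0,
\end{equation*}
which is the commutative 2-cocycle condition on $(\mathfrak{g}(A),[\,,\,])$. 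Symmetry of $\mathcal{B}$ is preserved trivially, so $\mathcal{B}$ is a commutative 2-cocycle.

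For the converse, let $\mathcal{B}$ be a nondegenerate commutative 2-cocycle on a Lie algebra $(A,[\,,\,])$, and let $(A,\circ)$ be the compatible anti-pre-Lie algebra produced by Eq.~(\ref{eq:antipl}). Then the defining relation is exactly
\begin{equation*}
\mathcal{B}(x\circ y,z)=\mathcal{B}(y,[x,z]),\;\;\forall x,y,z\in A,
\end{equation*}
which is precisely the invariance condition on $(A,\circ)$. Hence $\mathcal{B}$ is invariant on $(A,\circ)$ by construction.

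I expect no real obstacle here: the forward direction is a three-line computation combining the bracket definition, invariance, and symmetry; the backward direction is immediate from the way $\circ$ was defined. The only subtlety worth flagging is that nondegeneracy of $\mathcal{B}$ is used in the converse solely to guarantee that $x\circ y$ is well-defined by Eq.~(\ref{eq:antipl}), and plays no role in the forward implication.
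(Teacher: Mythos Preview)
Your argument is correct. The forward direction follows exactly as you indicate: expanding $[x,y]=x\circ y-y\circ x$, applying invariance, then symmetry of $\mathcal{B}$ and antisymmetry of the bracket yields the cyclic identity immediately (the ``cyclic permutations'' remark is superfluous---a single rearrangement suffices). The converse is indeed tautological from the defining equation~(\ref{eq:antipl}), with nondegeneracy used only to make $x\circ y$ well-defined, as you correctly observe.

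As for comparison with the paper: this lemma is simply quoted from \cite{LB} and carries no proof in the present paper, so there is no in-text argument to compare against. Your direct verification is the standard one and is presumably what appears in \cite{LB}.
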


\begin{lem} \label{lem:lem12} 
\begin{enumerate}
\item Let $(A,\vartriangleright,\vartriangleleft)$ be an
anti-dendriform algebra with a symmetric invariant bilinear form
$\mathcal{B}.$ Then $\mathcal{B}$ is  invariant on the associated
anti-pre-Lie algebra $(A,\circ).$ \label{symmetric
invariant-symmetric invariant}
\item Let $(A,\cdot)$ be an
associative algebra with a commutative Connes cocycle
$\mathcal{B}.$ Then $\mathcal{B}$ is a commutative 2-cocycle on
the sub-adjacent Lie algebra $(\frak g(A),[\, ,\,
]).$\label{commutative Connes-commutative 2-cocycle}
\end{enumerate}
\end{lem}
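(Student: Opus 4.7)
The plan is to verify the two items by straightforward unpacking of the definitions, exploiting the symmetry of $\mathcal{B}$ to recombine invariance-type identities.

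For Item (1), I would start from the formula $x\circ y=x\vartriangleright y-y\vartriangleleft x$ of Proposition~\ref{property-1}(\ref{it:2}) and compute $\mathcal{B}(x\circ y,z)$ directly. Applying the two invariance relations from Definition~\ref{anti-dendriform invariant form}, namely $\mathcal{B}(x\vartriangleright y,z)=-\mathcal{B}(y,z\cdot x)$ and $\mathcal{B}(y\vartriangleleft x,z)=-\mathcal{B}(y,x\cdot z)$, one obtains
\[
\mathcal{B}(x\circ y,z)=-\mathcal{B}(y,z\cdot x)+\mathcal{B}(y,x\cdot z)=\mathcal{B}(y,x\cdot z-z\cdot x)=\mathcal{B}(y,[x,z]),
\]
which is precisely the invariance condition \eqref{eq:antipl} for the associated anti-pre-Lie algebra.

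For Item (2), I would expand each term $\mathcal{B}([x,y],z)=\mathcal{B}(x\cdot y,z)-\mathcal{B}(y\cdot x,z)$ using bilinearity, and similarly for the cyclic permutations. Grouping the six resulting summands into two cyclic triples yields
\[
\bigl[\mathcal{B}(x\cdot y,z)+\mathcal{B}(y\cdot z,x)+\mathcal{B}(z\cdot x,y)\bigr]-\bigl[\mathcal{B}(y\cdot x,z)+\mathcal{B}(z\cdot y,x)+\mathcal{B}(x\cdot z,y)\bigr].
\]
The first bracket vanishes by Eq.~\eqref{circulation} applied to $(x,y,z)$. For the second bracket, using the symmetry $\mathcal{B}(u,v)=\mathcal{B}(v,u)$, it can be rewritten so as to match the cocycle identity applied to a suitable ordering (e.g.\ $(y,x,z)$ after relabeling), and hence vanishes as well.

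Neither step has a real obstacle; the whole argument is a short symbolic manipulation. The only point requiring a little care is the second bracket in Item (2), where one must use symmetry of $\mathcal{B}$ to bring the three terms into a cyclic pattern before invoking Eq.~\eqref{circulation}; this is the sole place where the hypothesis that the Connes cocycle is commutative (i.e.\ $\mathcal{B}$ symmetric) is genuinely used, so I would make sure to highlight it explicitly.
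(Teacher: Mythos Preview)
Your proof is correct and matches the paper's own treatment (the paper simply records the lemma as ``straightforward''). One small correction: in Item~(2), the second bracket $\mathcal{B}(y\cdot x,z)+\mathcal{B}(z\cdot y,x)+\mathcal{B}(x\cdot z,y)$ is already Eq.~\eqref{circulation} applied verbatim to the triple $(y,x,z)$, so no appeal to symmetry is needed at that step; the symmetry hypothesis enters only in the conclusion, to say that $\mathcal{B}$ is a \emph{commutative} 2-cocycle on $\mathfrak{g}(A)$.
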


\begin{proof}
It is straightforward.
\end{proof}

\begin{pro}

Let $(A,\vartriangleright,\vartriangleleft)$ be an anti-dendriform
algebra with a symmetric invariant bilinear form $\mathcal B$.
Then the following conclusions hold:
\begin{enumerate}
\item $\mathcal{B}$ is a commutative Connes cocycle on the
associated associative algebra $(A,\cdot)$; \item  $\mathcal{B}$
is  invariant on the associated anti-pre-Lie algebra $(A,\circ)$;
\item $\mathcal{B}$ is  a commutative 2-cocycle on the
sub-adjacent Lie algebra $(\frak g(A),[\,,\,])$ of both
$(A,\cdot)$ and $(A,\circ)$.
\end{enumerate}
That is, the following diagram by ``putting" the symmetric
bilinear forms into the diagram~(\ref{diag:comm}) is commutative.
 \begin{equation} \label{eq:rbdiag1}
\begin{split}
{\tiny  \xymatrix{
   \txt{anti-dendriform algebra $(A,\vartriangleright,\vartriangleleft)$ with  \\ a symmetric invariant bilinear form $\mathcal{B}$}
    \ar@<1mm>@{->}[d]
    \ar@{->}[r]
    & \txt{anti-pre-Lie algebra $(A,\circ)$ with \\ a symmetric invariant bilinear form $\mathcal{B}.$}
     \ar@<1mm>@{->}[d]\\
\txt{associative algebra $(A,\cdot)$ with \\ a commutative Connes
cocycle $\mathcal{B}$}
   \ar@{->}[r]
      &\txt{ Lie algebra  $(\frak g(A),[\, ,\, ])$ with \\ a commutative 2-cocycle $\mathcal{B}$}
}}
\end{split}
\end{equation}
Conversely, let $(A,\cdot)$ be an associative algebra with a
nondegenerate commutative Connes cocycle $\mathcal{B}$. On the one
hand, $\mathcal B$ is a nondegenerate commutative 2-cocycle on the sub-adjacent
Lie algebra $(\frak g(A),[\, ,\, ])$ and hence there is a
compatible anti-pre-Lie algebra $(A,\circ)$ defined by
Eq.~(\ref{eq:antipl}) and $\mathcal B$ is invariant on
$(A,\circ)$. On the other hand, there is a compatible
anti-dendriform algebra $(A,\vartriangleright,\vartriangleleft)$
defined by Eq.~\eqref{non-comm} and $\mathcal B$ is invariant on
$(A,\vartriangleright,\vartriangleleft)$. Hence $\mathcal B$ is
invariant on the associated anti-pre-Lie algebra $(A,\circ')$
defined by Eq.~(\ref{eq:antipl1}). Therefore $(A,\circ)$ and
$(A,\circ')$ coincide, that is, the following diagram is
commutative.
 \begin{equation} \label{eq:rbdiag}
\begin{split}
{\tiny  \xymatrix{
   \txt{anti-dendriform algebra $(A,\vartriangleright,\vartriangleleft)$ with a \\ nondegenerate symmetric invariant bilinear form $\mathcal{B}$}
    \ar@<1mm>@{<-}[d]
    \ar@{->}[r]
    & \txt{anti-pre-Lie algebra $(A,\circ)$ with a \\ nondegenerate symmetric invariant bilinear form $\mathcal{B}.$}
     \ar@<1mm>@{<-}[d]\\
\txt{associative algebra $(A,\cdot)$ with a\\ nondegenerate
commutative Connes cocycle $\mathcal{B}$}
   \ar@{->}[r]
      &\txt{ Lie algebra  $(\frak g(A),[\, ,\, ])$ with a\\ nondegenerate commutative 2-cocycle $\mathcal{B}$}
}}
\end{split}
\end{equation}

\end{pro}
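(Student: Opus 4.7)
My plan is to assemble the forward direction from three previously established results and then handle the converse by producing the anti-pre-Lie structure through two independent routes, identifying them via the nondegeneracy of $\mathcal{B}$.

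For the forward direction, suppose $(A,\vartriangleright,\vartriangleleft)$ is an anti-dendriform algebra with symmetric invariant bilinear form $\mathcal{B}$. Item~(1) is exactly the first half of Proposition~\ref{symmetric invariant-commutative Connes}. Item~(2) is exactly Lemma~\ref{lem:lem12}(\ref{symmetric invariant-symmetric invariant}). For item~(3) I would combine item~(2) with Lemma~\ref{lem:lem11}, which asserts that every symmetric invariant bilinear form on an anti-pre-Lie algebra is a commutative 2-cocycle on its sub-adjacent Lie algebra; by Proposition~\ref{property-1}(\ref{it:3}) the sub-adjacent Lie algebras of $(A,\cdot)$ and $(A,\circ)$ coincide as $(\frak g(A),[\,,\,])$, so $\mathcal{B}$ is a commutative 2-cocycle on this common Lie algebra, and the commutativity of diagram~(\ref{eq:rbdiag1}) follows.

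For the converse, start from $(A,\cdot)$ together with a nondegenerate commutative Connes cocycle $\mathcal{B}$, and construct an anti-pre-Lie algebra with $\mathcal{B}$ invariant in two ways. The first route goes through the sub-adjacent Lie algebra: by Lemma~\ref{lem:lem12}(\ref{commutative Connes-commutative 2-cocycle}), $\mathcal{B}$ is a nondegenerate commutative 2-cocycle on $(\frak g(A),[\,,\,])$, so Lemma~\ref{lem:lem11} yields a compatible anti-pre-Lie algebra $(A,\circ)$ defined by Eq.~\eqref{eq:antipl} on which $\mathcal{B}$ is invariant. The second route goes through the anti-dendriform algebra: the converse part of Proposition~\ref{symmetric invariant-commutative Connes}, combined with Theorem~\ref{compatible structure}, gives a compatible anti-dendriform algebra $(A,\vartriangleright,\vartriangleleft)$ defined by Eq.~\eqref{non-comm} on which $\mathcal{B}$ is invariant, and then Lemma~\ref{lem:lem12}(\ref{symmetric invariant-symmetric invariant}) transfers this invariance to its associated anti-pre-Lie algebra $(A,\circ')$ defined by Eq.~\eqref{eq:antipl1}.

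The only remaining step, which I regard as the main (though short) obstacle, is the identification $(A,\circ)=(A,\circ')$. Invariance of $\mathcal{B}$ on both anti-pre-Lie algebras gives
\[
\mathcal{B}(x\circ y,z)=\mathcal{B}(y,[x,z])=\mathcal{B}(x\circ' y,z),\qquad \forall\,x,y,z\in A,
\]
so $\mathcal{B}(x\circ y-x\circ' y,z)=0$ for all $z\in A$. Nondegeneracy of $\mathcal{B}$ then forces $x\circ y=x\circ' y$ for all $x,y\in A$, whence the two routes produce the same anti-pre-Lie algebra and diagram~(\ref{eq:rbdiag}) commutes.
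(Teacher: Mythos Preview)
Your proposal is correct and follows essentially the same route as the paper: the forward direction is assembled from Proposition~\ref{symmetric invariant-commutative Connes}, Lemma~\ref{lem:lem11}, and Lemma~\ref{lem:lem12}, and the converse identifies $(A,\circ)$ with $(A,\circ')$ via $\mathcal{B}(x\circ y,z)=\mathcal{B}(y,[x,z])=\mathcal{B}(x\circ' y,z)$ together with nondegeneracy. The paper's proof expands the middle equality explicitly through the anti-dendriform invariance identities, whereas you invoke the already-established invariance on $(A,\circ')$ directly; both are equivalent.
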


\begin{proof}
By the first half parts of Proposition~\ref{symmetric
invariant-commutative Connes} and Lemma~\ref{lem:lem11}
respectively, and Lemma~\ref{lem:lem12}, the first half part
follows. For the second half part, note that for any $x,y,z\in A$,
we have
\begin{eqnarray*}
\mathcal B(x\circ y, z)=\mathcal B(y,[x,z])=\mathcal B(y,x\cdot
z-z\cdot x)=\mathcal B(x\vartriangleright y, z)-\mathcal
B(y\vartriangleleft x,z)=\mathcal B(x\circ' y,z).
\end{eqnarray*}
Hence $x\circ y=x\circ'y$. Then the conclusion follows immediately
from the second half parts of Proposition~\ref{symmetric
invariant-commutative Connes} and Lemma~\ref{lem:lem11}
respectively, and Lemma~\ref{lem:lem12}.
\end{proof}

\section{Correspondences between some subclasses of dendriform and anti-dendriform algebras }

We give the correspondence between some subclasses of
dendriform algebras and anti-dendriform algebras in terms of
$q$-algebras. We also generalize the correspondence between some
subclasses of pre-Lie algebras and anti-pre-Lie algebras from $q=-
2$ in \cite{LB} to any $q\ne 0,\pm1$ and hence the relationships
between dendriform algebras and the associated pre-Lie algebras as
well as anti-dendriform algebras and the associated anti-pre-Lie
algebras are still kept on these subclasses for a fixed $q$. Therefore in the case that $q=- 2$, the notions of Novikov-type dendriform algebras and admissible Novikov-type
dendriform algebras are introduced as analogues of Novikov algebras and admissible Novikov algebras for
dendriform algebras and anti-dendriform algebras respectively.

Throughout this section, $q\in \mathbb{F}$ satisfying $q\ne 0,
\pm1.$

\subsection{Correspondences between some subclasses of dendriform and anti-dendriform algebras}\label{Correspondence-1}





\begin{defi}
Let $A$ be a vector space with two bilinear operations
$\succ,\prec.$ Define two bilinear operations
$\vartriangleright,\vartriangleleft:A\otimes A\rightarrow A$
respectively by
\begin{equation}\label{eq:q-al}
 x\vartriangleright y=x\succ y+qx\prec y, \quad x\vartriangleleft y=x\prec  y+qx\succ y,\quad  \forall x,y\in A.
\end{equation}
Then the triple $(A,\vartriangleright,\vartriangleleft)$ is called a $q$-{\bf algebra} of $(A,\succ,\prec).$
\end{defi}

\begin{rmk} There is an alternative choice of $q$-algebras for the
triple $(A,\succ,\prec)$. Let $A$ be a vector space with two
bilinear operations $\succ,\prec.$ Define two bilinear operations
$\vartriangleright',\vartriangleleft':A\otimes A\rightarrow A$
respectively by
\begin{eqnarray}\label{another q-algebra}
 x\vartriangleright' y=x\succ y+qy\succ x, \quad x\vartriangleleft' y=x\prec  y+qy\prec x,\quad  \forall x,y\in A.
\end{eqnarray}
However,  such an approach is not ``naturally available" for
associative admissible algebras such as dendriform as well as
anti-dendriform algebras. In fact, suppose that $(A,\succ,\prec)$ is an
associative admissible algebra. Then we have the following conclusions.
\begin{enumerate}
\item By Eq.~\eqref{eq:q-al},
$(A,\vartriangleright,\vartriangleleft)$ is always an associative
admissible algebra. \item   If $q\ne 0$, then from
Eq.~\eqref{another q-algebra},
$(A,\vartriangleright',\vartriangleleft')$ is an associative
admissible algebra if and only if the $q$-algebra (see
Definition~\ref{algebra-q-algebra}) of the associated associative
algebra $(A,\cdot)$ of $(A,\succ,\prec)$, where $\cdot$ is defined
by Eq.~(\ref{dendriform algebra'}), is still an associative algebra. Note that
the latter holds if and only if the sub-adjacent Lie algebra
$(\frak g(A),[\,,\,])$ of $(A,\cdot)$ is two-step nilpotent, that
is, $[[x,y],z]=0$ for all $x,y,z\in A$.
 \end{enumerate}
 Hence in the sense of keeping the property of splitting the
 associativity for both an associative admissible algebra  $(A,\succ,\prec)$ and its
 $q$-algebra, it is natural to use Eq.~\eqref{eq:q-al} (not Eq.~\eqref{another
 q-algebra}) to define the $q$-algebra of the associative admissible algebra $(A,\succ,\prec)$.
\end{rmk}

\begin{rmk}\label{rem.3.2}
When $q=0$, the $0$-algebra of $(A,\succ,\prec)$ is itself.
Moreover, note that
$$x\vartriangleright y-qx\vartriangleleft y=(1-q^2)x\succ y,\ \ x\vartriangleleft y-qx\vartriangleright y=(1-q^2)x\prec y,\ \ \forall x,y\in A.$$
Hence we have the following conclusions.
\begin{enumerate}
\item When $q\ne \pm1$,  the bilinear operations $\succ,\prec$ can
be presented by $\vartriangleright,\vartriangleleft$. Furthermore,
the $-q$-algebra of $(A,\vartriangleright,\vartriangleleft)$ has
the same algebra structure as $(A,\succ,\prec)$. \item When
$q=\pm1$,  the bilinear operations $\succ,\prec$ cannot be
presented by $\vartriangleright,\vartriangleleft$. Furthermore,
the $-q$-algebra of $(A,\vartriangleright,\vartriangleleft)$ is
trivial.
\end{enumerate}
So in the sense that the triple $(A,\succ,\prec)$ and its
$q$-algebra can be non-trivially presented by each other, the cases
that $q=0,\pm1$ are excluded.
\end{rmk}

\begin{pro}\label{dendriform-anti-dendriform}
Let $(A,\succ,\prec)$ be a dendriform algebra. Denote by $(A,
\vartriangleright,\vartriangleleft)$ the $q$-algebra of
$(A,\succ,\prec).$ Then $(A,\vartriangleright,\vartriangleleft)$
is an anti-dendriform algebra if and only if $(A,\succ,\prec)$
satisfies the following equations:
\begin{equation}\label{eq:s1}
x\succ(y\succ z)=(x\prec y)\prec z,
\end{equation}
\begin{equation}\label{eq:s2}
(x\prec y)\succ z=x\prec(y\succ z),
\end{equation}
\begin{equation}\label{dendri-anti-dendri}
(q^2+3q+2)(x\prec y)\prec z+(q^2+2q)x\succ(y\prec
z)+(q^2-q)x\prec(y\prec z)=0,
\end{equation}
for all $ x,y,z\in A$.
\end{pro}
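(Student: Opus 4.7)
The plan is to expand the anti-dendriform axioms for $(A,\vartriangleright,\vartriangleleft)$ directly in terms of the dendriform operations $\succ,\prec$ using Eq.~\eqref{eq:q-al}, and then reduce everything modulo the dendriform axioms \eqref{dendriform algebra} to a basis of independent triple monomials.

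First I observe that the associated associative product of the $q$-algebra satisfies $x\vartriangleright y+x\vartriangleleft y=(1+q)(x\succ y+x\prec y)=(1+q)(x\cdot y)$, so it is automatically associative. Hence by the equivalent formulation in Proposition~\ref{anti-dendriform algebra equivalent}, it suffices to verify the three identities in Eq.~\eqref{anti-dendriform algebra equivalent-1}, which comprise the three chained equalities of Eq.~\eqref{eq:defi:anti-dendriform algebras1} together with Eq.~\eqref{eq:defi:anti-dendriform algebras2}. Write $\cdot'$ for the product $\vartriangleright+\vartriangleleft$.

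Next I introduce the abbreviations $A=(x\succ y)\succ z$, $B=(x\prec y)\succ z$, $C=(x\succ y)\prec z$, $D=(x\prec y)\prec z$, $E=x\prec(y\prec z)$. By Eq.~\eqref{dendriform algebra} the remaining triple monomials reduce as $x\succ(y\succ z)=A+B$, $x\succ(y\prec z)=C$, and $x\prec(y\succ z)=D-E$, so every triple product in $\{\succ,\prec\}$ lies in the span of $\{A,B,C,D,E\}$. Expanding the four anti-dendriform identities in this basis is then direct. The middle chained equality $(x\cdot' y)\vartriangleright z = x\vartriangleleft(y\cdot' z)$ will collapse to $(1-q^2)(A+B-D)=0$, which since $q\ne\pm 1$ forces $A+B=D$, i.e., Eq.~\eqref{eq:s1}; and Eq.~\eqref{eq:defi:anti-dendriform algebras2} will collapse to $q(q-1)(B+E-D)=0$, which since $q\ne 0,1$ forces $B=D-E$, i.e., Eq.~\eqref{eq:s2} after reading $D-E$ as $x\prec(y\succ z)$ via Eq.~\eqref{dendriform algebra}.

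Finally, the remaining two equalities in the chain of Eq.~\eqref{eq:defi:anti-dendriform algebras1} expand respectively to
\begin{equation*}
(2+q)(A+B)+q(2+q)(C+D)+(q^2-q)E=0
\end{equation*}
and
\begin{equation*}
(2+q)D+q(2q+1)A+q(q+2)B+q(q+2)C=0.
\end{equation*}
Substituting $A+B=D$ into the first yields $(q^2+3q+2)D+(q^2+2q)C+(q^2-q)E=0$; substituting $A+B=D$ together with $A=E$ (which follows from $B=D-E$ and $A+B=D$) into the second produces the same identity. Rewriting $C=x\succ(y\prec z)$ via Eq.~\eqref{dendriform algebra} gives precisely Eq.~\eqref{dendri-anti-dendri}, and every step is reversible. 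The only real obstacle is the bookkeeping of verifying that the two ends of the chain collapse to the same relation modulo Eqs.~\eqref{eq:s1} and \eqref{eq:s2}; once the five-monomial basis is fixed, everything else is mechanical substitution.
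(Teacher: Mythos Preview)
Your computations are correct and the approach is essentially the same as the paper's: both expand the anti-dendriform axioms for the $q$-algebra directly in terms of the dendriform operations and reduce modulo the dendriform identities. Your use of an explicit basis $\{A,B,C,D,E\}$ for the triple monomials is a slightly cleaner organization than the paper's sequence of successive differences, and your extraction of Eq.~\eqref{eq:s1} from the middle chained equality is more direct than the paper's two-step reduction, but the substance is the same.

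One expository slip worth fixing: you invoke Proposition~\ref{anti-dendriform algebra equivalent} and say it suffices to check ``the three identities in Eq.~\eqref{anti-dendriform algebra equivalent-1}'', but those three are the \emph{first} and \emph{last} equalities of the chain in Eq.~\eqref{eq:defi:anti-dendriform algebras1} together with Eq.~\eqref{eq:defi:anti-dendriform algebras2}, not the middle one. You then proceed to check the \emph{middle} chained equality (to get Eq.~\eqref{eq:s1}) as well as the other three, so in effect you are verifying the full Definition~\ref{defi:anti-dendriform algebras} directly rather than using the reduction of Proposition~\ref{anti-dendriform algebra equivalent}. This is harmless (checking all four is certainly sufficient, and associativity is automatic as you note), but the sentence describing the three identities as ``the three chained equalities \dots\ together with Eq.~\eqref{eq:defi:anti-dendriform algebras2}'' miscounts and should be rephrased.
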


\begin{proof}
Let $x,y,z\in A$.  By Eq.~(\ref{eq:q-al}) and the definition of
dendriform algebras, we have
{\small \begin{eqnarray}
&&x\vartriangleright(y\vartriangleright z)+(x\vartriangleright y)\vartriangleright z+(x\vartriangleleft y)\vartriangleright z\notag\\
&&=2x\succ(y\succ z)+q\big(2(x\succ y)\prec z+x\prec (y\succ z)+(x\prec y)\succ z+(x\succ y)\succ z+(x\prec y)\prec z\big)\notag\\
&&\hspace{0.4cm} +q^2\big(x\prec (y\prec z)+(x\prec y)\prec z+(x\succ y)\prec z\big),\label{anti-dendriform-2}\\
&&(x\vartriangleleft y)\vartriangleleft z+ x\vartriangleleft (y\vartriangleright z)+ x\vartriangleleft (y\vartriangleleft z)\notag\\
&&=2(x\prec y)\prec z+q\big(2x\succ (y\prec z)+(x\prec y)\succ z+x\prec (y\prec z)+x\succ (y\succ z)+x\prec (y\succ z)\big)\notag\\
&&\hspace{0.4cm}+q^2\big((x\succ y)\succ z+x\succ (y\prec z)+x\succ (y\succ z)\big),\label{anti-dendriform-3}\\
&&x\vartriangleright(y\vartriangleright z)-(x\vartriangleleft y)\vartriangleleft z\notag\\
&&=x\succ(y\succ z)-(x\prec y)\prec z+q\big(x\prec (y\succ z)-(x\prec y)\succ z\big)+q^2\big(x\prec (y\prec z)-(x\succ y)\succ z\big),\label{anti-dendriform-4}\\
&&(x\vartriangleright y)\vartriangleleft z-x\vartriangleright (y\vartriangleleft z)\notag\\
&&=q\big((x\prec y)\prec z+(x\succ y)\succ z-x\succ (y\succ
z)-x\prec (y\prec z)\big)+q^2\big((x\prec y)\succ z-x\prec (y\succ
z)\big).\label{anti-dendriform-5}
\end{eqnarray}
} Therefore $(A,\vartriangleright,\vartriangleleft)$ is an
anti-dendriform algebra if and only if the right hand sides of
Eqs.~(\ref{anti-dendriform-2})-(\ref{anti-dendriform-5}) are zero.
Next we assume that the right hand sides of
Eqs.~(\ref{anti-dendriform-2})-(\ref{anti-dendriform-5}) are zero
and we still denote them by
Eqs.~(\ref{anti-dendriform-2})-(\ref{anti-dendriform-5})
respectively. Thus we have the following interpretation.
\begin{enumerate}
\item The difference between Eq.~\eqref{anti-dendriform-2} and
Eq.~\eqref{anti-dendriform-3} is
\begin{eqnarray}\label{anti-dendriform-6}
&&2x\succ(y\succ z)-2(x\prec y)\prec z+q\big((x\succ y)\succ z+(x\prec y)\prec z-x\prec (y\prec z)-x\succ (y\succ z)\big)\notag\\
&&\hspace{0.4cm}+q^2\big(x\prec (y\prec z)+(x\prec y)\prec z-(x\succ y)\succ z-x\succ (y\succ z)\big)\\
&&=(2-q-q^2)\big(x\succ(y\succ z)-(x\prec y)\prec
z\big)+(q-q^2)\big((x\succ y)\succ z-x\prec (y\prec
z)\big)=0.\notag
\end{eqnarray}
\item The difference between Eq.~\eqref{anti-dendriform-6} and
Eq.~\eqref{anti-dendriform-5} is
\begin{align}\label{anti-dendriform-7}
&(2-2q^2)\big(x\succ(y\succ z)-(x\prec y)\prec z\big)=0.
\end{align}
By the assumption of $q$, Eq.~(\ref{anti-dendriform-7}) holds if
and only if Eq.~(\ref{eq:s1}) holds.

\item Suppose that Eqs.~\eqref{anti-dendriform-6} and
\eqref{eq:s1} hold. Then Eq.~\eqref{anti-dendriform-4} holds if
and only if the following equation holds:
\begin{equation*}\label{anti-dendriform-8}
x\prec (y \succ z)-(x\prec y)\succ z=0,
\end{equation*}
that is, Eq.~(\ref{eq:s2}) holds.

\item Suppose that Eqs.~\eqref{eq:s1}
and \eqref{eq:s2} hold. Then by the
definition of dendriform algebras, we have
\begin{enumerate}
\item Eq.~\eqref{anti-dendriform-6} holds;
\item Eq.~\eqref{anti-dendriform-3} holds
if and only if the following equation holds:
\begin{align*}
(q^2+3q+2)(x\prec y)\prec z+(q^2+2q)x\succ (y\prec
z)+(q^2-q)x\prec (y\prec z)=0,
\end{align*}
that is, Eq.~(\ref{dendri-anti-dendri}) holds.
\end{enumerate}
\end{enumerate}
Therefore $(A,\vartriangleright,\vartriangleleft)$ is an
anti-dendriform algebra if and only if the following equivalences
hold:
\begin{eqnarray*}
{\rm
Eqs.}~\eqref{anti-dendriform-2},~\eqref{anti-dendriform-3},~\eqref{anti-dendriform-4}
{\rm \ and\ }\eqref{anti-dendriform-5} {\rm \ hold}.
&\Longleftrightarrow\ &{\rm Eqs.}~\eqref{eq:s1},~\eqref{anti-dendriform-3},~\eqref{anti-dendriform-4}{\rm \ and\ }~\eqref{anti-dendriform-6}{\rm \ hold}.\\
&\Longleftrightarrow\ &{\rm Eqs.}~\eqref{eq:s1},~\eqref{eq:s2}{\rm
\ and\ }  \eqref{dendri-anti-dendri}{\rm \ hold}.
\end{eqnarray*}
Therefore the conclusion holds.
\end{proof}

\begin{pro}\label{anti-dendriform-dendriform}
Suppose that $(A,\vartriangleright,\vartriangleleft)$ is an
anti-dendriform algebra. Denote by $(A,\succ,\prec)$ the
$-q$-algebra of $(A, \vartriangleright,\vartriangleleft).$ Then
$(A,\succ,\prec)$ is a dendriform algebra if and only if $(A,
\vartriangleright,\vartriangleleft)$ satisfies the following
equations:
\begin{equation}\label{eq:anti-s1}
(x\vartriangleleft y)\vartriangleright z=x\vartriangleleft
(y\vartriangleright z),
\end{equation}
\begin{equation}\label{anti-dendri-dendri}
(-q^2+q+2)(x\vartriangleleft y)\vartriangleleft
z-q^2(x\vartriangleright y)\vartriangleleft
z+(q^2+q)x\vartriangleleft (y\vartriangleleft z)=0,
\end{equation}
for all $ x,y,z\in A$.
\end{pro}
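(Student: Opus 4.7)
The plan is to mimic the proof strategy of Proposition~\ref{dendriform-anti-dendriform} in the reverse direction: expand each dendriform axiom for the $-q$-algebra $(A,\succ,\prec)$ in terms of $\vartriangleright,\vartriangleleft$, then collapse the resulting triple products using the anti-dendriform identities~\eqref{eq:defi:anti-dendriform algebras1} and~\eqref{eq:defi:anti-dendriform algebras2}, and finally read off which residual identities on $(A,\vartriangleright,\vartriangleleft)$ are needed.

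First I would substitute $x\succ y = x\vartriangleright y - q x\vartriangleleft y$ and $x\prec y = x\vartriangleleft y - q x\vartriangleright y$ into each of the three dendriform axioms in~\eqref{dendriform algebra}. It is convenient to note in advance that $x\cdot y = x\succ y + x\prec y = (1-q)(x\vartriangleright y + x\vartriangleleft y) = (1-q)\,x\bullet y$, where $\bullet$ denotes the associated associative product of $(A,\vartriangleright,\vartriangleleft)$. Each axiom then becomes a linear combination, with coefficients polynomial in $q$, of the eight triple products $(x\vartriangleright y)\vartriangleright z$, $(x\vartriangleright y)\vartriangleleft z$, $(x\vartriangleleft y)\vartriangleright z$, $(x\vartriangleleft y)\vartriangleleft z$, $x\vartriangleright(y\vartriangleright z)$, $x\vartriangleright(y\vartriangleleft z)$, $x\vartriangleleft(y\vartriangleright z)$, $x\vartriangleleft(y\vartriangleleft z)$.

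Next I would reduce using Definition~\ref{defi:anti-dendriform algebras}: the chain in~\eqref{eq:defi:anti-dendriform algebras1} collapses $x\vartriangleright(y\vartriangleright z)$, $(x\vartriangleleft y)\vartriangleleft z$, $-(x\vartriangleright y)\vartriangleright z-(x\vartriangleleft y)\vartriangleright z$ and $-x\vartriangleleft(y\vartriangleright z)-x\vartriangleleft(y\vartriangleleft z)$ all to a single quantity, and~\eqref{eq:defi:anti-dendriform algebras2} identifies $(x\vartriangleright y)\vartriangleleft z = x\vartriangleright(y\vartriangleleft z)$. Using these, only four ``independent'' triple products remain (say $x\vartriangleright(y\vartriangleright z)$, $(x\vartriangleleft y)\vartriangleright z$, $x\vartriangleright(y\vartriangleleft z)$ and $x\vartriangleleft(y\vartriangleleft z)$), and each of the three axioms for $(A,\succ,\prec)$ becomes an identity of the form ``(some polynomial in $q$)\,$\cdot$\,(one of these remaining products) $=0$''.

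I expect the first dendriform axiom $x\succ(y\succ z)=(x\cdot y)\succ z$ and the second $(x\prec y)\prec z = x\prec(y\cdot z)$ to yield, after simplification with~\eqref{eq:defi:anti-dendriform algebras1}--\eqref{eq:defi:anti-dendriform algebras2} and after taking a suitable difference/sum, exactly the relation~\eqref{anti-dendri-dendri}; the third axiom $(x\succ y)\prec z=x\succ(y\prec z)$ should, after the same reductions, be equivalent modulo~\eqref{anti-dendri-dendri} to the single identity $(x\vartriangleleft y)\vartriangleright z = x\vartriangleleft(y\vartriangleright z)$, which is~\eqref{eq:anti-s1}. Conversely, assuming~\eqref{eq:anti-s1} and~\eqref{anti-dendri-dendri} together with the anti-dendriform axioms, all three dendriform axioms should hold by reversing these computations.

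The main obstacle will be purely computational: accurately tracking the polynomial coefficients in $q$ that arise from the $-q$-substitution and correctly identifying which anti-dendriform reductions produce the precise coefficients $-q^2+q+2$, $-q^2$ and $q^2+q$ appearing in~\eqref{anti-dendri-dendri}. The assumption $q\ne 0,\pm 1$ (ensuring $(1-q^2)\ne 0$ and $q\ne 0$) will be used exactly as in Proposition~\ref{dendriform-anti-dendriform} to divide out nonzero scalar factors and isolate the essential identities; the logical skeleton mirrors the equivalence chain at the end of that proof.
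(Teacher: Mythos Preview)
Your approach is essentially the paper's: expand each dendriform axiom of the $-q$-algebra in terms of $\vartriangleright,\vartriangleleft$, reduce via the anti-dendriform identities, and read off the residual conditions. One small correction to your anticipated logical flow: in the paper the third dendriform axiom $(x\succ y)\prec z=x\succ(y\prec z)$ reduces \emph{directly} to $(q^2+q)\bigl((x\vartriangleleft y)\vartriangleright z-x\vartriangleleft(y\vartriangleright z)\bigr)=0$, hence to~\eqref{eq:anti-s1} by $q\ne 0,-1$, without any appeal to~\eqref{anti-dendri-dendri}; then the difference of the first two expanded axioms turns out to equal this same expression, so once~\eqref{eq:anti-s1} holds they become equivalent to each other, and either one (given~\eqref{eq:anti-s1}) collapses to~\eqref{anti-dendri-dendri}. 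So the dependency is the reverse of what you guessed (axiom three is the easy one), but this is a detail you would discover immediately upon carrying out the computation, and it does not affect the correctness of your plan.
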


\begin{proof}
\delete{By definition of the $-q$-algebra, for any $x,y,z\in A,$ we firstly get
\begin{align*}
x\succ(y\succ z)&=x\vartriangleright (y\vartriangleright z)-qx\vartriangleright (y\vartriangleleft z)-qx\vartriangleleft (y\vartriangleright z)+q^2x\vartriangleleft (y\vartriangleleft z),\notag\\
(x\succ y)\succ z&=(x\vartriangleright y)\vartriangleright z-q(x\vartriangleleft y)\vartriangleright z-q(x\vartriangleright y)\vartriangleleft z+q^2(x\vartriangleleft y)\vartriangleleft z,\notag\\
(x\prec y)\succ z&=(x\vartriangleleft y)\vartriangleright z-q(x\vartriangleright y)\vartriangleright z -q(x\vartriangleleft y)\vartriangleleft z+q^2(x\vartriangleright y)\vartriangleleft z,\notag\\
(x\prec y)\prec z&=(x\vartriangleleft y)\vartriangleleft z-q(x\vartriangleright y)\vartriangleleft z-q(x\vartriangleleft y)\vartriangleright z+q^2(x\vartriangleright y)\vartriangleright z,\notag\\
x\prec(y\prec z)&=x\vartriangleleft (y\vartriangleleft z)-qx\vartriangleleft (y\vartriangleright z)-qx\vartriangleright (y\vartriangleleft z)+q^2x\vartriangleright (y\vartriangleright z),\notag\\
x\prec(y\succ z)&=x\vartriangleleft (y\vartriangleright z)-qx\vartriangleleft (y\vartriangleleft z)-qx\vartriangleright (y\vartriangleright z)+q^2x\vartriangleright (y\vartriangleleft z),\notag\\
(x\succ y)\prec z&=(x\vartriangleright y)\vartriangleleft z-q(x\vartriangleleft y)\vartriangleleft z-q(x\vartriangleright y)\vartriangleright z+q^2(x\vartriangleleft y)\vartriangleright z,\notag\\
x\succ(y\prec z)&=x\vartriangleright (y\vartriangleleft z)-qx\vartriangleright (y\vartriangleright z)-qx\vartriangleleft (y\vartriangleleft z)+q^2x\vartriangleleft (y\vartriangleright z),\notag
\end{align*}
which imply} Let $x,y,z\in A$.  By the  definitions of $q$-algebras
and anti-dendriform algebras, we have {\small \begin{eqnarray}
&&x\succ(y\succ z)-(x\succ y)\succ z-(x\prec y)\succ z\notag\\
&&=2x\vartriangleright (y\vartriangleright z)+q\big(x\vartriangleleft (y\vartriangleright z)+(x\vartriangleright y)\vartriangleright z+(x\vartriangleleft y)\vartriangleleft z-(x\vartriangleleft y)\vartriangleright z\big)\notag\\
&&\hspace{0.4cm} +q^2\big(x\vartriangleleft (y\vartriangleleft z)-(x\vartriangleleft y)\vartriangleleft z-(x\vartriangleright y)\vartriangleleft z\big),\label{dendriform algebra-1}\\
&&(x\prec y)\prec z-x\prec(y\prec z)-x\prec(y\succ z)\notag\\
&&=2(x\vartriangleleft y)\vartriangleleft z+q\big(x\vartriangleleft (y\vartriangleright z)+x\vartriangleleft (y\vartriangleleft z)+x\vartriangleright (y\vartriangleright z)-(x\vartriangleleft y)\vartriangleright z\big)\notag\\
&&\hspace{0.4cm} +q^2\big((x\vartriangleright y)\vartriangleright z-x\vartriangleright (y\vartriangleright z)-x\vartriangleright (y\vartriangleleft z)\big),\label{dendriform algebra-2}\\
&&(x\succ y)\prec z-x\succ(y\prec z)=(q^2+q)\big((x\vartriangleleft y)\vartriangleright z-x\vartriangleleft (y\vartriangleright z)\big).\label{dendriform algebra-3}
\end{eqnarray}}
So $(A,\succ,\prec)$ is a dendriform algebra if and only if the right hand sides of Eq.~(\ref{dendriform algebra-1})-(\ref{dendriform algebra-3}) are zero.
Now we assume that the right hand sides of
Eqs.~(\ref{dendriform algebra-1})-(\ref{dendriform algebra-3}) are zero
and we still denote them by
Eqs.~(\ref{dendriform algebra-1})-(\ref{dendriform algebra-3})
respectively. Thus we have the following interpretation.
\delete{\begin{align*}
&x\succ(y\succ z)-(x\succ y)\succ z-(x\prec y)\succ z=0,\\
&(x\prec y)\prec z-x\prec(y\prec z)-x\prec(y\succ z)=0,\\
&(x\succ y)\prec z-x\succ(y\prec z)=0,
\end{align*}
which are equivalent to
\begin{align}
&2x\vartriangleright (y\vartriangleright z)+q\big(x\vartriangleleft (y\vartriangleright z)+(x\vartriangleright y)\vartriangleright z+(x\vartriangleleft y)\vartriangleleft z-(x\vartriangleleft y)\vartriangleright z\big)\notag\\
&+q^2\big(x\vartriangleleft (y\vartriangleleft z)-(x\vartriangleleft y)\vartriangleleft z-(x\vartriangleright y)\vartriangleleft z\big)=0,\label{dendriform-2}\\
&2(x\vartriangleleft y)\vartriangleleft z+q\big(x\vartriangleleft (y\vartriangleright z)+x\vartriangleleft (y\vartriangleleft z)+x\vartriangleright (y\vartriangleright z)-(x\vartriangleleft y)\vartriangleright z\big)\notag\\
&+q^2\big((x\vartriangleright y)\vartriangleright z-x\vartriangleright (y\vartriangleright z)-x\vartriangleright (y\vartriangleleft z)\big)=0,\label{dendriform-3}\\
&q\big(x\vartriangleleft (y\vartriangleleft z)-(x\vartriangleright y)\vartriangleright z\big)+q^2\big((x\vartriangleleft y)\vartriangleright z-x\vartriangleleft (y\vartriangleright z)\big)=0.\label{dendriform-4}
\end{align}}
\begin{enumerate}
\item By the assumption of $q$, Eq.~(\ref{dendriform algebra-3})
holds if and only if Eq.~(\ref{eq:anti-s1}) holds. \item By the
definition of anti-dendriform algebras, the difference between
Eq.~(\ref{dendriform algebra-1}) and Eq.~(\ref{dendriform
algebra-2}) is Eq.~(\ref{dendriform algebra-3}). Therefore after
supposing that Eq.~(\ref{eq:anti-s1}) holds,  we show that
Eq.~(\ref{dendriform algebra-1}) holds if and only if
Eq.~(\ref{dendriform algebra-2}) holds. \item  Suppose that
Eq.~(\ref{eq:anti-s1}) holds. By the definition of anti-dendriform
algebras again, Eq.~(\ref{dendriform algebra-1}) holds if and only
if the following equation holds:
$$
(-q^2+q+2)(x\vartriangleleft y)\vartriangleleft z+(q^2+q)x\vartriangleleft (y\vartriangleleft z)-q^2(x\vartriangleright y)\vartriangleleft z=0,$$
that is, Eq.~(\ref{anti-dendri-dendri}) holds.
\end{enumerate}
Hence $(A,\succ,\prec)$ is a dendriform algebra if and only if the
following equivalences hold:
\begin{eqnarray*}
{\rm
Eqs.}~\eqref{dendriform algebra-1},~\eqref{dendriform algebra-2},{\rm \ and\ }\eqref{dendriform algebra-3}{\rm \ hold}.
&\Longleftrightarrow\ &{\rm Eqs.}~\eqref{eq:anti-s1}{\rm \ and\ } ~\eqref{dendriform algebra-1}{\rm \ hold}.\\
&\Longleftrightarrow\ &{\rm Eqs.}~\eqref{eq:anti-s1}{\rm \ and\ } ~\eqref{anti-dendri-dendri}{\rm \ hold}.
\end{eqnarray*}
This completes the proof.
\end{proof}



\begin{thm}\label{main-1}
Let $A$ be a vector space with two bilinear operations
$\succ,\prec$. Then $(A,\succ,\prec)$ is a dendriform algebra
satisfying Eqs.~\eqref{eq:s1}-\eqref{dendri-anti-dendri} if and
only if its $q$-algebra $(A,\vartriangleright,\vartriangleleft)$
is an anti-dendriform algebra  satisfying
Eqs.~\eqref{eq:anti-s1}-\eqref{anti-dendri-dendri}.
\end{thm}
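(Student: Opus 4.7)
The plan is to deduce the theorem from the two biconditional Propositions~\ref{dendriform-anti-dendriform} and~\ref{anti-dendriform-dendriform}, bridged by the scaling duality in Remark~\ref{rem.3.2}. The key observation is that when $(A,\vartriangleright,\vartriangleleft)$ is the $q$-algebra of $(A,\succ,\prec)$, the $-q$-algebra of $(A,\vartriangleright,\vartriangleleft)$ has operations $x\vartriangleright y - q\,x\vartriangleleft y = (1-q^2)\,x\succ y$ and $x\vartriangleleft y - q\,x\vartriangleright y = (1-q^2)\,x\prec y$. Because the hypothesis $q\ne\pm1$ guarantees $1-q^2\ne 0$, the $-q$-algebra of $(A,\vartriangleright,\vartriangleleft)$ is the uniformly rescaled pair $(A,(1-q^2)\succ,(1-q^2)\prec)$. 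Since the dendriform axioms in~\eqref{dendriform algebra} and Eqs.~\eqref{eq:s1}--\eqref{dendri-anti-dendri} are trilinear identities of total degree $2$ in the operation symbols, they hold for $(A,(1-q^2)\succ,(1-q^2)\prec)$ if and only if they hold for $(A,\succ,\prec)$.

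For the forward direction, I would assume $(A,\succ,\prec)$ is a dendriform algebra satisfying Eqs.~\eqref{eq:s1}--\eqref{dendri-anti-dendri}. By Proposition~\ref{dendriform-anti-dendriform}, the $q$-algebra $(A,\vartriangleright,\vartriangleleft)$ is then anti-dendriform. Its $-q$-algebra equals the rescaled $(A,(1-q^2)\succ,(1-q^2)\prec)$, which is dendriform because $(A,\succ,\prec)$ is. Proposition~\ref{anti-dendriform-dendriform}, read as a biconditional, therefore forces the anti-dendriform algebra $(A,\vartriangleright,\vartriangleleft)$ to satisfy Eqs.~\eqref{eq:anti-s1}--\eqref{anti-dendri-dendri}.

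Conversely, assume $(A,\vartriangleright,\vartriangleleft)$ is anti-dendriform and satisfies Eqs.~\eqref{eq:anti-s1}--\eqref{anti-dendri-dendri}. Proposition~\ref{anti-dendriform-dendriform} then gives that the $-q$-algebra of $(A,\vartriangleright,\vartriangleleft)$ is dendriform; that is, $(A,(1-q^2)\succ,(1-q^2)\prec)$ is dendriform, and hence so is $(A,\succ,\prec)$. Since $(A,\vartriangleright,\vartriangleleft)$ is the $q$-algebra of this dendriform $(A,\succ,\prec)$ and is assumed to be anti-dendriform, Proposition~\ref{dendriform-anti-dendriform} yields Eqs.~\eqref{eq:s1}--\eqref{dendri-anti-dendri}. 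The only real subtlety is the scaling step in the first paragraph; once the homogeneity of the relevant identities is noted, no further computation is required and the theorem becomes a purely logical combination of the two previously established biconditionals.
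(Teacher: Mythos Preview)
Your proof is correct and follows essentially the same approach as the paper's: both apply Proposition~\ref{dendriform-anti-dendriform} to get the anti-dendriform structure, then use Remark~\ref{rem.3.2} together with Proposition~\ref{anti-dendriform-dendriform} (as a biconditional) to obtain Eqs.~\eqref{eq:anti-s1}--\eqref{anti-dendri-dendri}, and argue the converse symmetrically. Your version is in fact more explicit than the paper's, since you spell out the $(1-q^{2})$-rescaling and the homogeneity argument that the paper hides behind the phrase ``the same algebra structure'' in Remark~\ref{rem.3.2}, and you write out the converse direction that the paper dismisses as ``similar.''
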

\begin{proof}
Suppose that $(A,\succ,\prec)$ is a dendriform algebra satisfying
Eqs.~(\ref{eq:s1})-(\ref{dendri-anti-dendri}). Then it is clear
that $(A,\vartriangleright,\vartriangleleft)$ is an
anti-dendriform algebra by Proposition
\ref{dendriform-anti-dendriform}. Furthermore, note that
$-q$-algebra of $(A,\vartriangleright,\vartriangleleft)$ is a
dendriform algebra, thus
Eqs.~(\ref{eq:anti-s1})-(\ref{anti-dendri-dendri}) hold by
Proposition \ref{anti-dendriform-dendriform}, that is,
$(A,\vartriangleright,\vartriangleleft)$ is an anti-dendriform
algebra satisfying
Eqs.~(\ref{eq:anti-s1})-(\ref{anti-dendri-dendri}).
 The converse is
similar.
\end{proof}
\begin{rmk}
Theorem \ref{main-1} is equivalent to the following statement. The
triple $(A,\vartriangleright,\vartriangleleft)$ is an
anti-dendriform algebra satisfying
Eqs.~\eqref{eq:anti-s1}-\eqref{anti-dendri-dendri} if and only if
its $(-q)$-algebra $(A,\succ,\prec)$ is a dendriform algebra
satisfying Eqs.~\eqref{eq:s1}-\eqref{dendri-anti-dendri}.
\end{rmk}

Obviously, for Eq.~(\ref{dendri-anti-dendri}), $q=-2$ is a little
``special" in the sense that only one monomial in $x,y,z$ is left, giving the
following notion.

\begin{defi} Let $(A,\succ,\prec)$ be a dendriform algebra.
Then $(A,\succ,\prec)$ is called a {\bf
Novikov-type dendriform algebra} if
Eqs.~(\ref{eq:s1})-(\ref{eq:s2}) and the following equation hold:
\begin{equation}\label{eq:NTD}x\prec(y\prec z)=0,\quad \forall
x,y,z\in A.\end{equation}
\end{defi}

\begin{pro}\label{pro:equiv}
Let $A$ be a vector space with two bilinear operations $\succ,\prec$. Then
$(A,\succ,\prec)$ is a Novikov-type dendriform algebra if and only if the following equations hold:
\begin{eqnarray}
&&x\succ(y\succ z)=(x\prec y)\prec z=x\prec(y\succ z)=(x\prec y)\succ z,\label{eq:ND1}\\
&&x\succ(y\prec z)=(x\succ y)\prec z,\label{eq:ND2}\\
&&(x\succ y)\succ z=x\prec(y\prec z)=0,\label{eq:ND3},
\end{eqnarray}
for all $x,y,z\in A$.
\end{pro}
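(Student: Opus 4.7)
The plan is to prove both directions by expanding the dendriform axioms
\[
x\succ(y\succ z)=(x\succ y)\succ z+(x\prec y)\succ z,\quad (x\prec y)\prec z=x\prec(y\succ z)+x\prec(y\prec z),
\]
together with $(x\succ y)\prec z=x\succ(y\prec z)$, and then combining them with the three extra defining conditions \eqref{eq:s1}, \eqref{eq:s2}, \eqref{eq:NTD}.

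For the forward direction, assume $(A,\succ,\prec)$ is a Novikov-type dendriform algebra. Then \eqref{eq:NTD} gives $x\prec(y\prec z)=0$ directly, which is the second half of \eqref{eq:ND3}. Substituting \eqref{eq:NTD} into the dendriform axiom for $\prec$ yields $(x\prec y)\prec z=x\prec(y\succ z)$. Combined with \eqref{eq:s1} and \eqref{eq:s2}, this produces the chain of four equalities in \eqref{eq:ND1}. The identity \eqref{eq:ND2} is exactly the third dendriform axiom. Finally, substituting the chain \eqref{eq:ND1} into the dendriform axiom for $\succ$ gives $x\succ(y\succ z)=(x\succ y)\succ z+(x\prec y)\succ z=(x\succ y)\succ z+x\succ(y\succ z)$, forcing $(x\succ y)\succ z=0$, which is the first half of \eqref{eq:ND3}.

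For the reverse direction, assume \eqref{eq:ND1}--\eqref{eq:ND3}. I first verify the three dendriform axioms. The first one reads $x\succ(y\succ z)=(x\succ y)\succ z+(x\prec y)\succ z$; by \eqref{eq:ND3} the first term on the right vanishes, and by \eqref{eq:ND1} the remaining term equals $x\succ(y\succ z)$. The second axiom $(x\prec y)\prec z=x\prec(y\succ z)+x\prec(y\prec z)$ follows similarly from \eqref{eq:ND1} and \eqref{eq:ND3}. The third axiom is precisely \eqref{eq:ND2}. Finally, \eqref{eq:s1} and \eqref{eq:s2} are immediate subidentities of \eqref{eq:ND1}, and \eqref{eq:NTD} is immediate from \eqref{eq:ND3}. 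Hence $(A,\succ,\prec)$ is a Novikov-type dendriform algebra.

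There is no real obstacle here; the whole argument is an exercise in reading off identities from the dendriform axioms once $x\prec(y\prec z)=0$ is available. The only small subtlety to get right is the derivation of $(x\succ y)\succ z=0$ from the combination of \eqref{eq:ND1} with the first dendriform axiom, since that identity is a consequence rather than part of the definition of a Novikov-type dendriform algebra.
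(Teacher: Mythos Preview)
Your proof is correct. The paper's own argument is essentially the same computation, though it phrases matters slightly more abstractly: it treats the eight triple products $(x\succ y)\succ z,\ x\succ(y\succ z),\ldots$ as formal variables, observes that the dendriform axioms together with \eqref{eq:s1}, \eqref{eq:s2}, \eqref{eq:NTD} form a system of linear equations in these variables, and notes that the solution space is exactly described by \eqref{eq:ND1}--\eqref{eq:ND3} with $(x\prec y)\succ z$ and $(x\succ y)\prec z$ as the two free parameters. Your explicit substitutions simply carry out this linear-algebra step by hand, and in particular your derivation of $(x\succ y)\succ z=0$ from the first dendriform axiom plus \eqref{eq:ND1} is precisely the elimination that the paper leaves implicit.
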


\begin{proof}
Let $x,y,z\in A$. Then we set all products involving $x,y,z$ as
variables, that is, there are the following 8 variables {\small
$$(x\succ y)\succ z, x\succ (y\succ z), (x\prec y)\prec z, x\prec
(y\prec z), (x \succ y)\prec z, x\succ (y\prec z), (x\prec y)\succ
z, x\prec (y\succ z).$$} Therefore, Eqs.~(\ref{dendriform
algebra}), (\ref{eq:s1}), (\ref{eq:s2}) and (\ref{eq:NTD}) compose
a set of linear equations in these variables. It is
straightforward to show that the solution of these linear
equations is given by Eqs.~(\ref{eq:ND1})-(\ref{eq:ND3}) with the
two free variables $(x\prec y)\succ z$ and $(x\succ y)\prec z$,
that is, the other variables are the linear combinations of
$(x\prec y)\succ z$ and $(x\succ y)\prec z$. Thus the conclusion
holds.
\end{proof}

For the corresponding case of anti-dendriform algebras, we give the following notion.

\begin{defi} Let
$(A,\vartriangleright,\vartriangleleft)$ be an anti-dendriform algebra. Then
$(A,\vartriangleright,\vartriangleleft)$ is called an {\bf
admissible Novikov-type dendriform algebra} if
Eq.~(\ref{eq:anti-s1}) and the following equation hold:
\begin{equation}\label{eq:NTAD} x\vartriangleleft (y\vartriangleleft z)=2(x\cdot
y)\vartriangleleft z,\;\;\forall x,y,z\in A,\end{equation} where
the bilinear operation $\cdot$ is defined by Eq.~(\ref{eq:asso}),
that is, $x\cdot y=x\vartriangleright y+x\vartriangleleft y$ for
all $x,y\in A$.
\end{defi}

\begin{pro}
Let $A$ be a vector space with two bilinear operations $\vartriangleright,\vartriangleleft$.
Then $(A,\vartriangleright,\vartriangleleft)$ is an admissible Novikov-type dendriform algebra
if and only if the following equations hold:
\begin{eqnarray}
&&(x\vartriangleright y)\vartriangleright z=x\vartriangleleft (y\vartriangleleft z)
        =\frac{2}{3}(x\vartriangleright y)\vartriangleleft z-\frac{2}{3}(x\vartriangleleft y)\vartriangleright z,\label{eq:AND-1}\\
        &&x\vartriangleright (y\vartriangleright z)=(x\vartriangleleft y)\vartriangleleft z
        =-\frac{2}{3}(x\vartriangleright y)\vartriangleleft z-\frac{1}{3}(x\vartriangleleft y)\vartriangleright z,\label{eq:AND2}\\
        &&x\vartriangleright (y\vartriangleleft z)=(x\vartriangleright y)\vartriangleleft z,\label{eq:AND3}\\
 && x\vartriangleleft (y\vartriangleright z)=(x\vartriangleleft y)\vartriangleright z,\label{eq:AND4}
\end{eqnarray}
for all $x,y,z\in A$.
\end{pro}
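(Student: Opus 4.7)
The approach is to imitate the proof of Proposition~\ref{pro:equiv}. I would fix $x,y,z\in A$ and treat the eight cubic expressions
\[
(x\vartriangleright y)\vartriangleright z,\ x\vartriangleright(y\vartriangleright z),\ (x\vartriangleleft y)\vartriangleleft z,\ x\vartriangleleft(y\vartriangleleft z),\ (x\vartriangleright y)\vartriangleleft z,\ x\vartriangleright(y\vartriangleleft z),\ (x\vartriangleleft y)\vartriangleright z,\ x\vartriangleleft(y\vartriangleright z)
\]
as unknowns; abbreviate them $A,B,C,D,E,F,G,H$ respectively. All the defining relations then become linear equations in these eight variables.

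Next, I would write the full list of linear equations. The anti-dendriform axioms~\eqref{eq:defi:anti-dendriform algebras1}--\eqref{eq:defi:anti-dendriform algebras2} give
\[
B=-A-G,\qquad B=-H-D,\qquad B=C,\qquad E=F,
\]
while the two extra conditions~\eqref{eq:anti-s1} and \eqref{eq:NTAD} supply $G=H$ and $D=2E+2C$. I would then solve this inhomogeneous linear system, taking $E$ and $G$ as the free parameters. Substituting $H=G$ and $C=B$ into $B=-H-D$ yields $D=-G-C$; combining this with $D=2E+2C$ forces $G=-3C-2E$, hence $C=-\tfrac23 E-\tfrac13 G$, which is precisely~\eqref{eq:AND2}. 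Then $A=-C-G=\tfrac23 E-\tfrac23 G$, matching the middle equality of~\eqref{eq:AND-1}; and $D=-G-C=\tfrac23 E-\tfrac23 G=A$, giving the first equality of~\eqref{eq:AND-1}. The relations $E=F$ and $G=H$ coincide with~\eqref{eq:AND3} and~\eqref{eq:AND4}.

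For the converse, I would check that~\eqref{eq:AND-1}--\eqref{eq:AND4} imply every anti-dendriform axiom together with~\eqref{eq:anti-s1} and~\eqref{eq:NTAD}. From~\eqref{eq:AND3}--\eqref{eq:AND4}, $E=F$ and $G=H$, giving~\eqref{eq:defi:anti-dendriform algebras2} and~\eqref{eq:anti-s1}. From~\eqref{eq:AND-1}--\eqref{eq:AND2} one computes $B+A+G=\bigl(-\tfrac23 E-\tfrac13 G\bigr)+\bigl(\tfrac23 E-\tfrac23 G\bigr)+G=0$ and $B+H+D=\bigl(-\tfrac23 E-\tfrac13 G\bigr)+G+\bigl(\tfrac23 E-\tfrac23 G\bigr)=0$, together with $B=C$ and $A=D$; this is exactly the chain~\eqref{eq:defi:anti-dendriform algebras1}. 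Finally $D=\tfrac23 E-\tfrac23 G=2\bigl(-\tfrac23 E-\tfrac13 G\bigr)+2E=2C+2E=2(x\cdot y)\vartriangleleft z$, which is~\eqref{eq:NTAD}.

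There is no conceptual obstacle here; the content is a small linear-algebra calculation over eight unknowns. The only care needed is to keep the substitutions straight, in particular to verify the two non-obvious identities $A=D$ and $C=B$ that fuse the four equalities in~\eqref{eq:defi:anti-dendriform algebras1} into the two pairs appearing in~\eqref{eq:AND-1} and~\eqref{eq:AND2}, and to confirm that $E$ and $G$ are genuinely free, i.e.\ the rank of the system is $6$ so that the solution space is two-dimensional, consistent with the two independent monomials appearing on the right-hand sides of~\eqref{eq:AND-1}--\eqref{eq:AND2}.
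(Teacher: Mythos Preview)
Your proposal is correct and follows exactly the approach indicated in the paper, which simply says the proof is similar to that of Proposition~\ref{pro:equiv}; in fact you have spelled out the linear-algebra computation that the paper leaves implicit. One tiny wording slip: the system is homogeneous, not inhomogeneous, but this does not affect the argument.
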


\begin{proof}
It is similar to the one for Proposition~\ref{pro:equiv}.
\end{proof}

\begin{cor}\label{Corollary q=-2}
     Let $A$ be a vector space with two bilinear operations
$\succ,\prec$. The triple $(A,\succ,\prec)$ is a Novikov-type
dendriform algebra if and only if its $-2$-algebra
$(A,\vartriangleright,\vartriangleleft)$ is an admissible
Novikov-type dendriform algebra.
\end{cor}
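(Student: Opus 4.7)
The plan is to derive this corollary as a direct specialization of Theorem~\ref{main-1} at $q=-2$, so the main task reduces to checking that the defining identities of Novikov-type (resp.\ admissible Novikov-type) dendriform algebras exactly match the conditions Eqs.~\eqref{eq:s1}--\eqref{dendri-anti-dendri} (resp.\ Eqs.~\eqref{eq:anti-s1}--\eqref{anti-dendri-dendri}) when $q=-2$.

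First I would unwind Eq.~\eqref{dendri-anti-dendri} at $q=-2$: the coefficients become $q^{2}+3q+2 = 0$, $q^{2}+2q = 0$, and $q^{2}-q = 6$, so the identity collapses to $6\,x\prec(y\prec z)=0$, i.e.\ exactly the vanishing condition Eq.~\eqref{eq:NTD}. Combined with Eqs.~\eqref{eq:s1} and \eqref{eq:s2}, this shows that a dendriform algebra satisfies Eqs.~\eqref{eq:s1}--\eqref{dendri-anti-dendri} with $q=-2$ if and only if it is a Novikov-type dendriform algebra.

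Next I would perform the analogous check on the anti-dendriform side. Substituting $q=-2$ into Eq.~\eqref{anti-dendri-dendri} gives $-4(x\vartriangleleft y)\vartriangleleft z - 4(x\vartriangleright y)\vartriangleleft z + 2\,x\vartriangleleft(y\vartriangleleft z)=0$, which is precisely
\begin{equation*}
x\vartriangleleft(y\vartriangleleft z) \;=\; 2\bigl((x\vartriangleright y)+(x\vartriangleleft y)\bigr)\vartriangleleft z \;=\; 2(x\cdot y)\vartriangleleft z,
\end{equation*}
i.e.\ Eq.~\eqref{eq:NTAD}. Together with Eq.~\eqref{eq:anti-s1}, this identifies the condition at $q=-2$ with the definition of an admissible Novikov-type dendriform algebra.

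With those two coefficient verifications in hand, the corollary follows immediately: Theorem~\ref{main-1} (applied with $q=-2$, which is an allowed value since $q\ne 0,\pm 1$) asserts precisely the equivalence between the two classes, and the remark following Theorem~\ref{main-1} ensures that the correspondence via the $-2$-algebra goes both ways (its inverse being the $2$-algebra of the anti-dendriform side by Remark~\ref{rem.3.2}). There is no substantial obstacle here; the only thing that could go wrong is an arithmetic slip in the coefficients, so I would present the two substitutions carefully and then cite Theorem~\ref{main-1} to conclude.
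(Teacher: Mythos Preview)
Your proposal is correct and follows essentially the same approach as the paper: both specialize the coefficients in Eqs.~\eqref{dendri-anti-dendri} and \eqref{anti-dendri-dendri} at $q=-2$ to identify them with Eqs.~\eqref{eq:NTD} and \eqref{eq:NTAD} respectively, and then invoke Theorem~\ref{main-1}. The only difference is that you spell out the arithmetic explicitly, whereas the paper simply asserts the equivalences.
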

\begin{proof}
Note that when $q=-2$, Eq.~(\ref{eq:NTD}) holds if and only if
Eq.~\eqref{dendri-anti-dendri}  holds, and Eq.~(\ref{eq:NTAD}) holds
if and only if  Eq.~\eqref{anti-dendri-dendri} holds. Hence the
conclusion follows from Theorem~\ref{main-1}.
\end{proof}

\begin{ex}
It is obvious that all ``2-nilpotent" dendriform algebras in the sense that all products involving three elements are zero (see Example~\ref{ex:2-dim}) are Novikov-type
dendriform algebras. In particular, any 2-nilpotent associative algebra $(A,\cdot)$ gives a Novikov-type dendriform algebra $(A,\succ, \prec)$ by letting
$\succ=\cdot, \prec=0$ or $\succ=0,\prec=\cdot$. Accordingly, all ``2-nilpotent" anti-dendriform algebras are admissible Novikov-type dendriform algebras.
In particular,  all complex anti-dendriform algebras in dimensions 1 and 2 which are classified in Examples~\ref{one-dim} and \ref{ex:2-dim} respectively are admissible Novikov-type dendriform algebras. Note that the 3-dimensional anti-dendriform algebras given in Example~\ref{3-dimension} are not admissible Novikov-type dendriform algebras.
\end{ex}


\subsection{More correspondences and their relationships 
}\label{Correspondence-2}

\begin{defi}\label{algebra-q-algebra}
Let $A$ be a vector space with a bilinear operation $\bullet.$
Define a bilinear operation $\diamond$ as
\begin{eqnarray}\label{one-q-algebra}
x\diamond y=x\bullet y+q y\bullet x,\quad \forall x,y\in A.
\end{eqnarray}
Then $(A,\diamond)$ is called a {\bf $q$-algebra} of
$(A,\bullet).$
\end{defi}

Recall that a {\bf pre-Lie algebra} is a vector space $A$ with a
bilinear operation $\ast$ satisfying \begin{equation}(x\ast y)\ast
z-x\ast(y\ast z)=(y\ast x)\ast z-y\ast(x\ast z),\quad \forall
x,y,z\in A.\end{equation}

A \textbf{Novikov algebra} (\cite{Bal,Gel}) is a pre-Lie algebra
$(A,\ast)$ such that
\begin{equation}\label{eq:defi:Novikov algebras1}
(x\ast y)\ast z=(x\ast z)\ast y, \quad\forall x,y,z\in A.
\end{equation}

An \textbf{admissible Novikov algebra} is a vector space with a
bilinear operation $\circ$ satisfying Eq.~(\ref{eq:21}) and the
following equation:
\begin{equation}\label{eq:defi:admissible Novikov algebras1}
2x\circ[y,z]=(x\circ y)\circ z-(x\circ z)\circ y, \;\; \forall
x,y,z\in A.
\end{equation}
It is known that an admissible Novikov algebra is an anti-pre-Lie
algebra (\cite{LB}).

\begin{pro}\label{pre-Lie-anti-pre-Lie}
Let $(A,\ast)$ be a pre-Lie algebra. Denote by $(A,\circ)$ the
$(-q)$-algebra of $(A,\ast).$ Then $(A,\circ)$ is an anti-pre-Lie
algebra if and only if the following equation holds:
\begin{align}\label{pre-Lie-anti}
(2+q)[x,y]\ast z+(-q^2-2q)z\ast[x,y]+(q^2-q)((z\ast y)\ast x-(z\ast x)\ast y)=0,\ \ \forall x,y,z\in A,
\end{align}
where $[x,y]=x\ast y-y\ast x$.
\end{pro}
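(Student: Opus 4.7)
The plan is as follows. First I would note that the commutator of $\circ$ is
\[
x\circ y - y\circ x = (x\ast y - qy\ast x) - (y\ast x - qx\ast y) = (1+q)[x,y],
\]
so the commutator bracket of $\circ$ is a nonzero scalar multiple of the bracket of the sub-adjacent Lie algebra of $(A,\ast)$. Hence $(A,\circ)$ is automatically Lie-admissible, and by the equivalent characterization of anti-pre-Lie algebras just after Eq.~\eqref{eq:22}, being anti-pre-Lie is equivalent to Eq.~\eqref{eq:21} alone, namely
\[
x\circ(y\circ z) - y\circ(x\circ z) = [y,x]_\circ \circ z = (1+q)[y,x]\circ z,
\]
where $[y,x]_\circ = y\circ x - x\circ y$.

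Next I would expand the left-hand side in terms of $\ast$. A direct computation gives
\[
x\circ(y\circ z) - y\circ(x\circ z) = \bigl(x\ast(y\ast z)-y\ast(x\ast z)\bigr) - q\bigl(x\ast(z\ast y)-y\ast(z\ast x)\bigr)
\]
\[
- q\bigl((y\ast z)\ast x-(x\ast z)\ast y\bigr) + q^2\bigl((z\ast y)\ast x-(z\ast x)\ast y\bigr).
\]
Similarly, $(1+q)[y,x]\circ z = -(1+q)[x,y]\ast z + q(1+q)\, z\ast[x,y]$. The pre-Lie identity yields
\[
x\ast(y\ast z) - y\ast(x\ast z) = [x,y]\ast z,
\]
and, applied to the pairs $(x,z)$ and $(y,z)$, it also gives
\[
x\ast(z\ast y) - y\ast(z\ast x) = z\ast[x,y] + [x,z]\ast y + [z,y]\ast x.
\]

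The key step is then to plug in these simplifications, rewrite $[x,z]\ast y$ and $[z,y]\ast x$ as differences of $\ast$-products, and collect terms. The monomials of the form $(x\ast z)\ast y$, $(z\ast x)\ast y$, $(y\ast z)\ast x$, $(z\ast y)\ast x$ coming from the $-q\bigl(x\ast(z\ast y)-y\ast(z\ast x)\bigr)$ term will partially cancel against the $-q\bigl((y\ast z)\ast x-(x\ast z)\ast y\bigr)$ term, leaving only the combination $(z\ast x)\ast y - (z\ast y)\ast x$ together with a $-q\, z\ast[x,y]$ contribution. Combining with the $q^2$ term and the $-q(1+q)\,z\ast[x,y]$ from the right-hand side, the total becomes exactly
\[
(2+q)[x,y]\ast z + (-q^2-2q)\,z\ast[x,y] + (q^2-q)\bigl((z\ast y)\ast x - (z\ast x)\ast y\bigr).
\]
Hence $(A,\circ)$ is anti-pre-Lie iff this expression vanishes for all $x,y,z \in A$, which is Eq.~\eqref{pre-Lie-anti}.

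The main obstacle is purely bookkeeping: making sure all eight ``depth-two'' monomials in $x,y,z$ under $\ast$ are tracked correctly when the cross-term $x\ast(z\ast y)-y\ast(z\ast x)$ is rewritten via the pre-Lie identity, and verifying that no residual terms survive beyond the three advertised ones. There is no conceptual difficulty once one observes the Lie-admissibility reduction in the first step.
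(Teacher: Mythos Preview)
Your proposal is correct and follows essentially the same route as the paper: first observe $[x,y]_\circ=(1+q)[x,y]$ so that Lie-admissibility is automatic, then reduce the anti-pre-Lie condition to Eq.~\eqref{eq:21}, expand both sides in terms of $\ast$, and simplify using the pre-Lie identity to arrive at Eq.~\eqref{pre-Lie-anti}. The paper compresses the expansion and cancellation into a single displayed identity, whereas you spell out the intermediate rewriting of $x\ast(z\ast y)-y\ast(z\ast x)$, but the argument is the same.
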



\begin{proof}
Let $x,y,z\in A$. By Eq.~(\ref{one-q-algebra}), we have
$$[x,y]_\circ=x\circ y-y\circ x=x\ast y-qy\ast x-y\ast x+qx\ast y=(1+q)[x,y].$$
So $(A,\circ)$ is a Lie-admissible algebra. Furthermore, by Eq.~(\ref{one-q-algebra}) and the definition of pre-Lie algebras,
we have
\begin{eqnarray}
&&x\circ(y\circ z)-y\circ(x\circ z)-[y,x]_\circ\circ z\notag\\
&&=(2+q)[x,y]\ast z+(-q^2-2q)z\ast[x,y]+(q^2-q)\big((z\ast y)\ast x-(z\ast x)\ast y\big).\label{pre-Lie-anti-1}
\end{eqnarray}
Therefore $(A,\circ)$ is an anti-pre-Lie algebra if and only if
the right hand side of Eq.~(\ref{pre-Lie-anti-1}) is zero. Hence
the conclusion follows.
\end{proof}

\begin{rmk}
Note that when $q=-2$, Eq.~(\ref{pre-Lie-anti}) holds if and only
if Eq.~(\ref{eq:defi:Novikov algebras1}) holds, that is, in this
case, a pre-Lie algebra satisfying Eq.~(\ref{pre-Lie-anti}) is
exactly a Novikov algebra.
\end{rmk}

\begin{pro}\label{anti-pre-Lie-pre-Lie}
Let $(A,\circ)$  be an anti-pre-Lie algebra. Denote by $(A,\ast)$ the $q$-algebra of $(A,\circ).$
Then $(A,\ast)$ is a pre-Lie algebra if and only if the following equation holds:
\begin{align}\label{anti-pre-Lie-pre}
(2+q)[x,y]_\circ\circ z-q^2z\circ[x,y]_\circ+(q^2+q)\big((z\circ x)\circ y-(z\circ y)\circ x\big)=0,\ \ \forall x,y,z\in A,
\end{align}
where $[x,y]_\circ=x\circ y-y\circ x$.
\end{pro}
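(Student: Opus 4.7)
This proposition is the direct dual of Proposition~\ref{pre-Lie-anti-pre-Lie}: we interchange the roles of pre-Lie and anti-pre-Lie and move from the $(-q)$-algebra to the $q$-algebra. Accordingly, the plan is to compute the pre-Lie associator deficit of $\ast$,
\begin{equation*}
E(x,y,z):=(x\ast y)\ast z-x\ast(y\ast z)-(y\ast x)\ast z+y\ast(x\ast z),
\end{equation*}
directly in terms of $\circ$ by substituting $x\ast y=x\circ y+qy\circ x$, reduce the resulting expression using the two defining identities~\eqref{eq:21} and~\eqref{eq:22} of anti-pre-Lie algebras, and identify the outcome with the left-hand side of~\eqref{anti-pre-Lie-pre}. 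Since $(A,\ast)$ is a pre-Lie algebra if and only if $E(x,y,z)=0$ for all $x,y,z\in A$, the equivalence then follows at once.

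Expanding $E(x,y,z)$ produces sixteen $\circ$-monomials, which I would sort into three natural groups: (i)~the four terms $(u\circ v)\circ z$ with $u,v\in\{x,y\}$, which combine by bilinearity into $(1-q)[x,y]_\circ\circ z$; (ii)~the four terms $z\circ(u\circ v)$ with $u,v\in\{x,y\}$, which combine into $q(1-q)z\circ[x,y]_\circ$; and (iii)~the remaining eight terms, in each of which $x$, $y$, and $z$ appear and which require genuine use of the anti-pre-Lie axioms. For group~(iii) the three reductions I plan to use are $y\circ(x\circ z)-x\circ(y\circ z)=[x,y]_\circ\circ z$ (immediate from~\eqref{eq:21}); two applications of~\eqref{eq:21} combined with~\eqref{eq:22} yielding
\begin{equation*}
x\circ(z\circ y)-y\circ(z\circ x)=z\circ[x,y]_\circ-[x,y]_\circ\circ z;
\end{equation*}
and a rewriting of inner products via commutators followed by~\eqref{eq:22} giving
\begin{equation*}
(x\circ z)\circ y-(y\circ z)\circ x=\bigl((z\circ x)\circ y-(z\circ y)\circ x\bigr)+[x,y]_\circ\circ z.
\end{equation*}
Substituting these identities back and collecting the coefficients of $[x,y]_\circ\circ z$, $z\circ[x,y]_\circ$, and $(z\circ x)\circ y-(z\circ y)\circ x$ should produce exactly $2+q$, $-q^2$, and $q^2+q$, respectively, matching~\eqref{anti-pre-Lie-pre}.

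The main obstacle is pure bookkeeping: the expansion has sixteen signed monomials with various powers of $q$, and the application of~\eqref{eq:22} in the form $[z,x]_\circ\circ y+[y,z]_\circ\circ x=-[x,y]_\circ\circ z$ must be done with care, since mixing one sign derails the final coefficients. No conceptually new input is needed beyond~\eqref{eq:21} and~\eqref{eq:22}; as an alternative route, one could even try to deduce the statement from Proposition~\ref{pre-Lie-anti-pre-Lie} via Remark~\ref{rem.3.2}, which makes the passage between $(A,\circ)$ and its $q$-algebra genuinely reversible when $q\neq 0,\pm 1$.
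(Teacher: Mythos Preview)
Your proposal is correct and follows essentially the same approach as the paper: both compute the pre-Lie associator deficit $E(x,y,z)$ of $\ast$ directly in terms of $\circ$ using the anti-pre-Lie identities~\eqref{eq:21} and~\eqref{eq:22}, obtaining exactly the right-hand side of~\eqref{anti-pre-Lie-pre}. The paper simply asserts the resulting identity in one line, whereas your plan spells out the grouping and the three auxiliary reductions that make the computation transparent.
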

\begin{proof}
Let $x,y,z\in A$. By Eq.~(\ref{one-q-algebra}) and the definition
of anti-pre-Lie algebras, we have
\begin{eqnarray}
&&(x\ast y)\ast z-x\ast(y\ast z)-(y\ast x)\ast z+y\ast(x\ast z)\notag\\
&&=(2+q)[x,y]_\circ\circ z-q^2z\circ[x,y]_\circ+(q^2+q)((z\circ x)\circ y-(z\circ y)\circ x).\label{anti-pre-Lie-pre-Lie-1}
\end{eqnarray}
Therefore $(A,\ast)$ is a pre-Lie algebra if and only if the right
hand side of Eq.~(\ref{anti-pre-Lie-pre-Lie-1}) is zero. This
completes the proof.
\end{proof}

\begin{rmk}
Note that when $q=-2$, Eq.~(\ref{anti-pre-Lie-pre}) holds if and
only if Eq.~(\ref{eq:defi:admissible Novikov algebras1}) holds,
that is, in this case, an anti-pre-Lie algebra satisfying
Eq.~(\ref{anti-pre-Lie-pre}) is exactly an admissible Novikov
algebra.
\end{rmk}


\begin{thm}\label{main-2}
Let $A$ be a vector space with a bilinear operation $\ast$.  Then
$(A,\ast)$ is a pre-Lie algebra satisfying
Eq.~\eqref{pre-Lie-anti} if and only if its $(-q)$-algebra
$(A,\circ)$ is an anti-pre-Lie algebra satisfying
Eq.~\eqref{anti-pre-Lie-pre}.
\end{thm}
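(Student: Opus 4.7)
The plan is to mirror the argument used for Theorem \ref{main-1}, running a ``round trip'' between pre-Lie and anti-pre-Lie structures through $q$- and $(-q)$-algebras, with Propositions \ref{pre-Lie-anti-pre-Lie} and \ref{anti-pre-Lie-pre-Lie} providing the individual legs of the trip. The first step I would take is a short bookkeeping computation that ensures iterating the construction returns the original operation up to a nonzero scalar: if $x\circ y=x\ast y-q\,y\ast x$ is the $(-q)$-algebra of $(A,\ast)$, then its $q$-algebra $\diamond$ is
\begin{equation*}
x\diamond y=x\circ y+q\,y\circ x=(x\ast y-q\,y\ast x)+q(y\ast x-q\,x\ast y)=(1-q^2)\,x\ast y.
\end{equation*}
Because $q\neq 0,\pm 1$, the scalar $1-q^2$ is nonzero, so rescaling preserves the pre-Lie identity; in particular $(A,\diamond)$ is pre-Lie if and only if $(A,\ast)$ is.

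For the forward direction, I would assume $(A,\ast)$ is pre-Lie and satisfies Eq.~\eqref{pre-Lie-anti}. Proposition \ref{pre-Lie-anti-pre-Lie} then immediately yields that $(A,\circ)$ is anti-pre-Lie. To extract Eq.~\eqref{anti-pre-Lie-pre} for $(A,\circ)$, I would apply Proposition \ref{anti-pre-Lie-pre-Lie} in its converse direction: the $q$-algebra of $(A,\circ)$ equals $(1-q^2)\ast$, which is pre-Lie by the scalar identity above, so the biconditional in Proposition \ref{anti-pre-Lie-pre-Lie} forces Eq.~\eqref{anti-pre-Lie-pre} on $(A,\circ)$.

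The backward direction is entirely analogous. Starting from an anti-pre-Lie algebra $(A,\circ)$ satisfying Eq.~\eqref{anti-pre-Lie-pre}, Proposition \ref{anti-pre-Lie-pre-Lie} produces a pre-Lie structure $(A,\diamond)=(1-q^2)(A,\ast)$, whence $(A,\ast)$ itself is pre-Lie; then Proposition \ref{pre-Lie-anti-pre-Lie} applied to this $(A,\ast)$ gives Eq.~\eqref{pre-Lie-anti}. There is no computational obstacle to speak of here, since the heavy lifting has already been done in Propositions \ref{pre-Lie-anti-pre-Lie} and \ref{anti-pre-Lie-pre-Lie}. The only point requiring a little care is orienting the two biconditionals correctly: each proposition has the form ``assuming one structure, the other holds iff a certain identity holds,'' and the argument uses each of them in both directions, which is exactly why the double-transform scalar identity is needed as the bridge.
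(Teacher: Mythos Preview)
Your proof is correct and follows essentially the same approach as the paper, which simply says ``It is similar to the one for Theorem~\ref{main-1}.'' Your explicit computation of the scalar $(1-q^2)$ in the round trip is a useful addition, since the paper's analogous observation (Remark~\ref{rem.3.2}) is stated only for the two-operation case.
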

\begin{proof}
It is similar to the one for Theorem \ref{main-1}.
\end{proof}

In particular, when $q=-2$, the following conclusion has already
been given in \cite{LB}.

\begin{cor} Let $A$ be a vector space with a bilinear operation $\ast$.  Then
$(A,\ast)$ is a Novikov algebra if and only if its $2$-algebra
$(A,\circ)$ is an admissible Novikov algebra.
\end{cor}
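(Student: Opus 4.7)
The plan is to deduce this corollary as a direct specialization of Theorem~\ref{main-2} at the value $q=-2$. With $q=-2$, the $(-q)$-algebra is precisely the $2$-algebra, so the correspondence set up there becomes a correspondence between pre-Lie algebras satisfying Eq.~\eqref{pre-Lie-anti} and anti-pre-Lie algebras satisfying Eq.~\eqref{anti-pre-Lie-pre}, linked via the $2$-algebra construction of Definition~\ref{algebra-q-algebra}. The only substantive check is that at $q=-2$ these two auxiliary identities collapse to the Novikov identity and the admissible Novikov identity respectively; both of these facts are already asserted in the two remarks immediately following Propositions~\ref{pre-Lie-anti-pre-Lie} and~\ref{anti-pre-Lie-pre-Lie}, so I would simply invoke them.

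Concretely, first I would substitute $q=-2$ into Eq.~\eqref{pre-Lie-anti}. The coefficients become $2+q=0$, $-q^2-2q=0$, and $q^2-q=6$, so the identity reduces to
\[
6\bigl((z\ast y)\ast x-(z\ast x)\ast y\bigr)=0,
\]
which is exactly the Novikov identity Eq.~\eqref{eq:defi:Novikov algebras1} after relabeling variables. Hence, among pre-Lie algebras, Eq.~\eqref{pre-Lie-anti} at $q=-2$ characterizes Novikov algebras.

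Next I would substitute $q=-2$ into Eq.~\eqref{anti-pre-Lie-pre}. Here $2+q=0$, $-q^2=-4$, and $q^2+q=2$, so the identity reduces to
\[
-4\,z\circ[x,y]_\circ+2\bigl((z\circ x)\circ y-(z\circ y)\circ x\bigr)=0,
\]
which rearranges to $(z\circ x)\circ y-(z\circ y)\circ x=2\,z\circ[x,y]_\circ$, i.e.\ exactly the admissible Novikov identity Eq.~\eqref{eq:defi:admissible Novikov algebras1}. Thus, among anti-pre-Lie algebras, Eq.~\eqref{anti-pre-Lie-pre} at $q=-2$ characterizes admissible Novikov algebras.

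With both characterizations in hand, the corollary follows immediately by applying Theorem~\ref{main-2} at $q=-2$: $(A,\ast)$ is a pre-Lie algebra satisfying Eq.~\eqref{pre-Lie-anti} (equivalently, a Novikov algebra) iff its $2$-algebra $(A,\circ)$ is an anti-pre-Lie algebra satisfying Eq.~\eqref{anti-pre-Lie-pre} (equivalently, an admissible Novikov algebra). There is no real obstacle; the only care needed is the sign and coefficient bookkeeping above, and even that is essentially recorded in the two preceding remarks.
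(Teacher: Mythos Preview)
Your proposal is correct and is essentially the paper's own argument: the corollary is presented as the $q=-2$ specialization of Theorem~\ref{main-2}, with the two remarks after Propositions~\ref{pre-Lie-anti-pre-Lie} and~\ref{anti-pre-Lie-pre-Lie} supplying exactly the coefficient reductions you wrote out. The paper does not even spell out a separate proof, simply noting that this case was already given in \cite{LB}.
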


\begin{cor}\label{den+Eq=pre+Eq} 
\begin{enumerate}
\item\label{it:11} Suppose that $(A,\succ,\prec)$ is a dendriform
algebra satisfying Eqs.~\eqref{eq:s1}-\eqref{dendri-anti-dendri}.
Then its associated pre-Lie algebra $(A, \ast)$ defined by
Eq.~(\ref{eq:pre}) satisfies Eq.~\eqref{pre-Lie-anti}.
 In particular, when $q=-2$, the associated pre-Lie
algebra of a Novikov-type dendriform algebra is a Novikov algebra.
\item \label{it:12} Suppose that
$(A,\vartriangleright,\vartriangleleft)$ is an anti-dendriform
algebra satisfying
Eqs.~\eqref{eq:anti-s1}-\eqref{anti-dendri-dendri}. Then its
associated anti-pre-Lie algebra $(A,\circ)$ satisfies
Eq.~\eqref{anti-pre-Lie-pre}.
 In particular, when $q=-2$, the associated anti-pre-Lie
algebra of an admissible Novikov-type dendriform algebra is an
admissible Novikov algebra.
\end{enumerate}
\end{cor}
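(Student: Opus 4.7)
The plan is to leverage the $q$-algebra dictionary established in Theorems~\ref{main-1} and~\ref{main-2} so as to reduce one of the two parts to the other, and then to carry out only one direct verification. First I would record the key bridging identity
\begin{equation*}
x\ast y - q\, y\ast x \;=\; x\vartriangleright y - y\vartriangleleft x,
\end{equation*}
which says that the $(-q)$-algebra of the associated pre-Lie algebra $(A,\ast)$ of a dendriform algebra $(A,\succ,\prec)$ coincides with the associated anti-pre-Lie algebra $(A,\circ)$ of the $q$-algebra $(A,\vartriangleright,\vartriangleleft)$ of $(A,\succ,\prec)$. This follows in a single line by substituting $x\vartriangleright y=x\succ y+qx\prec y$ and $x\vartriangleleft y=x\prec y+qx\succ y$ into the right-hand side and $x\ast y=x\succ y-y\prec x$ into the left-hand side.

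Given this identity, the two parts of the corollary are equivalent: by Theorem~\ref{main-1}, $(A,\succ,\prec)$ is a dendriform algebra satisfying Eqs.~(\ref{eq:s1})--(\ref{dendri-anti-dendri}) if and only if $(A,\vartriangleright,\vartriangleleft)$ is an anti-dendriform algebra satisfying Eqs.~(\ref{eq:anti-s1})--(\ref{anti-dendri-dendri}); by Theorem~\ref{main-2} applied to $(A,\ast)$ and its $(-q)$-algebra $(A,\circ)$, the pre-Lie identity plus Eq.~(\ref{pre-Lie-anti}) is equivalent to the anti-pre-Lie identity plus Eq.~(\ref{anti-pre-Lie-pre}). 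Hence it suffices to prove either Part~(\ref{it:11}) or Part~(\ref{it:12}) directly; I would choose Part~(\ref{it:11}).

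For the direct proof of Part~(\ref{it:11}), I would substitute $x\ast y=x\succ y-y\prec x$ into the left-hand side of Eq.~(\ref{pre-Lie-anti}) and expand. The result is a linear combination of the eight monomials in $x,y,z$ built from $\succ$ and $\prec$, over all permutations. Using the three dendriform axioms (\ref{dendriform algebra}) together with the additional identities (\ref{eq:s1}) and (\ref{eq:s2}), one can collapse each monomial to a preferred set (for instance those of the form $x\succ(y\prec z)$ and $(x\prec y)\prec z$, cf. the reduction in Proposition~\ref{pro:equiv}); then Eq.~(\ref{dendri-anti-dendri}) eliminates the remaining $(x\prec y)\prec z$-type terms, and the coefficients cancel to zero.

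The main obstacle is the bookkeeping: each of several monomials appears in up to four cyclic/transposed permutations of $x,y,z$, so roughly two dozen terms must be organized before the cancellation becomes visible. This can be streamlined by first rewriting $[x,y]\ast z$ and $z\ast[x,y]$ using only $\cdot$ (since the commutators of $\ast$ and $\cdot$ agree), and by handling the last summand $(z\ast y)\ast x-(z\ast x)\ast y$ with the pre-Lie identity for $\ast$, which reduces it to a form parallel to the first summand. The specialization $q=-2$ then makes the coefficients $2+q$ and $-q^{2}-2q$ vanish, so (\ref{pre-Lie-anti}) collapses to $(z\ast y)\ast x=(z\ast x)\ast y$, i.e.\ Eq.~(\ref{eq:defi:Novikov algebras1}); the admissible Novikov identity (\ref{eq:defi:admissible Novikov algebras1}) for the anti-pre-Lie side then falls out automatically from the equivalence in Theorem~\ref{main-2}.
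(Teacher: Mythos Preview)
Your bridging identity $x\ast y - q\,y\ast x = x\vartriangleright y - y\vartriangleleft x$ is exactly the paper's key step, and your use of Theorems~\ref{main-1} and~\ref{main-2} to make Parts~(\ref{it:11}) and~(\ref{it:12}) equivalent is correct. However, the subsequent plan to verify Part~(\ref{it:11}) by expanding Eq.~(\ref{pre-Lie-anti}) in the dendriform operations is an unnecessary detour: the machinery you have already invoked finishes the job without any expansion.

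The paper argues as follows. Since $(A,\succ,\prec)$ satisfies Eqs.~(\ref{eq:s1})--(\ref{dendri-anti-dendri}), Proposition~\ref{dendriform-anti-dendriform} makes the $q$-algebra $(A,\vartriangleright,\vartriangleleft)$ an anti-dendriform algebra; then Proposition~\ref{property-1} tells you that its associated algebra $(A,\circ)$, $x\circ y=x\vartriangleright y-y\vartriangleleft x$, is an anti-pre-Lie algebra. Your bridging identity now says exactly that this $(A,\circ)$ is the $(-q)$-algebra of the pre-Lie algebra $(A,\ast)$. But Proposition~\ref{pre-Lie-anti-pre-Lie} states that for a pre-Lie algebra $(A,\ast)$, its $(-q)$-algebra is anti-pre-Lie \emph{if and only if} Eq.~(\ref{pre-Lie-anti}) holds --- so Eq.~(\ref{pre-Lie-anti}) follows immediately, with no monomial bookkeeping needed. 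Part~(\ref{it:12}) is then handled symmetrically (or, as you observed, by the equivalence). In short, you spotted the right commutative square but then planned to reprove one of its edges by hand instead of reading it off from Proposition~\ref{pre-Lie-anti-pre-Lie}.
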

\begin{proof}
(\ref{it:11}). Note that the $q$-algebra of $(A,\succ,\prec)$ is
an anti-dendriform algebra
$(A,\vartriangleright,\vartriangleleft)$ by Proposition
\ref{dendriform-anti-dendriform}. Let $(A,\circ)$ be the
associated anti-pre-Lie algebra of
$(A,\vartriangleright,\vartriangleleft)$. Then we have
$$x\circ y=x\vartriangleright y-y\vartriangleleft x=x\succ y
+qx\prec y -y\prec x-qy\succ x=x\ast y-q y\ast x,\;\;\forall
x,y\in A,$$ that is, $(A,\circ)$ is the $-q$-algebra of $(A,
\ast)$. By Proposition \ref{pre-Lie-anti-pre-Lie},  $(A, \ast)$
satisfies Eq.~(\ref{pre-Lie-anti}). The conclusion for the special
case that $q=-2$ follows straightforwardly.

 (\ref{it:12}). It is similar to the proof of Item~(\ref{it:11}).
\end{proof}




Combining Theorems \ref{main-1}, \ref{main-2} and Corollary
\ref{den+Eq=pre+Eq} together, we have the following commutative
diagram which is consistent with both the
diagrams~(\ref{eq:dendri}) and~(\ref{diag:comm}).
 \begin{equation} \label{eq:rbdiag-1}
\begin{split}
{\tiny  \xymatrix{
     \txt{dendriform algebra \\ $(A,\succ,\prec)$}
    \ar@{->}[r]^-{x\ast y=x\succ y-y\prec x}
    & \txt{pre-Lie algebra\\$(A,\ast)$} \\
   \txt{dendriform algebra  \\$(A,\succ,\prec)$+Eqs.~(\ref{eq:s1})-(\ref{dendri-anti-dendri})}
    \ar@{_{(}->}[u] \ar@<1mm>@{->}[d]^-{q-\text{algebra}}
    \ar@{->}[r]^-{x\ast y=x\succ y-y\prec x} & \txt{\ \ \ pre-Lie algebra \\  $(A,\ast)$+Eq. \eqref{pre-Lie-anti}} \ar@{^{(}->}[u] \ar@<1mm>@{->}[d]^-{-q-\text{algebra}}\\
     \txt{anti-dendriform algebra\ \ \ \ \ \ \ \ \\ $(A,\vartriangleright,\vartriangleleft)$+Eqs.~(\ref{eq:anti-s1})-(\ref{anti-dendri-dendri})}
     \ar@<1mm>@{->}[u]^-{-q-\text{algebra}}
    \ar@{->}[r]^-{x\circ y=x\vartriangleright y-y\vartriangleleft x}
      \ar@{^{(}->}[d]^-{} & \txt{\ \ \ \ \ \ \ \ anti-pre-Lie algebra\\ $(A,\circ)$+Eq. \eqref{anti-pre-Lie-pre}} \ar@<1mm>@{->}[u]^-{q-\text{algebra}} \ar@{_{(}->}[d]^-{}\\
      \txt{anti-dendriform algebra \\ $(A,\vartriangleright,\vartriangleleft)$}
    \ar@{->}[r]^-{x\circ y=x\vartriangleright y-y\vartriangleleft x}& \txt{anti-pre-Lie algebra\\ $(A,\circ)$}
}}
\end{split}
\end{equation}


In particular, when $q=-2,$ we have the following commutative
diagram:
 \begin{equation} \label{eq:rbdiag-2}
\begin{split}
{\tiny  \xymatrix{
     \txt{dendriform algebra \\ $(A,\succ,\prec)$}
    \ar@{->}[r]^-{x\ast y=x\succ y-y\prec x}
    & \txt{pre-Lie algebra\\$(A,\ast)$} \\
   \txt{Novikov-type dendriform algebra \\  $(A,\succ,\prec)$}
    \ar@{_{(}->}[u] \ar@<1mm>@{->}[d]^-{-2-\text{algebra}}
    \ar@{->}[r]^-{x\ast y=x\succ y-y\prec x} & \txt{Novikov algebra \\  $(A,\ast)$} \ar@{^{(}->}[u] \ar@<1mm>@{->}[d]^-{2-\text{algebra}}\\
     \txt{admissible Novikov-type dendriform algebra \ \ \ \ \ \ \\ $(A,\vartriangleright,\vartriangleleft)$} \ar@<1mm>@{->}[u]^-{2-\text{algebra}}
    \ar@{->}[r]^-{x\circ y=x\vartriangleright y-y\vartriangleleft x}
      \ar@{^{(}->}[d]^-{} & \txt{\ \ \ \ \ \ admissible Novikov algebra\\ $(A,\circ)$} \ar@<1mm>@{->}[u]^-{-2-\text{algebra}} \ar@{_{(}->}[d]^-{}\\
      \txt{anti-dendriform algebra \\ $(A,\vartriangleright,\vartriangleleft)$}
    \ar@{->}[r]^-{x\circ y=x\vartriangleright y-y\vartriangleleft x}& \txt{anti-pre-Lie algebra\\ $(A,\circ)$}
}}
\end{split}
\end{equation}

The above commutative diagram illustrates that it is reasonable to
regard Novikov-type dendriform and admissible Novikov-type
dendriform algebras  as ``analogues" of Novikov and
admissible Novikov algebras for dendriform and anti-dendriform
algebras respectively, justifying the notions of the former.

\section{General framework: analogues of anti-dendriform algebras and a new splitting of operations }

Illustrated by the study of anti-dendriform algebras in the
previous sections, we provide a general framework of introducing
the notions of analogues of anti-dendriform algebras to interpret
a new approach of splitting operations. We also characterize such
a construction in terms of double spaces.


We commence to use associative algebras as an example to exhibit the new
approach of splitting  operations, which is interpreted by a
general framework of introducing the notions of analogues of
anti-dendriform algebras. At first, we consider ``splitting the associativity", that is, expressing
the multiplication of an associative algebra as the sum of a
string of bilinear operations. Explicitly, let $(A,\cdot)$ be an
associative algebra and $(\cdot_i)_{1\leq i\leq N}:A\otimes
A\rightarrow A$ be a family of bilinear operations on $A$. Then
the operation $\cdot$ splits into the $N$ operations
$\cdot_1,\cdots, \cdot_N$ if
\begin{equation} x\cdot y=\sum_{i=1}^N x\cdot_iy,\;\;\forall x,y\in A.\end{equation}

\begin{ex} The ordinary operations splitting the associativity give
the following so-called {\bf Loday algebras}.
\begin{enumerate}
\item $N=2$: dendriform algebra (\cite{Lo}); \item $N=3$:
tridendriform algebra (\cite{LR}); \item $N=4$: quadri-algebra
(\cite{AL}); \item $N=8$: octo-algebra (\cite{Le3}); \item $N=9$:
ennea-algebra (\cite{Le1}).
\end{enumerate}
\end{ex}

For the case that $N=2^n$, $n=0,1,2,\cdots$, there is the
following ``rule" of constructing Loday algebras: by induction,
for the algebra $(A, \cdot_i)_{1\leq i\leq 2^n}$, besides the
natural (regular) representation of $A$ on
 the underlying vector space of
$A$ itself given by the left and right multiplication operators,
one can introduce the $2^{n+1}$ operations $\{\cdot_{i_1},
\cdot_{i_2}\}_{1\leq i\leq 2^n}$ such that
\begin{equation}x\cdot_i y=x\cdot_{i_1} y+x\cdot_{i_2}y,\;\;\forall x,y\in A, \; 1\leq i\leq
2^n,\end{equation} and their left and right multiplication
operators give a representation of $(A, \cdot_i)_{1\leq i\leq
2^n}$ by acting on the underlying vector space of $A$ itself. In
the sense of \cite{BBGN}, these Loday algebras are the successors'
algebras starting from the associative algebras.

Now we consider to construct analogues of anti-dendriform algebras
by the following ``rule" as another approach of splitting the
associativity. Let $N=2^n$, $n=0,1,2,\cdots$. By induction, for
the algebra $(A, \cdot_i)_{1\leq i\leq 2^n}$, 
one can introduce the $2^{n+1}$ operations $\{\cdot_{i_1},
\cdot_{i_2}\}_{1\leq i\leq 2^n}$ such that
\begin{equation}x\cdot_i y=x\cdot_{i_1} y+x\cdot_{i_2}y,\;\;\forall x,y\in A, \; 1\leq i\leq
2^n,\end{equation} and their negative left and right
multiplication operators give a representation of $(A,
\cdot_i)_{1\leq i\leq 2^n}$ by acting on the underlying vector
space of $A$ itself. Hence these algebras can be regarded as the
``anti-structures" for  the successors' algebras starting from the
associative algebras.




\begin{ex}
When $N=2$, that is, $n=1$, the corresponding algebra
$(A,\cdot_i)_{1\leq i\leq 2}=(A,\cdot_1,\cdot_2)$ is an
anti-dendriform algebra.
\end{ex}

Similarly, we consider the following approach of splitting the Lie
bracket of a Lie algebra in which anti-pre-Lie algebras are
included.

Let $(X,[\ ,\ ])$ be a Lie algebra and $(\cdot_i)_{1\leq i\leq
N}:X\otimes X\rightarrow X$ be a family of bilinear operations on
$X$. Then the Lie bracket $[\ ,\ ]$ splits into the commutator of
$N$ bilinear operations $\cdot_1,\cdots, \cdot_N$ if
\begin{equation}
[x,y]=\sum_{i=1}^N (x\cdot_iy-y\cdot_ix),\;\;\forall x,y\in
X.\label{eq:commu}
\end{equation}


For the case that $N=2^n$, $n=0,1,2,\cdots$, there is a ``rule''
of constructing the bilinear operations $\cdot_i$ as follows. 
By induction, for the algebra $(X,
\cdot_i)_{1\leq i\leq 2^n}$, 
one can introduce the $2^{n+1}$ bilinear operations
$\{\cdot_{i_1}, \cdot_{i_2}\}_{1\leq i\leq 2^n}$ such that
\begin{equation}
x\cdot_i y=x\cdot_{i_1} y-y\cdot_{i_2}x,\;\;\forall x,y\in A, \;
1\leq i\leq 2^n, \label{eq:rule}\end{equation} and their negative
left or right multiplication operators give a representation of
$(X, \cdot_i)_{1\leq i\leq 2^n}$ by acting on the underlying
vector space of $A$ itself. These algebras can be regarded as the
``anti-structures" for  the successors' algebras starting from the
Lie algebras.

\begin{ex}
 When $N=1$, that is, $n=0$,  the corresponding algebra $(X,\cdot_1)$
is an anti-pre-Lie algebra.
\end{ex}

In a summary, such ``anti-structures" as the ``counterparts" of
the  successors' algebras, which are put into the above general
framework as analogues of anti-dendriform algebras as well as
anti-pre-Lie algebras, provide a new splitting of operations. The
study on these structures such as the relationships with
anti-$\mathcal O$-operators and anti-Rota-Baxter operators,
 the correspondences between the subclasses of successors'
algebras and their anti-counterparts in terms of $q$-algebras, and
the operadic interpretation is expected in the future works.

At the end of this section, we give the following characterization
of these ``anti-structures" 
in terms of double spaces, motivated by
Corollary~\ref{cor:anti-d}.

Let $\mathcal C$ denote the category of all algebras $(A,\cdot)$
which satisfy a given set of multilinear relations $\mathcal
R_1=0$, $\cdots$, $\mathcal R_k=0$.

\begin{defi}
An algebra $(A,\vartriangleright,\vartriangleleft)$ is called a
{\bf $\mathcal C$-anti-dendriform algebra} if $(A\oplus A,
\cdot)\in\mathcal C$, where $\cdot$ is defined by
Eq.~(\ref{eq:asso-double}).
\end{defi}

Similarly, one can characterize the anti-structures for the
algebras $(A,\cdot_i)_{1\leq i\leq N}$ with $N=2^n$,
$n=0,1,2,\cdots$ as follows. By induction, for the algebra $(A,
\cdot_i)_{1\leq i\leq 2^n}$ giving the category $\mathcal
C_{2^n}$, one can introduce the $2^{n+1}$ operations
$\{\cdot_{i_1}, \cdot_{i_2}\}_{1\leq i\leq 2^n}$ such that
$(A\oplus A, \cdot_1,\cdots,\cdot_{2^n})\in \mathcal C_{2^n}$,
where $\cdot_i$ ($1\leq i\leq 2^n$) is defined by

\begin{equation}
\label{eq:asso-double11} (x,a)\cdot_i (y,b)=(x\cdot_{i1}
y+x\cdot_{i2} y, -x\cdot_{i1} b-a\cdot_{i2} y),\;\;\forall
x,y,a,b\in A.
\end{equation}

 \bigskip


\bigskip

{\bf Acknowledgments } This work is partially supported by NSFC
(11931009, 12271265), the Fundamental Research Funds for the
Central Universities and Nankai Zhide Foundation.


\begin{thebibliography}{99}


\bibitem{AL} M. Augiar and  J.-L. Loday, Quadri-algebras, J. Pure
Appl. Algebra 191 (2004) 205-221.



\bibitem{B} C. Bai, Double constructions of Frobenius algebras, Connes cocycles and their duality, J. Noncommut. Geom. 4 (2010) 475-530.

\bibitem{Bai2} C. Bai, An introduction to pre-Lie algebras, in: Algebra and Applications 1: Nonssociative Algebras and Categories, Wiley Online Library (2021) 245-273.


\bibitem{BBGN} C. Bai, O. Bellier, L. Guo and X. Ni, Splitting of operations, Manin products and Rota-Baxter operators, Int. Math. Res. Not. IMRN. (2013) 485-524.


\bibitem{BGN} C. Bai, L. Guo and X. Ni, $\mathcal{O}$-operators on associative algebras and associative Yang-Baxter equations, Pacific J. Math. 256 (2012) 257-289.

\bibitem{BGN2}  C. Bai, L. Guo and X. Ni, Relative Rota-Baxter operators and tridendriform algebras, J. Algebra Appl.
12 (2013) 1350027.

\bibitem{BM} C. Bai and D. Meng, The classification of left-symmetric algebras in dimension 2, Chin. Sci. Bull. 41 (1996)  2207 [in Chinese].






\bibitem{Bal}  A.A. Balinskii and S.P. Novikov, Poisson brackets of hydrodynamic type, Frobenius algebras and Lie
algebras, Soviet Math. Dokl. 32 (1985) 228-231.

\bibitem{Bu1}D. Burde, Simple left-symmetric algebras with solvable Lie algebra, Manuscripta Math. 95 (1998) 397-411.

\bibitem{Bu} D. Burde, Left-symmetric algebras and pre-Lie algebras in
geometry and physics, Cent. Eur. J. Math. 4 (2006) 323-357.

\bibitem{Cha1} F. Chapoton, Un endofoncteur de la
cat\'egorie des op\'erades. Dialgebras and related operads, 105-110,
Lecture Notes in Math. 1763, Springer, Berlin, 2001.



\bibitem{Cha} F. Chapoton, Un th\'eor\`eme de Cartier-Milnor-Moore-Quillen pour les big\`ebres dendriformes et les alg\`ebres braces, J. Pure Appl. Algebra 168 (2002) 1-18.

\bibitem{C} A. Connes, Noncommutative differential geometry, Inst. Hautes \'Etudes Sci. Publ. Math. 62 (1985) 257-360.

\bibitem{DZ} A. Dzhumadil'daev and P. Zusmanovich, Commutative 2-cocycles on Lie algebras, J. Algebra 324 (2010) 732-748.

\bibitem{Fo} L. Foissy, Les alg\`ebres de Hopf des arbres
enracin\'es d\'ecor\'es II, Bull. Sci. Math. 126 (2002) 249--288.


\bibitem{Fra} A. Frabetti, Dialgebra homology of associative algebras, C. R. Acad. Sci. Paris S\'er. I Math. 325 (1997) 135-140.



\bibitem{Fra1} A. Frabetti, Leibniz homology of dialgebras of matrices, J. Pure Appl. Algebra 129 (1998) 123-141.

\bibitem{Gel} I.M. Gel'fand and I.Ya. Dorfman, Hamiltonian operators and algebraic structures related to them, Funct. Anal. Appl. 13 (1979) 248-262.

\bibitem{GP} V. Gubarev and P. Kolesnikov, Embedding of dendriform algebras into Rota-Baxter algebras, Cent. Eur. J. Math. 11 (2013) 226-245.



\bibitem{Ho} R. Holtkamp, Comparison of Hopf algebras on trees, Arch. Math. (Basel) 80 (2003) 368-383.

\bibitem{Ho1} R. Holtkamp, On Hopf algebra structures over free operads, Adv. Math. 207 (2006) 544-565.


\bibitem{Le1} P. Leroux, Ennea-algebras, J. Algebra 281 (2004) 287-302.

\bibitem{Le3} P. Leroux, On some remarkable operads constructed from Baxter operators, arXiv: math.QA/0311214.

\bibitem{LB}  G. Liu and C. Bai, Anti-pre-Lie algebras, Novikov algebras and commutative 2-cocycles on Lie algebras, J. Algebra 609 (2022) 337-379.

\bibitem{Lo} J.-L. Loday, Dialgebras, in: Dialgebras and related operads, Lecture Notes in Math. 1763, Springer, Berlin (2001) 7-66.

\bibitem{Lo1} J.-L. Loday, Arithmetree, J. Algebra 258 (2002) 275-309.

\bibitem{Lo2} J.-L. Loday, Scindement d'associativit\'e et alg\`ebres de Hopf, Actes des
Journ\'ees Math\'ematiques \`a la M\'emoire de Jean Leray,
S\'emin. Congr. 9, Soc. Math. France, Paris (2004) 155-172.

\bibitem{LR2}  J.-L. Loday and M. Ronco, Hopf algebra of the planar binary trees, Adv. Math. 139 (1998) 293-309.

\bibitem{LR1}  J.-L. Loday and M. Ronco,  Order structure on the algebra of permutations and of planar binary trees, J. Algebraic Combin. 15 (2002) 253-270.

\bibitem{LR} J.-L. Loday and M. Ronco, Trialgebras and families of
polytopes, in: Homotopy Theory: Relations with Algebraic
Geometry, Group Cohomology, and Algebraic K-theory, Comtep.
Math. {346} (2004) 369-398.

\bibitem{Ro} M. Ronco, Eulerian idempotents and Milnor-Moore theorem for certain non-cocommutative Hopf algebras, J. Algebra 254 (2002) 152-172.

\bibitem{U} K. Uchino, Quantum analogy of Poisson geometry, related dendriform algebras and Rota-Baxter operators, Lett. Math. Phys. 85 (2008) 91-109.













\end{thebibliography}
\end{document}